\title{Hausdorff hyperspaces of $\Rm$ and their dense subspaces}
\author[W. Kubi\'s]{Wies{\l}aw Kubi\'s}
 \address[W. Kubi\'s]{Instytut Matematyki,
 Akademia \'Swi\c etokrzyska,
 25-406 Kielce, Poland}
 \email{wkubis@pu.kielce.pl}
\author[K. Sakai]{Katsuro Sakai}
 \address[K. Sakai]{Institute of Mathematics,
 University of Tsukuba, Tsukuba, 305-8571, Japan}
 \email{sakaiktr@sakura.cc.tsukuba.ac.jp}
\subjclass{54B20, 57N20}
\keywords{The hyperspace, the Hausdorff metric,
 bounded closed sets, nowhere dense closed sets, perfect sets, Cantor sets,
 Lebesgue measure zero, Euclidean space, N{\"o}beling space,
 the Hilbert cube, the pseudo-interior, Hilbert space}
\thanks{This research was supported by Grant-in-Aid for Scientific Reserch
 (No.\ 17540061), Japan Society for the Promotion of Science}
\newtheorem{tw}{Theorem}[section]
\newtheorem{wn}[tw]{Corollary}
\newtheorem{lm}[tw]{Lemma}
\newtheorem{prop}[tw]{Proposition}
\newtheorem{fact}[tw]{Fact}
\theoremstyle{definition}
\theoremstyle{remark}
\newtheorem{question}{Question}
\newenvironment{alphenume}{%
 \begin{enumerate}%
 }{\end{enumerate}}
\newenvironment{romanenume}{%
 \begin{enumerate}%
 }{\end{enumerate}}
\newcommand{\eps}{\varepsilon}
\renewcommand{\phi}{\varphi}
\renewcommand{\rho}{\varrho}
\newcommand{\Cee}{\mathcal{C}}
\newcommand{\Dee}{\mathcal{D}}
\newcommand{\Ef}{\mathcal{F}}
\newcommand{\Ha}{\mathcal H}
\newcommand{\Yu}{\mathcal{U}}
\newcommand{\Vee}{\mathcal{V}}
\newcommand{\Emm}{\mathfrak{M}}
\newcommand{\En}{\mathfrak N}
\newcommand{\Qyu}{\mathbb{Q}}
\newcommand{\rest}{\restriction}
\newcommand{\ntr}{n\in\omega}
\newcommand{\loe}{\leqslant}
\newcommand{\goe}{\geqslant}
\newcommand{\subs}{\subseteq}
\newcommand{\sups}{\supseteq}
\newcommand{\nnempty}{\ne\emptyset}
\newcommand{\ovr}{\overline}
\newcommand{\til}{\tilde}
\newcommand{\cl}{\operatorname{cl}}
\newcommand{\bd}{\operatorname{bd}}
\newcommand{\diam}{\operatorname{diam}}
\newcommand{\dist}{\operatorname{dist}}
\newcommand{\card}{\operatorname{card}}
\newcommand{\id}{\operatorname{id}}
\newcommand{\pr}{\operatorname{pr}}
\newcommand{\topiso}{\approx}
\newcommand{\setof}[2]{\{#1\colon #2\}}
\providecommand{\ciag}[1]{\ensuremath{\setof{{#1}_n}{\ntr}}}
\newcommand{\sn}[1]{\{#1\}} 
\newcommand{\pair}[2]{\langle #1, #2 \rangle} 
\newcommand{\seq}[1]{\langle #1 \rangle} 
\newcommand{\map}[3]{#1\colon #2 \to #3} 
\newcommand{\img}[2]{#1[#2]} 
\newcommand{\fin}[1]{[#1]^{<\omega}}
\providecommand{\nat}{\omega}
\newcommand{\R}{\ensuremath{\mathbb R}}
\newcommand{\Rm}{\ensuremath{\mathbb R^m}}
\newcommand{\bal}{\operatorname{B}}
\newcommand{\clbal}{\overline{\bal}}
\newcommand{\norm}[1]{\|#1\|}
\newcommand{\cld}{\operatorname{Cld}}
\newcommand{\bcl}{\operatorname{Bd}}
\newcommand{\nwd}{\operatorname{Nwd}}
\newcommand{\uclb}{\operatorname{UNb}}
\newcommand{\reg}{\operatorname{Reg}}
\newcommand{\perf}{\operatorname{Perf}}
\newcommand{\Cantor}{\operatorname{Cantor}}
\newcommand{\poly}{\operatorname{Pol}}
\newcommand{\Fin}{\operatorname{Fin}}
\newcommand{\effros}{\mathfrak E}
\newcommand{\borel}{\mathfrak B}
\newcommand{\Z}{\ensuremath{\operatorname{Z}}} 
\newcommand{\Zsig}{\ensuremath{\Z_\sigma}} 
\newcommand{\Q}{\ensuremath{\operatorname{Q}}} 
\newcommand{\Qmp}{\ensuremath{\Q\setminus 0}} 
\newcommand{\pseudobd}{\ensuremath{\Sigma}} 
\newcommand{\pseudoint}{\ensuremath{\operatorname{s}}} 
\newcommand{\pseudointmp}{\ensuremath{\pseudoint\setminus0}} 
\begin{document}

\begin{abstract}
Let $\bcl_H(\Rm)$ be the hyperspace of nonempty bounded closed subsets
 of Euclidean space $\Rm$ endowed with the Hausdorff metric.
It is well known that $\bcl_H(\Rm)$ is homeomorphic to
 the Hilbert cube minus a point.
We prove that natural dense subspaces of $\bcl_H(\Rm)$
 of all nowhere dense closed sets, of all perfect sets, of all Cantor sets
 and of all Lebesgue measure zero sets are homeomorphic to
 the Hilbert space $\ell_2$.
For each $0 \loe 1 < m$, let
$$\nu^m_k
 = \{x = (x_i)_{i=1}^m \in \Rm : x_i \in \R\setminus\Qyu
 \text{ except for at most $k$ many $i$}\},$$
where $\nu^{2k+1}_k$ is the $k$-dimensional N{\"o}beling space
 and $\nu^m_0 = (\R\setminus\Qyu)^m$.
It is also proved that the spaces $\bcl_H(\nu^1_0)$
 and $\bcl_H(\nu^m_k)$, $0\loe k<m-1$, are homeomorphic to $\ell_2$.
Moreover, we investigate the hyperspace $\cld_H(\R)$
 of all nonempty closed subsets of the real line $\R$
 with the Hausdorff (infinite-valued) metric.
It is shown that a nonseparable component $\Ha$ of $\cld_H(\R)$ is homeomorphic
 to the Hilbert space $\ell_2(2^{\aleph_0})$ of weight $2^{\aleph_0}$
 in case where $\Ha \not\ni \R, [0,\infty), (-\infty,0]$.
\end{abstract}

\maketitle

\section*{Introduction}

In this paper,
 we consider metric spaces and their hyperspaces
 endowed with the Hausdorff metric.
Specifically, given a metric space $X = \pair Xd$,
 we shall denote by $\cld(X)$ and $\bcl(X)$ the hyperspaces
 consisting of all nonempty closed sets and
 of all nonempty bounded closed sets in $X$ respectively
 and by $d_H$ the Hausdorff metric,
 which is infinite-valued on $\cld(X)$ if $X$ is unbounded.
We shall sometimes write $\cld_H(X)$ or $\bcl_H(X)$ to emphasize
 the fact that we consider this space with the Hausdorff metric topology.

A theorem of Antosiewicz and Cellina \cite{AnCe} states that,
 given a convex set $X$ in a normed linear space,
 every continuous multivalued map $\map \phi{Y}{\bcl_H(X)}$
 from a closed subset $Y$ of a metric space $Z$,
 can be extended to a continuous map $\map{\ovr f}Z{\bcl_H(X)}$.
Using the language of topology,
 this theorem says that, under the above assumptions,
 $\bcl_H(X)$ is an absolute extensor or an absolute retract
 (in the class of metric spaces).
In \cite{CK}, it is proved that
 the above result is still valid when $X$ is replaced
 by a dense subset of a convex set in a normed linear space.
More generally, $\bcl_H(X)$ is an absolute retract,
 whenever the metric on $X$ is {\em almost convex}
 (see \S\ref{whfijfpiapfi} for the definition).
This condition was further weakened in \cite{KuSaY},
 which has turned out to be actually a necessary and sufficient one
 by Banakh and Voytsitskyy \cite{BaVo}.
In the last paper, several equivalent conditions are given,
 which are too technical to mention them here.
We refer to \cite{BaVo} for the details.

It is a natural question whether $\bcl_H(X)$ and some of its natural subspaces
 are homeomorphic to some standard spaces, like the Hilbert cube/space, etc.
Since the Hausdorff metric topology coincides with the Vietoris topology
 on the hyperspace $\exp(X)$ of nonempty compact sets,
 the above question was already answered, applying known results,
 in case where bounded closed sets in $X$ are compact.
Among the known results,
 let us mention the theorem of Curtis and Schori \cite{CuScho}
 (cf.\ \cite[Chapter 8]{vMill}),
 saying that $\exp(X)$ is homeomorphic to ($\topiso$)
 the Hilbert cube $\Q = [-1,1]^\omega$ if and only if $X$ is a Peano continuum,
 that is, it is compact, connected and locally connected.
Later, Curtis \cite{Curtis} characterized non-compact metric spaces $X$
 for which $\exp(X)$ is homeomorphic to
 the Hilbert cube minus a point $\Qmp$ ($= \Q\setminus\{0\}$)
 or the pseudo-interior $\pseudoint = (-1,1)^\omega$ of $\Q$.\footnote
     {It is well known that $\pseudoint$ is homeomorphic to
     the separable Hilbert space $\ell_2$.}
In particular, $\bcl_H(\Rm) = \exp(\Rm) \topiso \Qmp$.
For more information concerning Vietoris hyperspaces,
 we refer to the book \cite{IlNa}.

The aim of this work is to study topological types
 of some of the natural subspces of the Hausdorff hyperspace.
We consider the following subspaces of $\bcl_H(X)$:
\begin{itemize}
\item
 $\nwd(X)$ --- all nowhere dense closed sets;
\item
 $\perf(X)$ --- all perfect sets;\footnote
     {I.e., completely metrizable closed sets which are dense in itself.}
\item
 $\Cantor(X)$ --- all compact sets homeomorphic to the Cantor set.
\end{itemize}
In case $X = \Rm$ with the standard metric, we can also consider the following subspace:
\begin{itemize}
\item
 $\En(\Rm)$ --- all closed sets of the Lebesgue measure zero.
\end{itemize}
We show that, in case $X=\Rm$,
 the above spaces are homeomorphic to the separable Hilbert space $\ell_2$.
Actually, we prove that if $\Ef$ is one of the above spaces
 then the pair $\pair{\bcl_H(\Rm)}{\Ef}$ is homeomorphic to
 $\pair\Qmp\pseudointmp$.

The completion of a metric space $X = \pair Xd$
 is denoted by $\pair{\til X}d$.
Then $\bcl_H(X,d)$ can be identified with the subspace of $\bcl_H(\til X,d)$,
 via the isometric embedding $A\mapsto \cl_{\til X}A$.
Thus we shall often write $\bcl(X,d)\subs \bcl(\til X,d)$,
 having in mind this identification.
In this case, $\bcl(\til X,d)$ is the completion of $\bcl(X,d)$.
By such a reason, we also consider a dense subspace $D$
 of a metric space $X = \pair Xd$.
For each $0 \loe k < m$, let
$$\nu^m_k
 = \{x = (x_i)_{i=1}^m \in \Rm : x_i \in \R\setminus\Qyu
 \text{ except for at most $k$ many $i$}\},$$
 which is the universal space for completely metrizable subspaces
 in $\Rm$ of $\dim \loe k$.
In case $2k + 1 < m$,
 $\nu^m_k$ is homeomorphic to the $k$-dimensional N{\"o}beling space
 $\nu^{2k+1}_k$,
 which is the universal space for all separable completely metrizable spaces.
Note that $\nu^m_0 = (\R\setminus\Qyu)^m\topiso\R\setminus\Qyu$.
We show that the pairs $\pair{\bcl(\R)}{\bcl(\R\setminus\Qyu)}$
 and $\pair{\bcl(\Rm)}{\bcl(\nu^m_k)}$, $0 \loe k < m-1$,
 are homeomorphic to $\pair\Qmp\pseudointmp$,
 so we have $\bcl_H(\nu^m_k) \topiso \ell_2$
 if $\pair mk = \pair 10$ or $0 \loe k < m-1$.

We also study the space $\cld_H(\R)$. It is very different from the hyperspace $\exp(\R)$. It is not hard to see that $\cld_H(\R)$ has $2^{\aleph_0}$ many components, $\bcl(\R)$ is the only separable one and any other component has weight $2^{\aleph_0}$. We show that a nonseparable component $\Ha$ of $\cld_H(\R)$ is homeomorphic to the Hilbert space $\ell_2(2^{\aleph_0})$ of weight $2^{\aleph_0}$ in case where $\Ha \not\ni \R, [0,\infty), (-\infty,0]$.
This is a partial answer (in case $n = 1$) of Problem 4 in \cite{KuSaY}.

\section{Preliminaries}

We use standard notation concerning sets and topology. For example, we denote by $\omega$ the set of all natural numbers. Given a set $X$, we denote by $[X]^{<\omega}$ the family of all finite subsets of $X$.

Given a metric space $X = \pair Xd$ and a set $A\subs X$, we denote by $\bal(A,r)$ and $\clbal(A,r)$ the open and the closed $r$-balls centered at $A$, that is,
\begin{gather*}
\bal(A,r)=\setof{x\in X}{\dist(x,A)<r} \quad\text{and}\\
 \clbal(A,r)=\setof{x\in X}{\dist(x,A)\loe r}.
\end{gather*}
The Hausdorff metric $d_H$ on $\cld(X)$ is defined as follows:
$$d_H(A,C) = \inf \setof{r>0}{A\subs\bal(C,r)\text{ and }C\subs\bal(A,r)},$$
where $d_H$ is actually a metric on $\bcl(X)$ but $d_H$ is infinite-valued for $\cld(X)$ if $\pair Xd$ is unbounded. The spaces $\cld_H(X)$ and $\bcl_H(X)$ are sometimes denoted by $\cld_H(X,d)$ and $\bcl_H(X,d)$, to emphasize the fact that they are determined by the metric on $X$. In fact, the metric $\rho(x,y) = d(x,y)/(1 + d(x,y))$ induces the same topology on $X$ as $d$ but the Hausdorff metric $\rho_H$ induces a different one on $\cld(X)$. On the other hand, the Hausdorff metric induced by the metric $\bar d(x,y) = \min\{d(x,y), 1\}$ is finite-valued and induces the same topology on $\cld_H(X)$ as $d_H$; moreover $\cld(X)$ is equal to $\bcl(X)$ as sets. Note that the subspace $\Fin(X)=\fin X\setminus\sn\emptyset$ of $\bcl_H(X)$ of all nonempty finite subsets of $X$ is dense in $\bcl_H(X)$ if and only if every bounded set in $X = \pair Xd$ is totally bounded.

\begin{fact}\label{complete-separable}
For a metric space $X = \pair Xd$, the following hold:
\begin{romanenume}
\item
If $d$ is complete then $\pair{\bcl(X,d)}{d_H}$ is a complete metric space
 and the space $\cld_H(X)$ is completely metrizable.
\item
The space $\bcl_H(X,d)$ is separable
 if and only if every bounded set in $X$ is totally bounded.
\end{romanenume}
\end{fact}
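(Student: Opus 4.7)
The plan is to address (i) and (ii) separately by standard but distinct techniques. For (i), I would first show completeness of $\bcl_H(X,d)$ by an explicit construction of limits. Given a $d_H$-Cauchy sequence $(A_n)_{\ntr}$, after passing to a subsequence with $d_H(A_n, A_{n+1}) < 2^{-n}$, I define the candidate limit as $A = \setof{x \in X}{\lim_n \dist(x, A_n) = 0}$. Completeness of $d$ makes $A$ nonempty: starting from any $x_0 \in A_0$, inductively pick $x_{n+1} \in A_{n+1}$ with $d(x_n, x_{n+1}) < 2^{-n}$, obtaining a Cauchy sequence whose limit lies in $A$. The Cauchy condition yields boundedness of $A$ (the diameters of the $A_n$ stay bounded) and the convergence $d_H(A_n, A) \to 0$; closedness of $A$ is automatic from the defining condition. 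For $\cld_H(X)$, recall from the preliminaries that $\bar d = \min\{d, 1\}$ is complete whenever $d$ is and yields the same topology on $\cld_H$, while $\cld(X) = \bcl(X, \bar d)$ as sets. Hence the first part applied to $(X, \bar d)$ delivers complete metrizability of $\cld_H(X)$.

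For (ii), sufficiency is straightforward. If every bounded set in $X$ is totally bounded, then $X = \bigcup_{\ntr} \clbal(x_0, n)$ is a countable union of (totally bounded, hence separable) sets, so $X$ itself is separable; fix a countable dense $D \subs X$. The family $\fin D \setminus \{\emptyset\}$ is countable, and any $A \in \bcl(X)$ is bounded hence totally bounded, so for each $\eps > 0$ it has a finite $(\eps/2)$-net whose points can be replaced by sufficiently close elements of $D$, giving some $F \in \fin D$ with $d_H(A,F) < \eps$. Hence $\bcl_H(X,d)$ is separable.

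For necessity, I argue by contrapositive. If some bounded $B \subs X$ is not totally bounded, pick $\eps > 0$ and a sequence $(x_n)_{\ntr}$ in $B$ with pairwise distances at least $\eps$. Since $\setof{x_n}{\ntr}$ is $\eps$-discrete in $X$, every subset of it is closed in $X$ and bounded as a subset of $B$, so every nonempty subset represents a point of $\bcl_H(X,d)$. For two distinct nonempty subsets $S, T \subs \setof{x_n}{\ntr}$, any $x_k \in S \triangle T$ satisfies $\dist(x_k, S) \goe \eps$ or $\dist(x_k, T) \goe \eps$, giving $d_H(S, T) \goe \eps$. The $2^{\aleph_0}$-many such subsets thus form an uncountable $\eps$-discrete family in $\bcl_H(X,d)$, contradicting separability. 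The only mildly delicate point is the limit construction in (i); the rest is routine.
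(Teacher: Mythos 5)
Your proof is correct. There is nothing in the paper to compare it against: Fact \ref{complete-separable} is stated as standard background and no proof is given, so your write-up simply supplies the classical argument. For (i), the candidate limit $A=\setof{x\in X}{\lim_n\dist(x,A_n)=0}$ together with the chain construction $d(x_n,x_{n+1})<2^{-n}$ is the textbook proof of completeness of the Hausdorff metric, and the reduction of $\cld_H(X)$ to $\bcl(X,\bar d)$ with $\bar d=\min\{d,1\}$ is exactly the identification already recorded in the paper's preliminaries (that $\bar d$ is complete when $d$ is, is a one-line check you may as well state). For (ii), the density of finite subsets of a countable dense set under total boundedness of bounded sets, and the continuum-sized $\eps$-discrete family formed by the nonempty subsets of an $\eps$-separated sequence, are both correct and standard. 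The only point where you compress slightly is the claim that $d_H(A_n,A)\to 0$ ``follows from the Cauchy condition'': the inclusion $A\subs\bal(A_n,\eps)$ for large $n$ is easy, but the reverse inclusion $A_n\subs\bal(A,2^{-n+1})$ requires running your chain construction starting from an arbitrary $y\in A_n$ and using the tail-sum estimate $\sum_{k\goe n}2^{-k}=2^{-n+1}$; since you have already set up precisely that construction to prove $A\nnempty$, this is a routine completion rather than a gap, and one should also note at the end that convergence of the chosen subsequence plus the Cauchy property gives convergence of the original sequence.
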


We use the standard notation $\exp(X)$ for the Vietoris hyperspace of nonempty compact sets in $X$. Note that $\exp(X)\subs\bcl(X)$ for every metric space $X = \pair Xd$ and it is well known that the Hausdorff metric induces the Vietoris topology on $\exp(X)$. However, if closed bounded sets of $X$ are not compact, then the space $\bcl_H(X)$ is very different from $\bcl_V(X)$ endowed with the Vietoris topology.
We use the following notation:
$$A^-=\setof{C\in \cld(X)}{C\cap A\nnempty} \quad\text{and}\quad A^+=\setof{C\in \cld(X)}{C\subs A},$$
where $A\subs X$. When dealing with $\bcl(X)$ (or other subspace of $\cld(X)$), we still write $A^-$ and $A^+$ instead of $A^-\cap \bcl(X)$ and $A^+\cap \bcl(X)$ respectively.

In the rest of this section, we shall give preliminary results of infinite-dimensional topology. For the details, we refer to the book \cite{BRZ}. We abbreviate ``absolute neighborhood retract'' to ``ANR''.

Let $X = \pair Xd$ be a metric space. It is said that a map $\map fYX$ can be {\em approximated} by maps in a class $\Ef$ of maps if for every map $\map \alpha X{(0,1)}$ there exists a map $\map gYX$ which belongs to $\Ef$ and such that $d(f(y),g(y))<\alpha(f(y))$ for every $y\in Y$. A closed subset $A \subs X$ is a {\em \Z-set} in $X$ if the identity map $\id_X$ of $X$ can be approximated by maps $\map fXX$ such that $\img fX\cap A=\emptyset$. Strengthening the last condition to $\cl_X(\img fX)\cap A=\emptyset$, we define the notion of a {\em strong \Z-set}. In case $X$ is locally compact, every \Z-set in $X$ is a strong \Z-set. Moreover, it is well known that every \Z-set in an $\ell_2$-manifold is a strong \Z-set. A countable union of (strong) \Z-sets is called a ({\em strong}) {\em \Zsig-set}. We call $X$ a ({\em strong}) {\em \Zsig-space} if it is a (strong) \Zsig-set in itself. An embedding $\map fXY$ is called a {\em \Z-embedding} if $\img fX$ is a \Z-set in $Y$.

It is said that $D\subs X$ is {\em homotopy dense} in $X$ if there exists a homotopy $\map h{X\times[0,1]}X$ such that $h_0 = \id$ and $\img{h_t}X\subs D$ for every $t > 0$, where $h_t(x)=h(x,t)$. The complement of a homotopy dense subset of $X$ is said to be {\em homotopy negligible}. If $A\subs X$ is a homotopy negligible closed set then $A$ is a \Z-set in $X$.

\begin{fact}\label{Z-set}
For a closed set $A$ in an ANR $X$,
 the following are equivalent:
\begin{alphenume}
\item
 $A$ is a \Z-set in $X$;
\item
 each map $\map f{[0,1]^n}X$, $n\in\nat$,
 can be approximated by maps into $X\setminus A$;
\item
 $A$ is homotopy negligible in $X$.
\end{alphenume}
\end{fact}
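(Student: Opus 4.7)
The plan is to prove the cycle (a) $\Rightarrow$ (b) $\Rightarrow$ (c) $\Rightarrow$ (a). Since $A$ is closed in the ANR $X$, the open set $X\setminus A$ is itself an ANR, a fact used repeatedly below.

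The implication (a) $\Rightarrow$ (b) is immediate by composition: given $\map f{[0,1]^n}X$ and $\map\alpha X{(0,1)}$, apply (a) to obtain $\map gXX$ with $\img gX\cap A=\emptyset$ and $d(x,g(x))<\alpha(x)$; then $g\circ f$ takes values in $X\setminus A$ and is $\alpha$-close to $f$.

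For (c) $\Rightarrow$ (a), let $\map h{X\times[0,1]}X$ witness that $A$ is homotopy negligible and let $\map\alpha X{(0,1)}$ be given. Using continuity of $h$ at each $(x,0)$, pick an open neighborhood $U_x\ni x$ and a number $t_x>0$ with $d(h(y,t),y)<\alpha(y)$ for all $y\in U_x$ and $t\in[0,t_x]$. Take a locally finite partition of unity $\{\phi_i\}$ subordinate to a refinement of $\{U_x\}_{x\in X}$, with associated values $t_i>0$, and set $s(y)=\sum_i\phi_i(y)t_i$. Then $s$ is continuous and strictly positive, so $g(y)=h(y,s(y))$ maps into $X\setminus A$; for the distance estimate, pick the index $i$ with $\phi_i(y)>0$ and $t_i$ maximal, so $s(y)\leq t_i$ and $y$ lies in the corresponding $U_{x_i}$, whence $d(y,g(y))<\alpha(y)$.

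The hard step is (b) $\Rightarrow$ (c), which promotes the cube-level approximations to a genuine homotopy using the ANR structure. For each $\ntr$, choose a locally finite open cover $\Yu_n$ of $X$ with sufficiently small mesh, take its nerve $|N(\Yu_n)|$, and form the canonical maps $\map{\kappa_n}X{|N(\Yu_n)|}$ via a partition of unity and $\map{\lambda_n}{|N(\Yu_n)|}X$ defined on vertices and extended over skeleta using the ANR property, so that $\lambda_n\circ\kappa_n$ is close to $\id_X$. Now apply (b) to each simplex of $|N(\Yu_n)|$ (a cube) and use the ANR property of $X\setminus A$ to splice the perturbations across adjacent simplices, producing $\lambda_n'$ with values in $X\setminus A$; set $g_n=\lambda_n'\circ\kappa_n$, arranging $d(x,g_n(x))<1/2^n$. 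Finally, using the ANR property of $X\setminus A$ once more, join $g_n$ to $g_{n+1}$ by a homotopy $H_n$ in $X\setminus A$ with uniformly small tracks (feasible since $g_n$ and $g_{n+1}$ are close), concatenate the $H_n$ on the intervals $[2^{-n-1},2^{-n}]$, and extend by $\id$ at $t=0$; the track bounds force continuity at $t=0$. The main obstacle is this nerve construction together with the inductive skeletal deformation, which must simultaneously control the approximation to $\id_X$ and the avoidance of $A$; both are standard devices of infinite-dimensional topology \cite{BRZ}.
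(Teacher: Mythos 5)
Your implications (a)$\Rightarrow$(b) and (c)$\Rightarrow$(a) are fine: the composition argument and the partition-of-unity reparametrization $g(y)=h(y,s(y))$ are exactly the standard proofs. (Note the paper itself states Fact \ref{Z-set} with no proof, as background quoted from the literature \cite{BRZ}, so the only issue is whether your (b)$\Rightarrow$(c) is sound.) In that step the nerve/domination part is standard, and the ``splicing'' over the simplices of the nerve can indeed be made to work, because at each stage the map already defined on $\partial\sigma$ has compact image $K\subs X\setminus A$ which is fixed \emph{before} the next approximation is chosen; compactness of $K$ gives a positive $\delta$ such that $\delta$-close maps into a neighbourhood of $K$ are joined by small homotopies inside $X\setminus A$, so the corrections on collars are legitimate there.

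The genuine gap is your final concatenation. You join $g_n$ to $g_{n+1}$ by a homotopy $H_n$ inside $X\setminus A$ with small tracks, ``feasible since $g_n$ and $g_{n+1}$ are close.'' But the closeness you have is closeness in the metric of $X$, and for maps into the open set $X\setminus A$ this yields no homotopy within $X\setminus A$ at all, let alone a small one: the small-homotopy theorem for the ANR $X\setminus A$ requires the maps to be $\mathcal{W}$-close for a cover $\mathcal{W}$ of $X\setminus A$ whose members must in general shrink near $A$, whereas $g_n$ and $g_{n+1}$ necessarily come arbitrarily close to $A$ (being close to $\id$ at points of $A$) while their mutual distance is only bounded by $2^{-n}+2^{-n-1}$. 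For an arbitrary closed $A$ the principle you invoke is simply false (two nearby constant maps on opposite sides of $A$), and its validity when $A$ is a \Z-set is essentially equivalent to the homotopy density you are proving, so using it here is circular. Nor is it repaired by choosing $g_{n+1}$ after $g_n$: the needed estimate $d(g_{n+1}(x),g_n(x))<\gamma_n(g_n(x))$, with $\gamma_n$ a Lebesgue-type function for $\mathcal{W}_n$ that tends to $0$ near $A$, cannot be reached through approximations of the identity, since the already fixed error $d(g_n(x),x)<2^{-n}$ enters the triangle inequality. The standard ways to close this gap are either to build the whole homotopy in one induction, realizing in $X\setminus A$ the telescope formed by the simplicial mapping cylinders of the projections between nerves of successive star-refinements (the mesh control then gives continuity at $t=0$), or to deduce from (b) that $A$ is locally homotopy negligible in the sense of Toru\'nczyk (for every open $U\subs X$ the inclusion of $U\setminus A$ into $U$ is a weak homotopy equivalence; here compactness of spheres makes the relative correction legitimate) and then invoke the theorem that the complement of such a set is homotopy dense. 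As written, your last step assumes what is to be proved.
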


\begin{fact}\label{sedgfasf}
Let $D$ be a homotopy dense subset of an ANR $X$. Then the following hold:
\begin{romanenume}
\item
 $D$ is also an ANR.
\item
 A closed set $A\subs X$ is a \Z-set in $X$ if and only if $A\cap D$ is a \Z-set in $D$.
\item
 If $A\subs X$ is a strong \Z-set in $X$ then $A\cap D$ is a strong \Z-set in $D$.
\end{romanenume}
\end{fact}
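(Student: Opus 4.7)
The strategy throughout is to use the homotopy $\map h{X\times[0,1]}X$ witnessing homotopy density (with $h_0=\id$ and $h_t(X)\subs D$ for $t>0$) to modify maps going into $X$ so that they take values in $D$, while keeping control of how much they move. The recurring difficulty is a continuous-choice problem for the ``time'' parameter of $h$.

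For (i), I would verify the neighborhood extension property of an ANR directly. Given a closed set $B$ in a metric space $Y$ and $\map f B D$, use that $X$ is an ANR to extend $f$, regarded in $X$, to $\map{\til f}{U}{X}$ on an open neighborhood $U$ of $B$ in $Y$. Put $g(y)=h(\til f(y),\alpha(y))$ with $\alpha(y)=\min\{1,\dist_Y(y,B)\}$. Then $g\rest B=f$ since $\alpha\rest B=0$, while $g(y)\in D$ for $y\in U\setminus B$ since $\alpha(y)>0$; hence $g\colon U\to D$ is the desired neighborhood extension.

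For (ii), I would use Fact~\ref{Z-set}(b), i.e.\ approximation of maps from cubes. If $A$ is a \Z-set in $X$ and $\map f{[0,1]^n}{D}$ is given, by compactness of the domain it suffices to approximate $f$ within an arbitrary positive constant $\eps$. The \Z-property of $A$ in $X$ produces $\map \phi{[0,1]^n}{X\setminus A}$ with $d(\phi,f)<\eps/2$; since $\img\phi{[0,1]^n}$ is compact and disjoint from the closed set $A$, it sits at positive distance from $A$, so by uniform continuity of $h$ on $\img\phi{[0,1]^n}\times[0,1]$ a sufficiently small $t>0$ yields $d(h_t\circ\phi,\phi)<\eps/2$ with $h_t\circ\phi$ still missing $A$ and already lying in $D$. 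For the converse, push $\map f{[0,1]^n}{X}$ into $D$ via $h_t$ for small $t>0$ and then apply the \Z-property of $A\cap D$ in $D$.

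For (iii) the same scheme applies, additionally requiring the closure of the image to miss $A$. Given $\map\beta D{(0,1)}$, after an auxiliary construction producing a positive continuous function on $X$ that is dominated by $\beta/3$ on $D$ (this step is delicate since $D$ is only homotopy dense and a naive continuous extension need not exist), apply strong \Z-ness of $A$ in $X$ to obtain $\map\phi X X$ correspondingly close to the identity with $\cl_X\img\phi X\cap A=\emptyset$. By normality, fix an open $U\sups\cl_X\img\phi X$ in $X$ with $\cl_X U\cap A=\emptyset$. Then, by paracompactness of $D$ and a partition-of-unity argument, choose a continuous $\map\gamma D{(0,1]}$ such that $\psi(x):=h(\phi(x),\gamma(x))$ lies in $U$ and is $\beta$-close to $x$; both are open conditions on $(x,t)$ that hold in the limit $t\to0^+$, thanks to the choice of $U$ and of $\phi$. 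Then $\psi$ maps $D$ into $D$ (since $\gamma>0$), $d(x,\psi(x))<\beta(x)$, and $\cl_D\img\psi D\cap(A\cap D)\subs\cl_X U\cap A=\emptyset$, completing the proof. The hardest step is the combination of this auxiliary dominating construction for $\beta$ with the partition-of-unity choice of $\gamma$, which together are what force the strong (closure-level) avoidance of $A$.
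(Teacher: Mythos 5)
The paper gives no proof of this Fact at all: it is quoted as a standard result of infinite-dimensional topology, with \cite{BRZ} cited for details, so only the internal correctness of your argument can be judged. Your parts (i) and (ii) are correct and are the usual arguments: for (i), the neighborhood extension $g(y)=h(\til f(y),\min\{1,\dist(y,B)\})$ works exactly as you describe, and for (ii) the compactness of $[0,1]^n$ lets you reduce to constant $\eps$, push the approximating map into $D$ (respectively, push a map of $X$ into $D$) by $h_t$ for small $t>0$, and invoke Fact~\ref{Z-set}(b) for the ANRs $X$ and $D$ (using (i)).

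Part (iii), however, has a genuine gap at its very first step. The auxiliary object you posit --- a positive continuous function on $X$ dominated by $\beta/3$ on $D$ --- does not exist in general; the issue is not delicacy but impossibility. Take $X=[0,1]$, $D=(0,1]$, which is homotopy dense via $h(x,t)=(1-t)x+t$, and $\beta(x)=x/2$: any continuous $\gamma$ on $X$ with $\gamma\loe\beta/3$ on $D$ satisfies $\gamma(0)=\lim_{x\to 0^+}\gamma(x)=0$, so it cannot be positive on $X$. Since every subsequent step of your argument (the choice of $\phi$ ``correspondingly close'' to the identity, of $U$, and of $\gamma$ on $D$) is calibrated by this nonexistent function, the proof of (iii) does not go through as written. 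The feature a correct argument must exploit is that only $\cl_D(\img{\psi}{D})\cap A\cap D$ has to be empty: if $\psi$ is $\beta$-close to $\id_D$, $\psi(x_n)\to d\in D$ and $\beta(x_n)\to 0$, then also $x_n\to d$ and hence $\beta(d)=0$, a contradiction; therefore every point of $\cl_D(\img{\psi}{D})\cap D$ already lies in the closure of $\img{\psi}{\setof{x\in D}{\beta(x)\goe\eps}}$ for some $\eps>0$, and on such sets constant-$\eps$ approximations of $\id_X$ supplied by the strong \Z-set property of $A$ in $X$ are available. Assembling these into one continuous map $\psi$ (a gluing over the sets $\setof{x\in D}{\beta(x)\goe\eps}$) is the real content of (iii); it is genuinely different from, and cannot be replaced by, extending a positive control function over all of $X$.
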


\begin{prop}\label{wetafqwtrqf}
Assume that $X$ is a homotopy dense subset of a \Q-manifold $M$. Then $X$ is an ANR and every \Z-set in $X$ is a strong \Z-set. Furthermore, $X$ is a strong \Zsig-space if and only if $X$ is contained in a \Zsig-set in $M$.
\end{prop}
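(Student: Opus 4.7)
The plan is to prove the three assertions in sequence, each following almost directly from Fact~\ref{sedgfasf} once we exploit local compactness of the $\Q$-manifold $M$. First, since every $\Q$-manifold is an ANR and $X$ is homotopy dense in $M$, Fact~\ref{sedgfasf}(i) immediately gives that $X$ is an ANR.

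For the claim that every $\Z$-set in $X$ is a strong $\Z$-set, my strategy is to pass to the $M$-closure. Given a $\Z$-set $A\subs X$, let $B=\cl_M A$; because $A$ is closed in $X$, we have $B\cap X=A$. Fact~\ref{sedgfasf}(ii) then says $B$ is a $\Z$-set in $M$, and since $M$ is locally compact, the remark in the preliminaries gives that $B$ is in fact a strong $\Z$-set in $M$. Fact~\ref{sedgfasf}(iii) transfers this back: $A=B\cap X$ is a strong $\Z$-set in $X$.

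The final equivalence is the same idea applied countably many times. If $X=\bigcup_{n} A_n$ with each $A_n$ a strong $\Z$-set in $X$, then each $A_n$ is in particular a $\Z$-set in $X$, so $\cl_M A_n$ is a $\Z$-set in $M$ by Fact~\ref{sedgfasf}(ii); hence $X\subs\bigcup_n \cl_M A_n$ lies in a $\Zsig$-set in $M$. Conversely, suppose $X\subs\bigcup_n B_n$ with each $B_n$ a $\Z$-set in $M$. By local compactness of $M$ each $B_n$ is a strong $\Z$-set in $M$, and Fact~\ref{sedgfasf}(iii) then yields that $A_n:=B_n\cap X$ is a strong $\Z$-set in $X$. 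Since $X=\bigcup_n A_n$, the space $X$ is a strong $\Zsig$-space.

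There is no real obstacle here: the proposition is essentially bookkeeping that combines the three parts of Fact~\ref{sedgfasf} with the coincidence of $\Z$-sets and strong $\Z$-sets in locally compact ANRs. The only small point to keep an eye on is the identity $\cl_M A\cap X=A$ for $A$ closed in $X$, which is immediate from the definition of the subspace topology on $X\subs M$.
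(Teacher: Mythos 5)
Your argument is correct and follows essentially the same route as the paper: the ``furthermore'' equivalence is established by exactly the same back-and-forth between Fact \ref{sedgfasf}(ii)--(iii), the identity $\cl_M A\cap X=A$, and the fact that \Z-sets in the locally compact \Q-manifold $M$ are strong \Z-sets. The only difference is that you also write out the first two assertions, which the paper leaves unverified as immediate consequences of Fact \ref{sedgfasf}, and your treatment of them is correct.
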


\begin{proof}
We verify only the ``furthermore" statement. Assume $X\subs\bigcup_{\ntr}Z_n$, where each $Z_n$ is a \Z-set in $M$. Then each $Z_n$ is a strong \Z-set in $M$, because $M$ is locally compact, and therefore by Fact \ref{sedgfasf} (iii), each $Z_n\cap X$ is a strong \Z-set in $X$.
Conversely, if $X=\bigcup_{\ntr}X_n$, where each $X_n$ is a (strong) \Z-set in $X$, then by Fact \ref{sedgfasf} (ii), $Z_n=\cl_{M}X_n$ is a \Z-set in $M$. Clearly, $X\subs\bigcup_{\ntr}Z_n$.
\end{proof}

Let $\Cee$ be a topological class of spaces,
 that is, if $X$ is homeomorphic to some $Y \in \Cee$ then $X$ also belongs to $\Cee$. It is said that $\Cee$ is {\em open} (resp.\ {\em closed}) {\rm hereditary} if $X\in\Cee$ whenever $X$ is an open (resp.\ closed) subspace of some $Y\in\Cee$. A space $X$ is called {\em strongly $\Cee$-universal} if for every $Y\in\Cee$ and every closed subset $A \subs Y$, every map $\map fYX$ such that $f\rest A$ is a \Z-embedding can be approximated by \Z-embeddings $\map gXY$ such that $g\rest A=f\rest A$. Similarly, one defines {\em $\Cee$-universality}, relaxing the above condition to the case $A=\emptyset$, that is, $X$ is {\em $\Cee$-universal} if every map $\map fYX$ of $Y\in\Cee$ can be approximated by \Z-embeddings.

\begin{fact}\label{univ-strong}
Let $X$ be an ANR such that every \Z-set in $X$ is strong and let $\Cee$ be an open and closed hereditary topological class of spaces. If every open subspace $U\subs X$ is $\Cee$-universal then $X$ is strongly $\Cee$-universal.
\end{fact}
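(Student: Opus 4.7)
Given $Y\in\Cee$, a closed subset $A\subs Y$, a map $\map fYX$ with $f\rest A$ a \Z-embedding, and a control $\map\alpha X{(0,1)}$, the plan is to execute the standard \emph{push-off and approximate} construction. Since $f(A)$ is a \Z-set in $X$ and thus homotopy negligible by Fact \ref{Z-set}(c), there is a homotopy $\map h{X\times[0,1]}X$ with $h_0=\id$ and $h_t(X)\subs X\setminus f(A)$ for $t>0$. Pick a continuous $\map\lambda Y{[0,1]}$ with $\lambda^{-1}(0)=A$, so small that $f'(y):=h(f(y),\lambda(y))$ is $\tfrac12\alpha(f(y))$-close to $f(y)$; then $f'\rest A=f\rest A$ and $f'(Y\setminus A)\subs U:=X\setminus f(A)$. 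Since $\Cee$ is open hereditary, $Y\setminus A\in\Cee$, so the hypothesized $\Cee$-universality of $U$ yields a \Z-embedding $\map{g_0}{Y\setminus A}U$ with $d(g_0(y),f'(y))<\beta(f'(y))$ for a control $\map\beta U{(0,1)}$ chosen with $\beta(x)<\tfrac12\alpha(x)$ and $\beta(x)<\tfrac12\dist(x,f(A))$.

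Define $\map gYX$ by $g\rest A=f\rest A$ and $g\rest(Y\setminus A)=g_0$. The $\alpha$-closeness and injectivity (since $g(A)=f(A)$ and $g(Y\setminus A)\subs U$ are disjoint) are immediate, and the second bound on $\beta$ yields $d(g_0(y),f(y_*))\loe\tfrac32 d(f'(y),f(y_*))$ whenever $y_*\in A$, giving continuity of $g$ at points of $A$; closedness of $g(Y)=f(A)\cup g_0(Y\setminus A)$ in $X$ follows since the frontier of $g_0(Y\setminus A)$ in $X$ lies in $X\setminus U=f(A)$. The \Z-set property of $g(Y)$ in $X$ is where the strong \Z-set hypothesis is indispensable: one first approximates $\id_X$ by $\map\phi XX$ with $\Cl\phi(X)\cap f(A)=\emptyset$, so that $\phi(X)\subs U$, and then uses the \Z-set property of $g_0(Y\setminus A)$ in the ANR $U$ to perturb $\phi$ to a map $\phi'$ whose image avoids $g_0(Y\setminus A)$, hence misses $g(Y)$.

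The principal obstacle is proving that $g$ is genuinely an embedding on all of $Y$: concretely, that a sequence $y_n\in Y\setminus A$ with $g_0(y_n)\to f(y_*)\in f(A)$ must satisfy $y_n\to y_*$ in $Y$. The estimates force $f'(y_n)\to f(y_*)$, but bridging this to $y_n\to y_*$ is not automatic because $f$ need not be injective off $A$; the approximation $g_0$ has to reflect the metric geometry of $Y$ near $A$. The standard remedy is to apply the $\Cee$-universality step iteratively over a locally finite open cover $\{V_i\}$ of $Y\setminus A$ whose elements shrink in diameter as $V_i$ approaches $A$, stitching successive approximations with fine control so that the local fibers of $g_0$ near $A$ faithfully record the distance to $A$. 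The strong \Z-set hypothesis (automatic for every \Z-set in $X$) provides exactly the slack needed for this inductive patching, and the resulting $g$ is a \Z-embedding satisfying all required conditions.
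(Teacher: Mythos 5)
The paper does not prove this Fact at all --- it is quoted as known from the absorbing-sets machinery (cf.\ \cite{BRZ}) --- so your proposal has to be judged on its own merits. The first two paragraphs are fine as far as they go: pushing $f$ off the \Z-set $f[A]$ rel $A$, applying $\Cee$-universality of $U=X\setminus f[A]$ to $Y\setminus A\in\Cee$, and gluing does give a continuous injection $g$ with $g\rest A=f\rest A$, closed image and the required closeness; your $\tfrac32$-estimate for continuity at points of $A$ is correct. (Incidentally, the step you call the place where strongness is ``indispensable'' does not actually need it: since $\phi$ maps $X$ into $U$, composing with a map $\theta\colon U\to U$ that avoids the \Z-set $g_0[Y\setminus A]$ and is $\gamma$-close to $\id_U$ for a suitable control $\gamma\colon U\to(0,1)$ already produces an approximation of $\id_X$ missing $g(Y)$, using only the plain \Z-set property of $f[A]$.)

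The genuine gap is exactly the one you flag and then wave away. For non-compact $Y$ the map $g$ constructed above is only a closed continuous injection, not an embedding: your estimates give $f'(y_n)\to f(y_*)$ whenever $g_0(y_n)\to f(y_*)$, but since $f$ need not be injective or proper off $A$ this in no way forces $y_n\to y_*$. The Fact is stated for an arbitrary open and closed hereditary class (not just $\Emm_0$, where compactness of $Y$ would rescue you), and the paper's definition of strong $\Cee$-universality demands approximation by \Z-embeddings rel $A$. Your proposed remedy --- iterating $\Cee$-universality over a locally finite cover of $Y\setminus A$ and ``stitching'' --- is not carried out, and as described it cannot work: plain $\Cee$-universality only approximates maps of a whole space by \Z-embeddings, with no control rel a prescribed closed subset or rel the overlaps of the cover, so obtaining compatible local approximations is precisely the rel-version of universality that the Fact is supposed to deliver; invoked in this form the argument is circular. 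Nothing is said about why the limit of the patched maps exists, is injective, is an embedding, and has \Z-set image --- and this limiting argument is exactly where the hypothesis that every \Z-set in $X$ is strong does real work in the known proofs. As it stands, the proposal establishes the statement essentially only for compact $Y$, leaving the actual content of the quoted Fact unproven.
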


Given a topological class $\Cee$ of spaces, we denote by $\sigma\Cee$ the class of all spaces of the form $X=\bigcup_{\ntr}X_n$, where each $X_n$ is closed in $X$ and $X_n \in\Cee$. Recall that $X$ is a {\em $\Cee$-absorbing space} if $X \in \sigma\Cee$ is a strongly $\Cee$-universal ANR which is a strong \Zsig-space. In case $\Cee$ is closed hereditary, we can write $X=\bigcup_{\ntr}X_n$, where each $X_n$ is a strong \Z-set in $X$ and $X_n \in \Cee$.

We shall denote by $\Emm_0$ and $\Emm_1$ the classes of all compact metrizable spaces and all Polish spaces\footnote{I.e., separable completely metrizable spaces.} respectively. Let $\pseudobd=\Q\setminus\pseudoint$ denote the pseudo-boundary\footnote{In some articles (e.g. \cite{BRZ}), $\Sigma$ denotes the {\em radial interior} of \Q, i.e., $\Sigma=\setof{x\in \Q}{\sup_{\ntr}|x(n)|<1}$. However, there is an auto-homeomorphism of $\Q$ which maps the pseudoboundary onto the radial interior.} of $\Q$.

\begin{fact}\label{3e4gwdfs}
If $X$ is an $\Emm_0$-absorbing homotopy dense subspace of\/ \Q,
 then $\pair \Q X\topiso \pair \Q\pseudobd$.
In case $X\subs\Qmp$, $\pair{\Qmp}{X}\topiso \pair{\Qmp}{\pseudobd}$.
\end{fact}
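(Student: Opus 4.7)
The plan is to deduce both statements from the Bestvina--Mogilski uniqueness theorem for absorbing subspaces of $\Q$-manifolds (see \cite{BRZ}), which asserts that any two homotopy dense $\Cee$-absorbing subspaces of a single $\Q$-manifold $M$ are related by some self-homeomorphism of $M$. I would apply this with $\Cee = \Emm_0$.

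First I would verify that $\pseudobd$ itself is an $\Emm_0$-absorbing homotopy dense subspace of $\Q$. Writing $\pseudobd = \bigcup_{\ntr}\pseudobd_n$ with $\pseudobd_n = \setof{x\in\Q}{x_i = \pm 1 \text{ for some } i \loe n}$, each $\pseudobd_n$ is a \Z-set in $\Q$ homeomorphic to $\Q$; this gives the $\sigma\Emm_0$ decomposition, and Proposition~\ref{wetafqwtrqf} yields the strong $\Zsig$ property. Strong $\Emm_0$-universality is classical (cf.\ \cite{BRZ}), following from the fact that each $\pseudobd_n \topiso \Q$ is itself strongly $\Emm_0$-universal together with a standard insertion argument. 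Homotopy density is witnessed coordinate-wise by $H_t(x)_n = (1 - \lambda_n(t)) x_n + \lambda_n(t)$, where each $\lambda_n\colon [0,1] \to [0,1]$ is continuous with $\lambda_n(0) = 0$ and with the property that for each $t > 0$ some $\lambda_n(t) = 1$; then $H_0 = \id_\Q$ and $H_t(\Q) \subs \pseudobd$ for $t > 0$. The uniqueness theorem applied in $M = \Q$ to $X$ and $\pseudobd$ produces the desired ambient homeomorphism, proving $\pair\Q X \topiso \pair\Q\pseudobd$.

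For the second part, rather than modifying the above homeomorphism so as to fix the origin, I would apply the uniqueness theorem directly in the open $\Q$-manifold $M = \Qmp$. The key observation is that $X$ and $\pseudobd$ are still homotopy dense in $\Qmp$: if $\map H{\Q \times [0,1]}\Q$ witnesses homotopy density of $X$ in $\Q$ and $X \subs \Qmp$, then $H_t(\Q) \subs X \subs \Qmp$ for every $t > 0$, while $H_0 = \id_\Q$ sends $\Qmp$ into $\Qmp$; hence $H$ restricts to a homotopy $\Qmp \times [0,1] \to \Qmp$. The analogous restriction works for the homotopy just constructed for $\pseudobd$, whose values for $t > 0$ already lie in $\pseudobd \subs \Qmp$. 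Since being $\Emm_0$-absorbing is an intrinsic property of the space, both $X$ and $\pseudobd$ remain $\Emm_0$-absorbing subspaces of $\Qmp$, and the uniqueness theorem delivers a homeomorphism of $\Qmp$ carrying $X$ onto $\pseudobd$.

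The main obstacle lies not in the bookkeeping but in the prerequisites: the Bestvina--Mogilski uniqueness theorem and the classical status of $\pseudobd$ as an $\Emm_0$-absorbing subspace of $\Q$. Once these are imported from \cite{BRZ}, the only extra input needed for the second assertion is the simple observation that the witnessing homotopies avoid $0$, which transplants the problem into $\Qmp$ and bypasses any direct manipulation at the origin.
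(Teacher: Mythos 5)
Your argument is correct and is precisely the standard absorbing-set uniqueness argument that the paper implicitly relies on (Fact~\ref{3e4gwdfs} is stated without proof, as a known consequence of the theory of $\Emm_0$-absorbing sets in \cite{BRZ}); both steps --- applying the uniqueness theorem in $\Q$ and then, after restricting the witnessing homotopies, in the $\Q$-manifold $\Qmp$ --- are sound. One cosmetic slip: $\pseudobd_n$ is a finite union of faces of $\Q$, hence compact and a \Z-set, but it is not homeomorphic to $\Q$ (e.g.\ $\pseudobd_1$ is disconnected); only its compactness and the \Z-set property are needed, so the argument is unaffected.
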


\begin{fact}\label{weosaijfpajf}
Assume that $X$ is a both homotopy dense and homotopy negligible subset of a Hilbert cube manifold $M$. If $X$ is $\sigma$-compact then it is a strong \Zsig-space.
\end{fact}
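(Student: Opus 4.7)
The plan is to decompose $X$ using $\sigma$-compactness into a countable union of compact (hence closed in $M$) pieces, and then show that each such piece is a strong \Z-set in $X$ by chasing the Z-set property from $M$ down to $X$.

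First I would write $X=\bigcup_{\ntr}K_n$ with each $K_n$ compact. Since $K_n\subs X$ and $X$ is homotopy negligible in $M$, the same homotopy $\map h{M\times[0,1]}M$ witnessing that $M\setminus X$ is homotopy dense also satisfies $\img{h_t}M\subs M\setminus K_n$ for every $t>0$, so $K_n$ is itself homotopy negligible in $M$. Since $K_n$ is closed in $M$ (being compact) and $M$ is an ANR (being a \Q-manifold), Fact \ref{Z-set} yields that $K_n$ is a \Z-set in $M$.

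Next, since $M$ is locally compact, every \Z-set in $M$ is a strong \Z-set, so each $K_n$ is a strong \Z-set in $M$. Now I invoke Fact \ref{sedgfasf}(iii): since $X$ is homotopy dense in the ANR $M$ and $K_n$ is a strong \Z-set in $M$, the intersection $K_n\cap X=K_n$ is a strong \Z-set in $X$. Therefore $X=\bigcup_{\ntr}K_n$ expresses $X$ as a countable union of strong \Z-sets in itself, which is exactly the definition of a strong \Zsig-space.

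I do not expect a real obstacle here: the proof is essentially bookkeeping with the facts already stated. The only point that requires a moment's care is the step that homotopy negligibility of $X$ in $M$ is inherited by closed subsets of $X$, which however is immediate from the definition. Everything else is a direct application of Facts \ref{Z-set} and \ref{sedgfasf} together with local compactness of \Q-manifolds.
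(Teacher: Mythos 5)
Your proof is correct and follows essentially the same route as the paper: decompose $X$ into compact pieces $K_n$, note each is closed in $M$ and (via homotopy negligibility of $X$ and local compactness of the \Q-manifold) a strong \Z-set in $M$, and then pull this down to $X$ by Fact \ref{sedgfasf}(iii). The paper's own proof is just a terser version of the same chain, leaving the intermediate \Z-set steps implicit.
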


\begin{proof}
Assume $X=\bigcup_{\ntr}K_n$, where each $K_n$ is compact. Then each $K_n$ is closed in $M$ and therefore it is a strong \Z-set by Fact \ref{sedgfasf} (iii).
\end{proof}

\section{Borel classes of several Hausdorff hyperspaces}\label{borelclasses}

Let $\pair{\til X}d$ denote the completion of $\pair Xd$.
We identify $\bcl(X,d)$ with the subspace of $\bcl(\til X,d)$,
 via the isometric embedding $A\mapsto \cl_{\til X}A$.
Then, $\pair{\bcl(\til X)}{d_H}$ is a completion of $\pair{\bcl(X)}{d_H}$.
Moreover, it should be noticed that
 $A\in \bcl(\til X)\setminus \bcl(X)$ if and only if
 $A\ne \cl_{\til X}(A\cap X)$.
Saint Raymond proved in \cite[Th\'eor\`eme 1]{S} that
 if $X$ is the union of a Polish subset and a $\sigma$-compact subset
 then $\bcl_H(X)$ is $F_{\sigma\delta}$
 (hence Borel) in $\bcl_H(\til X)$.\footnote
	{In \cite{S}, $X$ is assumed to be a subspace of a compact metric space,
	but the proof is valid without this assumption.
	Moreover, it is also proved in \cite[Th\'eor\`eme 6]{S} that
	if $\bcl_H(X)$ is absolutely Borel (i.e., Borel in its completion)
	then $X$ is the union of a Polish subset and a $\sigma$-compact subset.}
In particular, we have the following:

\begin{prop}\label{rthsdpio}
If $X = \pair Xd$ is $\sigma$-compact
 then the space $\pair{\bcl(X)}{d_H}$ is $F_{\sigma\delta}$
 in its completion $\pair{\bcl(\til X)}{d_H}$.
\end{prop}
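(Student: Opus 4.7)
The plan is to express $\bcl_H(X)$ as a countable intersection of $F_\sigma$ subsets of $\bcl_H(\til X)$. Under the identification described just before the proposition, an element $B\in\bcl(\til X)$ lies in $\bcl(X)$ exactly when $B=\cl_{\til X}(B\cap X)$, i.e.\ when $B\cap X$ is dense in $B$. Since $X$ is $\sigma$-compact it is separable, so $\til X$ is separable; fix a countable base $\{U_i\}_{i\in\omega}$ for the topology of $\til X$. Then $B\cap X$ is dense in $B$ if and only if for every $i\in\omega$, either $U_i\cap B=\emptyset$ or $U_i\cap B\cap X\nnempty$. This gives
$$\bcl(X)=\bigcap_{i\in\omega}(A_i\cup C_i),$$
where $A_i=\{B\in\bcl(\til X):U_i\cap B=\emptyset\}$ and $C_i=\{B\in\bcl(\til X):U_i\cap B\cap X\nnempty\}$.

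The set $A_i$ is closed, since its complement is the open set $U_i^-$. For $C_i$ I would exploit $\sigma$-compactness: write $X=\bigcup_{\ntr}K_n$ with $K_n$ compact. Each $K_n\cap U_i$ is relatively open in the compact metric space $K_n$, hence itself $\sigma$-compact, so $X\cap U_i=\bigcup_m L_{i,m}$ for some compact $L_{i,m}\subs\til X$. Then
$$C_i=\bigcup_m\{B\in\bcl(\til X):B\cap L_{i,m}\nnempty\}.$$
The technical point supporting this step is that for any compact $L\subs\til X$, the hitting set $\{B:B\cap L\nnempty\}$ is closed in $\bcl(\til X)$: if $B_n\to B$ in the Hausdorff metric and $y_n\in B_n\cap L$, compactness of $L$ yields a subsequence $y_{n_k}\to y\in L$, while $d(y_{n_k},B)\loe d_H(B_{n_k},B)\to 0$ forces $y\in B$. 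Consequently each $C_i$ is $F_\sigma$, so is each $A_i\cup C_i$, and $\bcl(X)$ is $F_{\sigma\delta}$.

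The main obstacle is to resist a tempting but incorrect characterization, namely that $B\in\bcl(X)$ ought to be equivalent to ``for every $\eps>0$ there exists $n$ with $B\subs\clbal(B\cap K_n,\eps)$''. That equivalence breaks down as soon as $B$ fails to be compact: for $\til X=\ell_2$ with $X$ any countable dense subset (trivially $\sigma$-compact), the closed unit ball $B$ satisfies $B=\cl_{\til X}(B\cap X)$ yet is not contained in any $\eps$-neighbourhood of a finite set. The basis-indexed decomposition above is designed precisely to avoid asking any individual $K_n$ to be a net for $B$, and this is what makes the argument go through for arbitrary $\sigma$-compact $X$.
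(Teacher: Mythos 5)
Your proof is correct and is essentially the paper's own argument in complemented form: writing $\bcl(X)=\bigcap_i(A_i\cup C_i)$ with $C_i$ a countable union of hitting sets of compact pieces of $U_i\cap X$ is exactly the De Morgan dual of the paper's formula $\bcl(\til X)\setminus\bcl(X)=\bigcup_n\bigl(U_n^-\cap\bigcap_k(\til X\setminus K^n_k)^+\bigr)$, and the two key facts used (openness of $U^-$ for open $U$, closedness of $K^-$ for compact $K$) are the same. No gaps; the concluding remark about the incorrect ``$\eps$-net'' characterization is harmless commentary.
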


Moreover, the following can be easily obtained
 by adjusting the proof of \cite[Th\'eor\`eme 1]{S}:\footnote
     {A similar result was proved by Costantini \cite{Cost}
     for the Wijsman topology.}

\begin{prop}\label{owteepgsdgfa}
If $X = \pair Xd$ is Polish ($d$ is not necessarily complete) then the space $\pair{\bcl(X)}{d_H}$ is $G_\delta$ in its completion $\pair{\bcl(\til X)}{d_H}$.
\end{prop}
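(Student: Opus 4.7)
The plan is to identify $\bcl(X)$ with a subspace of $\bcl(\til X)$ described by a denseness condition, then express that condition as a countable intersection using the Baire category theorem together with a countable open base for $\til X$.

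Recall from the discussion preceding Proposition \ref{rthsdpio} that $A \in \bcl(\til X)$ belongs to (the image of) $\bcl(X)$ precisely when $A = \cl_{\til X}(A \cap X)$, i.e., when $A \cap X$ is dense in $A$. Since $X$ is Polish, classically $X = \bigcap_{\ntr} U_n$ for some open sets $U_n$ in $\til X$. Every $A \in \bcl(\til X)$ is closed in the complete space $\til X$ and hence is itself completely metrizable, so $A$ is a Baire space; consequently $A \cap X = \bigcap_n (A \cap U_n)$ is dense in $A$ if and only if each $A \cap U_n$ is dense in $A$. This reduces the task to showing that, for every open $U \subs \til X$, the set
\[
\Dee_U = \{A \in \bcl(\til X) : A \cap U \text{ is dense in } A\}
\]
is $G_\delta$ in $\bcl_H(\til X)$.

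For the second step, I fix a countable open base $\{V_i : i \in \omega\}$ of $\til X$ (which exists since $\til X$ is separable). Then $A \in \Dee_U$ iff for every $i$ with $V_i \cap A \nnempty$ one also has $V_i \cap U \cap A \nnempty$, so
\[
\Dee_U = \bigcap_{i \in \omega} \bigl( (\bcl(\til X) \setminus V_i^-) \cup (V_i \cap U)^- \bigr).
\]
Since $W^-$ is open in $\bcl_H(\til X)$ for every open $W \subs \til X$ (a standard property of the Hausdorff metric topology), each term of this intersection is the union of a closed set with an open set. In any metric space such a union is $G_\delta$ (its complement is open $\cap$ closed, hence $F_\sigma$), so $\Dee_U$ is $G_\delta$, and therefore $\bcl(X) = \bigcap_n \Dee_{U_n}$ is $G_\delta$ in $\bcl_H(\til X)$. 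I do not expect a serious obstacle: the conceptual heart of the argument is simply that the $G_\delta$ representation of $X$ inside $\til X$ transfers, via Baire category applied to each closed $A \subs \til X$, to a $G_\delta$ representation of $\bcl(X)$ inside $\bcl(\til X)$; the rest is a routine open-base manipulation using the definition of the Hausdorff metric topology.
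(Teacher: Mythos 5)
Your proposal is correct and takes essentially the same route as the paper: after complementation, your formula for $\bigcap_n \Dee_{U_n}$ is exactly the paper's decomposition $\bcl(\til X)\setminus\bcl(X)=\bigcup_{n,k}\bigl(V_n^-\setminus(V_n\cap W_k)^-\bigr)$, with the same complexity count (closed $\cup$ open being $G_\delta$ is dual to $F_\sigma$-ness of the complement). The Baire category step is also the same idea, merely applied in the complete space $A$ itself rather than, as in the paper, in the open subspace $A\cap U$ witnessing failure of density.
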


For the readers' convenience, direct short proofs of the above two propositions
are given 
in the Appendix.
Combining Fact \ref{complete-separable} and Proposition \ref{owteepgsdgfa},
 we have the following:

\begin{wn}\label{ppojnkjiu}
If $X = \pair Xd$ is Polish in which every bounded set is totally bounded, then the space $\bcl_H(X)$ is also Polish.
\end{wn}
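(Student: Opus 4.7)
The plan is to combine Proposition \ref{owteepgsdgfa} with Fact \ref{complete-separable} and the classical fact that a $G_\delta$ subspace of a Polish space is Polish. Write $\pair{\til X}d$ for the completion of $\pair Xd$ and regard $\bcl_H(X,d)$ in the usual way as a subspace of $\bcl_H(\til X,d)$.

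First, I would establish that the ambient space $\bcl_H(\til X)$ is itself Polish. Completeness is immediate from Fact \ref{complete-separable}(i), since $\til X$ carries the complete metric $d$. For separability, Fact \ref{complete-separable}(ii) reduces matters to showing that every bounded subset of $\til X$ is totally bounded. This is the only real computation in the argument: given a bounded $\til B \subs \til X$, fix some $y_0 \in X$ and $r > 0$ with $\til B \subs \bal(y_0, r)$, and observe that $B_0 := X \cap \bal(y_0, r+1)$ is bounded in $X$ and therefore totally bounded by hypothesis. Using density of $X$ in $\til X$, any $\til x \in \til B$ can be approximated to within $\eps/2 < 1/2$ by some $x \in B_0$, so a finite $(\eps/2)$-net for $B_0$ in $X$ becomes a finite $\eps$-net for $\til B$ in $\til X$.

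Second, since $X$ is assumed Polish, Proposition \ref{owteepgsdgfa} applies directly and yields that $\bcl_H(X)$ is a $G_\delta$ subset of its completion $\bcl_H(\til X)$. A $G_\delta$ subspace of a Polish space is Polish, so $\bcl_H(X)$ is Polish, finishing the proof.

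There is no serious obstacle here; the only point requiring a moment of care is propagating the ``every bounded set is totally bounded'' property from $X$ to the completion $\til X$, which is a direct density argument as sketched above. Everything else is a one-line application of results already available in the excerpt.
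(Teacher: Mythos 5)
Your proposal is correct, and it uses exactly the two ingredients the paper invokes (Fact \ref{complete-separable} and Proposition \ref{owteepgsdgfa}) together with the classical fact about $G_\delta$ subspaces; the only difference is how you combine them. You make the ambient hyperspace $\bcl_H(\til X)$ Polish, which forces you to prove the extra (correct) density lemma that the property ``every bounded set is totally bounded'' passes from $X$ to its completion $\til X$, and then quote that a $G_\delta$ subset of a Polish space is Polish. The more economical combination intended by the paper avoids that detour entirely: separability of $\bcl_H(X)$ is read off \emph{directly} from Fact \ref{complete-separable}(ii) applied to $X$ itself, since the hypothesis of the corollary is verbatim the condition there; and complete metrizability follows because, by Proposition \ref{owteepgsdgfa}, $\bcl_H(X)$ is $G_\delta$ in $\pair{\bcl(\til X)}{d_H}$, which is complete by Fact \ref{complete-separable}(i) --- a $G_\delta$ subset of a completely metrizable space is completely metrizable, with no need for separability of the ambient hyperspace. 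So your argument is sound and buys nothing extra, at the cost of one additional approximation argument; the direct route is shorter and is presumably what the authors had in mind.
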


Concerning the spaces $\nwd(X)$ and $\perf(X)$,
 we prove here the following:

\begin{prop}\label{wetgivwet}
For every separable metric space $X$, the space $\nwd(X)$ is $G_\delta$ in $\bcl_H(X)$.
\end{prop}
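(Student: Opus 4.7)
The plan is to express the complement $\bcl(X) \setminus \nwd(X)$ as a countable union of closed sets. Since $X$ is separable, it has a countable base of open sets, say $\{B_n : n \in \omega\}$, which we may assume to consist of nonempty sets.

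A closed set $A \in \bcl(X)$ fails to be nowhere dense precisely when $\Int(A) \neq \emptyset$, which happens if and only if there exists some $n$ with $B_n \subseteq A$. Since $A$ is closed, $B_n \subseteq A$ if and only if $\overline{B_n} \subseteq A$. Hence
\[
\bcl(X) \setminus \nwd(X) \;=\; \bigcup_{n \in \omega} \bigl(\overline{B_n}\bigr)^{+} \cap \bcl(X).
\]

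The remaining key step is to verify that for any closed set $C \subseteq X$, the set $C^{+} \cap \bcl(X)$ is closed in $\bcl_H(X)$. This is straightforward: if $A_k \to A$ in the Hausdorff metric with $C \subseteq A_k$ for all $k$, then for any $x \in C$ and any $\eps > 0$, eventually $A_k \subseteq \bal(A,\eps)$, so $d(x,A) < \eps$; letting $\eps \to 0$ and using closedness of $A$ gives $x \in A$. Thus each $(\overline{B_n})^{+} \cap \bcl(X)$ is closed, making the complement of $\nwd(X)$ an $F_\sigma$-set, so $\nwd(X)$ is $G_\delta$ in $\bcl_H(X)$.

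There is no real obstacle here; the only thing to watch is the replacement of the open set $B_n$ by its closure, which is legitimate precisely because the members of $\bcl(X)$ are themselves closed. (The naive attempt to use $B_n^{+}$ directly would fail, since $\{A : B_n \subseteq A\}$ is not closed in the Hausdorff topology when $B_n$ is open, as one sees from $A_k = [0, 1-1/k] \subseteq [0,1]$ with $B = (0,1)$.)
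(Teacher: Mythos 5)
Your proof is correct and follows essentially the same route as the paper: the paper fixes a countable open base $\{U_n\}$ and writes $\bcl(X)\setminus\nwd(X)=\bigcup_{n}\{A\in\bcl(X):U_n\subseteq A\}$, each such family being closed in $\bcl_H(X)$ by exactly the limit argument you give. Two small remarks: your closing parenthetical is false --- the family $\{A\in\bcl(X):B_n\subseteq A\}$ is already closed for \emph{open} $B_n$, since your own argument applies verbatim (for $x\in B_n$ one gets $\dist(x,A)=0$, hence $x\in A$ because $A$ is closed), and in your proposed counterexample $A_k=[0,1-1/k]$ does not even contain $(0,1)$, so it does not lie in the family in question; thus passing to $\overline{B_n}$ is harmless but unnecessary, and the paper indeed uses the open sets directly. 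Also beware that in this paper $C^{+}$ denotes $\{A: A\subseteq C\}$, not $\{A: C\subseteq A\}$, so the family you intend should be written out explicitly rather than as $(\overline{B_n})^{+}$.
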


\begin{proof}
Let $\ciag U$ be a countable open base for $X$. For each $\ntr$, let
$$\Ef_n=\setof{A\in\bcl(X)}{U_n\subs A}.$$
Then each $\Ef_n$ is closed in $\bcl_H(X)$ and $\bigcup_{\ntr}\Ef_n=\bcl(X)\setminus\nwd(X)$.
\end{proof}

\begin{prop}\label{oeihgwef}
If $X$ is locally compact then $\perf(X)$ is $G_\delta$ in $\bcl_H(X)$.
\end{prop}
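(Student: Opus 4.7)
The plan is to show that $\bcl(X)\setminus\perf(X)$ is $F_\sigma$ in $\bcl_H(X)$, from which the $G_\delta$ conclusion for $\perf(X)$ follows. The first step is to use local compactness to simplify the definition of perfectness: every closed subspace of a locally compact metrizable space is itself locally compact metrizable, hence completely metrizable. Therefore, for $A\in\bcl(X)$, the ``completely metrizable'' clause in the definition is automatic, so $A\in\perf(X)$ amounts to $A$ being dense-in-itself, i.e., having no isolated points. The complement $\bcl(X)\setminus\perf(X)$ is then exactly the set of $A\in\bcl(X)$ having at least one isolated point.

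Second, I would fix a countable open base $\setof{V_i}{i\in\omega}$ for $X$ (available under the separability setting implicit in this part of the paper). A point of $A$ is isolated if and only if it lies in some $V_i$ whose intersection with $A$ is a singleton, so
\[
\bcl(X)\setminus\perf(X) \;=\; \bigcup_{i\in\omega}\Ef_i, \qquad \Ef_i = \setof{A\in\bcl(X)}{\card(A\cap V_i)=1}.
\]
It suffices to show each $\Ef_i$ is $F_\sigma$. I would write $\Ef_i = U_i\setminus W_i$ with $U_i = \setof{A}{A\cap V_i\nnempty}$ and $W_i = \setof{A}{\card(A\cap V_i)\geq 2}$, and check that both are Hausdorff-open. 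For $U_i$: if $x\in A\cap V_i$ and $\bal(x,r)\subs V_i$, then any $B$ with $d_H(A,B)<r$ must contain a point within distance $r$ of $x$, hence in $V_i$. For $W_i$: given distinct $x,y\in A\cap V_i$, choose disjoint open balls $\bal(x,r),\bal(y,r)\subs V_i$ with $2r<d(x,y)$; then $d_H(A,B)<r$ forces $B$ to meet each ball, producing two distinct points of $B\cap V_i$. Hence each $\Ef_i$ is the intersection of an open set and a closed set in the metric space $\bcl_H(X)$, and since open sets in metric spaces are $F_\sigma$, so is $\Ef_i$. Taking the countable union concludes the argument.

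The step where local compactness is essential is the reduction in Step 1, absorbing the completeness clause into the free structure of closed subspaces of $X$; otherwise the descriptive complexity of the class of completely metrizable members of $\bcl(X)$ would have to be controlled separately. The only other potential obstacle -- dropping separability -- does not arise under the paper's conventions; were it to arise, one would instead need to show directly that the metric-defined sets $\setof{A}{\exists x\in A,\ A\cap\bal(x,1/n)=\sn x}$ are closed in $\bcl_H(X)$, which requires extracting a convergent subsequence of the witness points and leans on local compactness in a more delicate way.
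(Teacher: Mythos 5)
Your proof is correct, but it takes a genuinely different route from the paper's. The paper makes the same first reduction (for locally compact, hence Polish, $X$ a closed set is perfect iff it has no isolated points), but it then writes the complement of $\perf(X)$ as $\bigcup_{n,m}\bigcap_{(k,l)\in\Phi(n,m)}\bigl((\cl U_n)^-\setminus(U_k^-\cap U_l^-)\bigr)$, where the base is chosen so that each $\cl U_n$ is compact; local compactness is used there a second time, and crucially, to guarantee that $(\cl U_n)^-$ is closed in the Hausdorff metric topology (for a non-compact closed set $C$, the set $C^-$ need not be closed in $\bcl_H(X)$), so that the countable intersection over $\Phi(n,m)$ stays closed and the union is $F_\sigma$. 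You avoid that countable intersection altogether by noting that ``$A$ meets $V_i$ in at least two points'' is already an open condition, so ``exactly one point in $V_i$'' is the difference of two open sets, hence $F_\sigma$, with no compactness used anywhere in the hyperspace computation; your openness verifications for $V_i^-$ and for the two-point condition are sound. What your approach buys: it shows that the family of sets with an isolated point is $F_\sigma$ in $\bcl_H(X)$ for every separable metric $X$, local compactness entering only to make the complete-metrizability clause in the definition of perfectness automatic; in particular the same argument applies verbatim when $X$ is Polish, which actually sharpens the paper's subsequent corollary (that $\perf(X)$ is $F_{\sigma\delta}$ for Polish $X$) to $G_\delta$. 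What the paper's more elaborate formula buys is precisely that corollary by a one-line modification (replacing $(\cl U_n)^-$ by $\bal(U_n,1/m)^-$), at the cost of needing the compactness trick in the locally compact case. Your closing remark about the non-separable situation is inessential to the argument and could be omitted.
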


\begin{proof}
Let $\ciag U$ enumerate an open base of $X$ such that $\cl U_n$ is compact for every $\ntr$.
Note that, by compactness, $(\cl U_n)^-$ is closed in $\bcl_H(X,d)$.
For each $n,m\in\nat$ define
$$\Phi(n,m)=\setof{\pair kl\in\nat^2}{U_k\cap U_l=\emptyset,\ U_k\cup U_l\subs \bal(U_n,1/m)}.$$
We claim that
$$\bcl(X,d)\setminus\perf(X)=\bigcup_{n,m\in\nat}\bigcap_{\pair kl\in\Phi(n,m)} \Bigl( (\cl U_n)^-\setminus(U_k^-\cap U_l^-)\Bigr).$$
The set on the right-hand side is $F_{\sigma}$, so this will finish the proof.

Note that a closed set in a Polish space is perfect if and only if it has no isolated points.
If $A\in \bcl(X,d)\setminus\perf(X)$ then there is $y\in A$ which is isolated in $A$. We can find $n,m\in\nat$ such that $y\in U_n$ and $\bal(U_n,1/m)\cap A=\sn y$. Then $A\in (\cl U_n)^-$ and $A\notin U_k^-\cap U_l^-$ whenever $\pair kl\in\Phi(n,m)$.

Conversely, assume that there are $n,m\in\nat$ such that $A\in(\cl U_n)^-$ and $A\notin U_k^-\cap U_l^-$ for every $\pair kl\in\Phi(n,m)$. Then $A\cap \bal(U_n,1/m)\nnempty$ and the second condition says that $A\cap \bal(U_n,1/m)$ does not contain two points, so it is a singleton. Thus $A\notin\perf(X)$.
\end{proof}

Replacing $(\cl U_n)^-$ by $\bal(U_n,1/m)^-$ in the formula from the proof above, we obtain the following:

\begin{wn}
The space $\perf(X)$ is $F_{\sigma\delta}$ in\/ $\bcl_H(X)$ if $X$ is Polish.
\end{wn}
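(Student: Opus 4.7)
The plan is to follow the proof of Proposition~\ref{oeihgwef} with exactly the substitution suggested in the paragraph above the corollary: replace $(\cl U_n)^-$ by $\bal(U_n,1/m)^-$. Since $X$ is Polish, it is in particular separable, so a countable open base $\ciag U$ is available, and the auxiliary sets $\Phi(n,m)$ are defined as before.

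First I would verify that the modified identity
$$\bcl(X,d)\setminus\perf(X)=\bigcup_{n,m\in\nat}\bigcap_{\pair kl\in\Phi(n,m)} \Bigl(\bal(U_n,1/m)^-\setminus(U_k^-\cap U_l^-)\Bigr)$$
still holds. This relies on the fact that every closed $A\in\bcl(X)$ is itself Polish, as a closed subspace of a Polish space, so $A$ fails to be perfect iff it has an isolated point. Both inclusions then follow essentially verbatim from the corresponding argument in Proposition~\ref{oeihgwef}: an isolated point $y$ of $A$ with $A\cap W=\sn y$ yields indices $n,m$ with $y\in U_n$ and $\bal(U_n,1/m)\subs W$, placing $A$ in the right-hand side; conversely, lying in the right-hand side forces $A\cap\bal(U_n,1/m)$ to be a singleton.

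The remaining step is to read off the Borel class of the right-hand side. For any open $V\subs X$ the set $V^-$ is open in $\bcl_H(X)$; therefore both $\bal(U_n,1/m)^-$ and $U_k^-\cap U_l^-$ are open in $\bcl_H(X)$, and their difference is the intersection of an open set with a closed set, i.e.\ a locally closed subset of the metric space $\bcl_H(X)$. Any locally closed subset of a metric space is both $F_\sigma$ and $G_\delta$. Consequently, each intersection $\bigcap_{\pair kl\in\Phi(n,m)}$ is $G_\delta$, the countable union over $n,m$ is $G_{\delta\sigma}$, and so the complement $\perf(X)$ is $F_{\sigma\delta}$.

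The point that deserves the most care --- though it is hardly an obstacle --- is the loss of the closedness of $(\cl U_n)^-$ that came from compactness of $\cl U_n$ in Proposition~\ref{oeihgwef}. This drop from \emph{closed} to merely \emph{open} for the relevant basic pieces is exactly what causes the Borel bound to weaken from $G_\delta$ to $F_{\sigma\delta}$; everything else is a direct transcription of the previous argument.
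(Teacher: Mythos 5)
Your proof is correct and is essentially the paper's own argument: the paper proves this corollary precisely by substituting $\bal(U_n,1/m)^-$ for $(\cl U_n)^-$ in the formula from Proposition~\ref{oeihgwef}, and your verification of the identity and of the resulting Borel class (open minus open is locally closed, hence $G_\delta$, so the right-hand side is $G_{\delta\sigma}$ and its complement $\perf(X)$ is $F_{\sigma\delta}$) just makes explicit what the paper leaves to the reader.
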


Since $\Cantor(\R^m) = \perf(\R^m) \cap \nwd(\R^m)$, the following is a combination of Propositions \ref{wetgivwet} and \ref{oeihgwef}:

\begin{wn}
The space $\Cantor(\R^m)$ is $G_\delta$ in\/ $\bcl_H(\R^m)$.
\end{wn}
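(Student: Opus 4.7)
The plan is to derive this as an immediate consequence of Propositions \ref{wetgivwet} and \ref{oeihgwef}. First I would establish the set equality
\[
\Cantor(\R^m) = \perf(\R^m) \cap \nwd(\R^m)
\]
inside $\bcl(\R^m)$. Once this identity is in hand, Proposition \ref{wetgivwet} gives that $\nwd(\R^m)$ is $G_\delta$ in $\bcl_H(\R^m)$, and Proposition \ref{oeihgwef} (applicable since $\R^m$ is locally compact) gives that $\perf(\R^m)$ is $G_\delta$ in $\bcl_H(\R^m)$. A finite intersection of $G_\delta$ sets is again $G_\delta$, so the conclusion follows at once.

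For the set equality, the forward inclusion is clear: every Cantor set in $\R^m$ is compact, perfect (no isolated points), and, being zero-dimensional, has empty interior in $\R^m$. For the reverse inclusion, take $A \in \perf(\R^m) \cap \nwd(\R^m)$. Since $\R^m$ is locally compact, $A$ is a nonempty compact metric space, and by hypothesis it has no isolated points. By Brouwer's characterization of the Cantor set, it then suffices to verify that $A$ is totally disconnected.

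The main obstacle lies precisely in this total-disconnectedness step. For $m=1$ it is immediate, since connected subsets of $\R$ are intervals and a nowhere-dense subset of $\R$ cannot contain a non-degenerate interval. For $m \goe 2$, however, nowhere-density alone does not force total disconnectedness, as shown by a closed line segment in $\R^2$, so a small supplement is required. The cleanest fix is to observe that the family of totally disconnected compacta is itself $G_\delta$ in $\bcl_H(\R^m)$, using the characterization that a compact $A \subs \R^m$ is totally disconnected iff for each $n \in \omega$ it admits a cover by finitely many pairwise disjoint open sets of $\R^m$ of diameter less than $1/n$; this is an open condition in $\bcl_H(\R^m)$, and intersecting over $n$ yields a $G_\delta$. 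With this added, the desired equality becomes $\Cantor(\R^m) = \perf(\R^m) \cap \nwd(\R^m) \cap \mathrm{TotDisc}(\R^m)$, a triple intersection of $G_\delta$ sets, and the corollary follows exactly as before.
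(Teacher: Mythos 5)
Your argument is correct, but it does not follow the paper's route, and it is in fact more careful than the paper at precisely the point you flag. The paper deduces the corollary in one line from the asserted identity $\Cantor(\Rm)=\perf(\Rm)\cap\nwd(\Rm)$ together with Propositions \ref{wetgivwet} and \ref{oeihgwef}; no third ingredient appears there. As you observe, that identity is valid only for $m=1$ (where a nowhere dense closed set contains no interval, hence no nondegenerate continuum): for $m\goe 2$ a closed segment in $\Rm$ is bounded, perfect and nowhere dense but is not a Cantor set, so the paper's stated equality is too generous and its one-line justification has a gap, even though the corollary itself is true. Your supplement closes that gap: the family of totally disconnected compacta is $G_\delta$ in $\bcl_H(\Rm)$, because the condition ``$A$ is covered by finitely many pairwise disjoint open subsets of $\Rm$ of diameter $<1/n$'' is Hausdorff-open (by compactness the union $V$ of the covering sets contains $\bal(A,\eps)$ for some $\eps>0$, so any $B$ with $d_H(A,B)<\eps$ lies in $V$ and is covered by the same sets), a compact set is totally disconnected iff it satisfies this condition for every $n$, and Brouwer's characterization then identifies the triple intersection with $\Cantor(\Rm)$. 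Two minor remarks: once total disconnectedness is included, $\nwd(\Rm)$ is actually redundant (a zero-dimensional closed set has empty interior), though keeping it is harmless; and compactness of $A$ comes from Heine--Borel (bounded and closed in $\Rm$) rather than from local compactness as such. What your route buys is a justification valid for all $m$; what the paper's buys is brevity, and it is literally correct only when $m=1$, consistent with the later formula $\Cantor(\R)=\perf(\R)\cap\nwd(\R)\cap\bcl(\R)$ used in the section on $\cld_H(\R)$.
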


Now, we shall prove the following:

\begin{prop}
The space $\En(\R^m)$ is Polish.
\end{prop}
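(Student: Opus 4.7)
The plan is to realize $\En(\Rm)$ as a $G_\delta$ subset of $\bcl_H(\Rm)$; since $\bcl_H(\Rm)$ is Polish (by Corollary~\ref{ppojnkjiu}, because $\Rm$ is Polish and every bounded set in $\Rm$ is totally bounded), and since a $G_\delta$ subset of a Polish space is Polish, this will finish the proof.

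The crux of the argument is to show that the Lebesgue measure functional $\lambda\colon\bcl_H(\Rm)\to[0,\infty)$ is upper semicontinuous. The key estimate is that $d_H(A,B)<\eps$ forces $B\subs\bal(A,\eps)$ and hence $\lambda(B)\loe\lambda(\bal(A,\eps))$. Consequently, whenever $A_n\to A$ in $\bcl_H(\Rm)$ and $\eps>0$,
$$\limsup_n \lambda(A_n)\loe\lambda(\bal(A,\eps)).$$
Since $A$ is closed, $\bigcap_{\eps>0}\bal(A,\eps)=A$; since $A$ is bounded, $\lambda(\bal(A,1))<\infty$. Continuity of Lebesgue measure from above therefore gives $\lambda(\bal(A,\eps))\to\lambda(A)$ as $\eps\to 0^+$, so $\limsup_n\lambda(A_n)\loe\lambda(A)$, which is the desired upper semicontinuity.

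Once upper semicontinuity is established, each set $\setof{A\in\bcl(\Rm)}{\lambda(A)<1/k}$ is open in $\bcl_H(\Rm)$, and
$$\En(\Rm)=\setof{A\in\bcl(\Rm)}{\lambda(A)=0}=\bigcap_{k\in\nat}\setof{A\in\bcl(\Rm)}{\lambda(A)<1/k}$$
is the required $G_\delta$ set. The only mildly subtle point is the upper semicontinuity of $\lambda$; all the other steps are routine. (Note that $\lambda$ is in general \emph{not} continuous on $\bcl_H(\Rm)$, as finite $1/n$-nets of $[0,1]$ show; upper semicontinuity is the best one can hope for and, fortunately, all that is needed.)
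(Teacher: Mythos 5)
Your proof is correct, but it runs along a different track than the paper's. You establish that the Lebesgue measure functional $\lambda$ is upper semicontinuous on $\bcl_H(\Rm)$ (via $B\subs\bal(A,\eps)$ when $d_H(A,B)<\eps$, plus continuity of measure from above applied to $\bigcap_{\eps>0}\bal(A,\eps)=A$, which uses exactly the closedness and boundedness of $A$), and then write $\En(\Rm)=\bigcap_k\setof{A}{\lambda(A)<1/k}$ as a $G_\delta$ in the Polish space $\bcl_H(\Rm)$. The paper instead exhibits an explicit $G_\delta$ representation without mentioning the measure functional: enumerating the open rational cubes $I_n$ and letting $S_k$ be the finite index sets with $\sum_{n\in s}|I_n|<1/k$, it shows $\En(\Rm)=\bigcap_k\bigcup_{s\in S_k}\bigl(\bigcup_{n\in s}I_n\bigr)^+$, where each $\bigl(\bigcup_{n\in s}I_n\bigr)^+$ is open because the elements of $\bcl(\Rm)$ are compact; compactness is also what lets the paper refine a countable cover of a null set to a finite one. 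The two arguments are genuinely different in mechanism though they meet at the same final step ($G_\delta$ in Polish is Polish). Yours is conceptually cleaner and yields the slightly stronger byproduct that every set $\setof{A}{\lambda(A)<c}$ is open; the paper's is more elementary and concrete, avoiding continuity from above and producing the open sets directly as unions of basic sets of the form $U^+$, in the same style as the other Borel-class computations of that section. Both uses of the hypotheses are sound, so there is nothing to fix.
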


\begin{proof}
Let $\ciag I$ enumerate all open rational cubes (i.e. products of rational intervals) in $\Rm$. Given $k\in\nat$, we define
$$S_k= \Bigl\{s\in\fin\nat : \sum_{n\in s}|I_n|<1/k\Bigr\},$$
where $|I|$ denotes the volume of the cube $I\subs \Rm$.
We claim that
$$\En(\R^m)=\bigcap_{k\in\nat}\bigcup_{s\in S_k}\Bigl(\bigcup_{n\in s}I_n\Bigr)^+.$$
Clearly, if $A$ belongs to the right-hand side then for each $k\in\nat$ there is $s\subs\nat$ such that $A\subs\bigcup_{n\in s}I_n$ and $\sum_{n\in s}|I_n|<1/k$; therefore $A$ has Lebesgue measure zero.

Assume now $A$ has Lebesgue measure zero and fix $k<\nat$. Then $A\subs\bigcup_{\ntr}J_n$, where each $J_n$ is an open rational cube and $\sum_{\ntr}|J_n|<1/k$. By compactness, $A\subs J_0\cup\dots\cup J_{l-1}$ for some $m$ and $\{J_0,\dots,J_{l-1}\}=\setof{I_n}{n\in s}$ for some $s\in S_k$. Thus $A\in\bigcup_{s\in S_k}(\bigcup_{n\in s}I_n)^+$.
\end{proof}

\section{Almost convex metric spaces}\label{whfijfpiapfi}

Recall that a metric $d$ on $X$ is {\em almost convex} if for every $\alpha>0$, $\beta>0$ and for every $x,y\in X$ such that $d(x,y)<\alpha+\beta$, there exists $z\in X$ with $d(x,z)<\alpha$ and $d(z,y)<\beta$.

Fix a dense set $X$ in a separable Banach space $E$. Let $d$ denote the metric on $X$ induced by the norm of $E$. Then $\pair Xd$ is an almost convex metric space and therefore by a result of \cite{CK} the space $\bcl(X,d)$ is an absolute retract. In case where $X$ is $G_\delta$, the space $\bcl(X,d)$ is completely metrizable by Proposition \ref{owteepgsdgfa}. If additionally $E$ is finite-dimensional then $\bcl(X,d)$ is Polish by Corollary \ref{ppojnkjiu}. In case where $X$ is $\sigma$-compact, by Proposition \ref{rthsdpio}, $\bcl(X,d)$ is absolutely $F_{\sigma\delta}$. It is natural to ask whether these spaces or their subspaces, discussed in \S\ref{borelclasses}, are homeomorphic to some standard spaces. Such standard spaces appear as homotopy dense subspaces of the Hilbert cube \Q.

Let $\uclb(X,d)$ denote the family of all sets of the form $\clbal(C,t)$, the closed $t$-neighborhood of $C\in\bcl(X,d)$, where $t>0$.

\begin{prop}\label{sdgeriphgpwo}
If $\pair Xd$ is an almost convex metric space then the subspace $\uclb(X,d)$ is homotopy dense in $\bcl(X,d)$.
\end{prop}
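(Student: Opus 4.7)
The plan is to exhibit an explicit homotopy. I would define $h\colon \bcl(X,d)\times[0,1]\to\bcl(X,d)$ by $h(A,t) = \clbal(A,t)$. First I would check that this is well defined: $\clbal(A,t)$ is closed since $x\mapsto\dist(x,A)$ is $1$-Lipschitz, nonempty since it contains $A$, and bounded with $\diam(\clbal(A,t))\le\diam A+2t$. At $t=0$, $\clbal(A,0)=\{x\in X:\dist(x,A)=0\}=A$ because $A$ is closed in $X$, so $h_0=\id$. For $t>0$, $h_t(A)\in\uclb(X,d)$ by the very definition of $\uclb(X,d)$. So the only nontrivial thing to verify is the continuity of $h$.

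For continuity I would establish the Lipschitz-type estimate
$$d_H(\clbal(A,t),\clbal(B,s))\le d_H(A,B)+|t-s|,$$
and this is where almost convexity enters. Set $r:=d_H(A,B)+|t-s|$, fix $\eta>0$, and take $x\in\clbal(A,t)$. Choose $a\in A$ with $d(x,a)<t+\eta/4$ and then $b\in B$ with $d(a,b)<d_H(A,B)+\eta/4$, so that $d(x,b)<t+d_H(A,B)+\eta/2\le s+r+\eta/2$. If $s=0$ this already gives $x\in\bal(B,r+\eta)=\bal(\clbal(B,0),r+\eta)$. If $s>0$, I would apply almost convexity to the inequality $d(x,b)<(r+\eta/2)+s$ with the positive summands $\alpha:=r+\eta/2$ and $\beta:=s$: this yields $z\in X$ with $d(x,z)<r+\eta/2<r+\eta$ and $d(z,b)<s$, so $z\in\clbal(B,s)$. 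Thus in either case $x\in\bal(\clbal(B,s),r+\eta)$, and since $x$ and $\eta$ are arbitrary, together with the symmetric inclusion, the Lipschitz estimate follows. Continuity of $h$ is then immediate.

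The main obstacle is exactly the inward-correction step in the continuity argument: when $s<t$, the straightforward bound only gives $d(x,B)\le t+d_H(A,B)$ for $x\in\clbal(A,t)$, which does not by itself produce a point $z$ close to $x$ lying in the smaller set $\clbal(B,s)$. Almost convexity is precisely what supplies such an intermediate $z$, and everything else in the proof is routine.
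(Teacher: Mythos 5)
Your proof is correct and follows essentially the same route as the paper: the same homotopy $h(A,t)=\clbal(A,t)$, with almost convexity used exactly where the paper uses it, to pull the point $x$ back inside the smaller neighborhood $\clbal(B,s)$. The only cosmetic difference is that you prove the combined estimate $d_H(\clbal(A,t),\clbal(B,s))\loe d_H(A,B)+|t-s|$ in one step (also handling the boundary case $s=0$ explicitly), whereas the paper splits it by the triangle inequality into the known bound $d_H(\clbal(A,t),\clbal(A,s))\loe|t-s|$ and the inequality $d_H(\clbal(A,s),\clbal(B,s))\loe d_H(A,B)$.
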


\begin{proof}
Define a homotopy $\map h{\bcl(X,d)\times[0,1]}{\bcl(X,d)}$ by the formula:
$$h(A,t)=\clbal(A,t).$$
It suffices to verify the continuity of $h$ with respect to Hausdorff metric topology. It has been checked in \cite{CK} that $d_H(\clbal(A,t),\clbal(A,s))\loe|t-s|$. Thus we have
\begin{align*}
d_H(h(A,t),h(B,s))&\loe d_H(h(A,t),h(A,s))+d_H(h(A,s),h(B,s))\\
&\loe |t-s|+d_H(h(A,s),h(B,s)).
\end{align*}
It remains to check that $d_H(\clbal(A,s),\clbal(B,s))\loe d_H(A,B)$.

To complete the proof, we show the following:
$$r>d_H(A,B),\ \eps>0 \Longrightarrow
 r+\eps\goe d_H(\clbal(A,s),\clbal(B,s)),$$
For this aim, it suffices to check that $\clbal(A,s)\subs \bal(\clbal(B,s),r+\eps)$; then by symmetry we shall also get $\clbal(B,s)\subs \bal(\clbal(A,s),r+\eps)$.

For each $x\in \clbal(A,s)$, choose $a\in A$ such that $d(x,a)<s+\eps$. There is $b\in B$ such that $d(a,b)<r$. Then we have $d(x,b)<s+r+\eps$. Using the almost convexity of $d$, we can find $y$ such that $d(b,y)<s$ and $d(y,x)<r+\eps$. Then $y\in\bal(B,s)$ and hence $x\in\bal(y,r+\eps)\subs\bal(\clbal(B,s),r+\eps)$.
\end{proof}

Denote by $\reg(X,d)$ the hyperspace of all nonempty bounded regularly closed subsets of a metric space $\pair Xd$.
Clearly, $\uclb(X,d)\subs\reg(X,d)$.

\begin{wn}\label{owehfafafs}
Let $\pair Xd$ be an almost convex metric space and $D\subs X$ a dense set. Then the spaces $\reg(X,d)$ and $\bcl(D,d)$ are homotopy dense in $\bcl(X,d)$.
\end{wn}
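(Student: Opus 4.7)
The strategy is to reuse the single homotopy $h(A,t) = \clbal(A,t)$ already constructed in Proposition~\ref{sdgeriphgpwo}, and to refine the observation that $h_t$ lands in $\uclb(X,d)$ for $t>0$ by showing that $\uclb(X,d)$ is in fact contained in the two smaller hyperspaces of interest.

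First I would verify that for every $t>0$ and every $A\in\bcl(X,d)$ the closed ball $\clbal(A,t)$ is regularly closed, i.e., equals $\cl_X(\bal(A,t))$. The inclusion $\sups$ is immediate. For the reverse inclusion, pick $x$ with $d(x,A)\loe t$ and $\eps>0$. If $d(x,A)<t$ then $x\in\bal(A,t)$; otherwise choose $a\in A$ with $d(x,a)<t+\eps/3$, write this as $(t-\eps/3)+2\eps/3$, and invoke the almost convexity of $d$ to produce $y$ with $d(a,y)<t-\eps/3$ and $d(y,x)<2\eps/3<\eps$. Then $y\in\bal(A,t)$, giving $x\in\cl_X(\bal(A,t))$. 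This proves $\uclb(X,d)\subs\reg(X,d)$.

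Next, for any $C\in\reg(X,d)$, the set $\Int_X C$ is open, hence its intersection with the dense set $D$ is dense in $\Int_X C$. Taking closures gives
\[
\cl_X(C\cap D)\sups\cl_X(D\cap\Int_X C)=\cl_X(\Int_X C)=C,
\]
so $C=\cl_X(C\cap D)$. Since $C$ is bounded, so is $C\cap D$, and under the canonical isometric identification $\bcl(D,d)\hookrightarrow\bcl(X,d)$ sending $B$ to $\cl_X B$, the set $C$ is the image of $C\cap D\in\bcl(D,d)$. Hence $\reg(X,d)\subs\bcl(D,d)$ as subsets of $\bcl(X,d)$.

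Combining the two inclusions, the chain
\[
\uclb(X,d)\ \subs\ \reg(X,d)\ \subs\ \bcl(D,d)\ \subs\ \bcl(X,d)
\]
together with Proposition~\ref{sdgeriphgpwo} finishes the proof: the homotopy $h$ satisfies $h_0=\id$ and $h_t(\bcl(X,d))\subs\uclb(X,d)$ for $t>0$, so it witnesses the homotopy density of both $\reg(X,d)$ and $\bcl(D,d)$ simultaneously. The only nonroutine step is the first — using almost convexity to show that every closed ball coincides with the closure of the corresponding open ball — and that is a short application of the definition; the rest is essentially bookkeeping around the identification $\bcl(D,d)\subs\bcl(X,d)$.
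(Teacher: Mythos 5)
Your proof is correct and follows essentially the same route as the paper: both rest on the chain $\uclb(X,d)\subs\reg(X,d)\subs\bcl(D,d)$ together with the homotopy $h(A,t)=\clbal(A,t)$ from Proposition~\ref{sdgeriphgpwo}, with the second inclusion coming from $\cl_X(D\cap U)=\cl_X U$ for open $U$. The only difference is that you spell out, via almost convexity, why $\clbal(A,t)$ is regularly closed (i.e.\ the inclusion $\uclb(X,d)\subs\reg(X,d)$), a point the paper dismisses as clear; this is a welcome detail, since that inclusion can fail without almost convexity, but it does not change the argument.
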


\begin{proof}
Regarding $\bcl(D,d)\subs\bcl(X,d)$ via the embedding $A\mapsto \cl_X A$, we have $\reg(X,d)\subs\bcl(D,d)$. This follows from the fact that $\cl(D\cap U)=\cl U$ for every open set $U\subs X$. Since $\uclb(X,d)$ is homotopy dense in $\bcl(X,d)$ by Proposition \ref{sdgeriphgpwo} and $\uclb(X,d)\subs\reg(X,d)$, we have the result.
\end{proof}

\section{Strict deformations}

Assume we are looking at certain homotopy dense subspaces of the Hilbert cube \Q. Let $X \sups X_0$ be such spaces. If $X_0 \topiso \pseudobd$ then, in order to conclude that $\pair\Q X\topiso \pair\Q\pseudobd$, it suffices to check that $X$ is a \Zsig-set in \Q, by applying \cite[Theorem 6.6]{Chapman}.
However, to see that $X_0 \topiso \pseudobd$, we have to check that $X_0$ is strongly $\Emm_0$-universal. Below is a tool which simplifies this step. To formulate it, we need some extra notions concerning homotopies.

A homotopy $\map\phi{X\times [0,1]}X$ is called a {\em strict deformation} if $\phi_0=\id$ and
$$\phi(x,t)=\phi(x',t')\land t>0\land t'>0\implies x=x'.$$
It is said that $\phi$ {\em omits} $A\subs X$ if $\img\phi{X\times(0,1]}\cap A=\emptyset$. Finally, we say that a space $X$ is {\em strictly homotopy dense} in $Y$ if $X\subs Y$ and there exists a strict deformation which omits $Y\setminus X$ (so in particular $X$ is homotopy dense in $Y$).

\begin{lm}\label{mbysld}
For every $\Z$-set $A$ in a $\Q$-manifold $M$, there exists a strict deformation of $M$ which omits $A$.
\end{lm}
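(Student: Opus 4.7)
The plan is to use the $\Q$-manifold structure of $M$ to bring $A$ into a standard position, and then construct the strict deformation by exploiting the extra Hilbert cube factor supplied by the stability theorem.

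First I would invoke the stability theorem for $\Q$-manifolds to fix a homeomorphism $\eta \colon M \to M \times \Q$, and then apply Chapman's $\Z$-set unknotting theorem to adjust $\eta$ so that $\eta(A) \subseteq M \times \{0_\Q\}$, where $0_\Q$ is a fixed basepoint of $\Q$. Under this identification it suffices to produce a strict deformation $\psi$ of $M \times \Q$ that omits the entire slice $M \times \{0_\Q\}$; transferring back through $\eta$ then yields the required $\phi$, since any strict deformation omitting $M \times \{0_\Q\}$ in particular omits $\eta(A)$.

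For the reduced problem I plan to set $\psi((x, q), t) = (x, \tau(x, q, t))$ for a continuous map $\tau \colon M \times \Q \times [0, 1] \to \Q$ satisfying (a) $\tau(x, q, 0) = q$, (b) $\tau(x, q, t) \neq 0_\Q$ for every $t > 0$, and (c) the time-$t$ map $(x, q) \mapsto \tau(x, q, t)$ is injective for each $t > 0$. Granting (a)--(c), one checks directly that $\psi_0 = \id$, that $\psi_t$ omits $M \times \{0_\Q\}$ for $t > 0$, and that $\psi$ is strict: the equality $\psi((x,q),t) = \psi((x',q'),t')$ with $t, t' > 0$ forces $x = x'$ and $\tau(x, q, t) = \tau(x', q', t')$, hence $(x, q) = (x', q')$ by (c).

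The hard part will be constructing $\tau$ so that (a)--(c) hold simultaneously. The naive attempt of perturbing $q$ by a term encoding $(x,q)$ breaks (b), since the perturbation can accidentally cancel $q$ at some coordinates. To get around this I plan to write $\Q \approx \Q \times \Q$ via odd and even coordinates of $\Q = [-1,1]^\omega$, leave one factor of $q$ unchanged, and perturb the other factor by a term of the form $\delta(t)\,\iota(x, q)$, where $\iota \colon M \times \Q \to \Q$ is a continuous injection and $\delta \colon [0,1] \to [0,1]$ vanishes only at $t = 0$. Injectivity of $\iota$ delivers (c), while (b) is enforced by arranging $\iota$ coordinate by coordinate so that its nonzero entries land in positions where the corresponding entry of $q$ is already nonzero; the infinite-dimensionality of $\Q$ provides the necessary room. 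This simultaneous coordinate-wise verification is the main technical obstacle of the proof.
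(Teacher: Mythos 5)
Your reduction to a skew-product deformation of $M\times\Q$ is appealing, but as written it has two genuine gaps, and they sit exactly where the content of the lemma lies. First, conditions (a)--(c) do not imply strictness. Strictness demands that the point be recoverable from its image at an \emph{unknown} positive time: from $\psi((x,q),t)=\psi((x',q'),t')$ with $t,t'>0$ you get $x=x'$ and $\tau(x,q,t)=\tau(x,q',t')$, but (c) only asserts injectivity of $q\mapsto\tau(x,q,t)$ for each \emph{fixed} $t$, so the case $t\ne t'$ is simply not covered; the deduction ``hence $(x,q)=(x',q')$ by (c)'' is invalid. Your own formula illustrates the failure: from a value of the form $q''+\delta(t)\,\iota(x,q)$ one cannot separate $\delta(t)$ from $\iota(x,q)$, so distinct pairs $(q,t)\ne(q',t')$ can have the same image. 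What you need is joint injectivity in the $q$-variable of $(q,t)\mapsto\tau(x,q,t)$ on $\Q\times(0,1]$, e.g.\ by additionally encoding $t$ readably in the image, and that is not in the plan. Second, condition (b) actually fails for any perturbation of the proposed shape: writing $q=(q',q'')$ and $\tau(x,q,t)=(q',\,q''+\delta(t)\iota(x,q))$, the self-map $q''\mapsto-\delta(t)\,\iota(x,(0_\Q,q''))$ of the Hilbert cube has a fixed point by Tychonoff's fixed-point theorem, and at that point the time-$t$ image is exactly $0_\Q$. The coordinate-placement remedy you sketch cannot be carried out continuously in $q$ and collapses precisely at $q=0_\Q$ (all coordinates vanish there, forcing $\iota(x,0_\Q)=0$), which is the slice the deformation must leave instantly. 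A smaller, fixable point: to push $\eta(A)$ into the slice by \Z-set unknotting you need a \Z-embedding of $A$ into $M\times\{0_\Q\}$ properly homotopic to the inclusion, and $a\mapsto(\pr_M\eta(a),0_\Q)$ need not be injective; one should instead use $M\topiso M\times\Q\times\Q$ and $a\mapsto(\eta(a),0_\Q)$.

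For comparison, the paper avoids any explicit fiberwise formula. It chooses \Z-embeddings $\map{f_n}MM$ properly $2^{-n-2}$-homotopic to $\id_M$ whose images are pairwise disjoint and miss $A$, stacks the connecting proper homotopies into a homotopy $g$ with $g_0=\id$ and $g_{2^{-n}}=f_n$, and then uses the strong $\Emm_0$-universality of $M$ to correct $g$ inductively so that it is a \Z-embedding on each block $M\times[2^{-n-1},1]$ with image disjoint from $A$; the limit is injective on $M\times(0,1]$, so strictness comes from the embedding property on each time block rather than from decoding the image. If you want to salvage your route, you must build that kind of control into $\tau$ by hand (reserve coordinates recording $t$, prevent the image from ever reaching the avoided slice, and prove joint injectivity for $t>0$), which in effect reconstructs the argument the paper gets from strong universality.
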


\begin{proof}
Find a \Z-embedding $\map{f_0}MM$
 which is properly $2^{-2}$-homotopic to the identity
 and so that $\img{f_0}M\cap A=\emptyset$.
Further, find a \Z-embedding $\map{f_1}MM$
 which is properly $2^{-3}$-homotopic to the identity
 and $\img{f_1}M\cap(\img{f_0}M\cup A)=\emptyset$.
Continuing this way,
 we find \Z-embeddings $\map{f_n}MM$, $\ntr$,
 such that $f_n$ is properly $2^{-n-2}$-homotopic to the identity and
$$\img{f_n}M\cap (\img{f_{n-1}}M\cup \dots\cup \img{f_0}M\cup A)=\emptyset.$$
Then, we have proper $2^{-(n+1)}$-homotopies $\map{g^n}{M\times[0,1]}M$,
 $\ntr$, such that $g^n_0 = f_n$ and $g^n_1 = f_{n+1}$.
We can define a homotopy $\map{g}{M\times[0,1]}M$ by
 $g(x,0) = x$ and
$$g(x,t) = g^n(x,2 - 2^{n+1}t)
 \;\text{ for $2^{-(n+1)} \leqslant t \leqslant 2^{-n}$, $\ntr$.}$$
Note that $g_{2^{-n}} = f_n$ for each $\ntr$,
 each $g\rest M\times[2^{-n-1},2^{-n}]$ is proper
 and $2^{-n-1}$-close to the projection $\map{\pr_M}{M\times(0,2^{-n}]}M$.
The continuity of $g$ at $(x,0)$ is guaranteed by the last fact.
Using the strong $\Emm_0$-universality of $M$
 (see \cite[Theorem 1.1.26]{BRZ}),
 we can inductively obtain $\map{h_n}{M\times[0,1]}M$, $\ntr$, such that
\begin{enumerate}
\item
 $h_n\rest M\times[2^{-n-1},1]$ is a $Z$-embedding,
\item
 $h_n\rest M\times[2^{-n},1] = h_{n-1}\rest M\times[2^{-n},1]$,
\item
 $h_n\rest M\times[0,2^{-n-1}] = g\rest M\times[0,2^{-n-1}]$,
\item
 $h_n\rest M\times[2^{-n-1},2^{-n}]$ is $2^{-n-1}$-close
 to $g\rest M\times[2^{-n-1},2^{-n}]$,
 hence it is $2^{-n}$-close to $\map{\pr_M}{M\times[2^{-n-1},2^{-n}]}M$,
\item
 $\img{h_n}{M\times[2^{-n-1},1]}$ is disjoint from $A$.
\end{enumerate}
Finally, the limit $h = \lim_{n\to\infty} h_n$ is the desired one.
\end{proof}

\begin{tw}\label{ejgrpio}
Assume that $X$ is a \Zsig-subset of a $\Q$-manifold $M$ which is strictly homotopy dense in $M$. Then $X$ is an $\Emm_0$-absorbing space. In particular, if $M\topiso\Q$ then $\pair MX\topiso\pair\Q\pseudobd$ and if $M\topiso\Qmp$ then $\pair MX\topiso \pair{\Qmp}{\pseudobd}$.
\end{tw}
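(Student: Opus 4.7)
The plan is to verify the four requirements of an $\Emm_0$-absorbing space in turn. Three of them are immediate. Since a strict deformation is in particular a deformation witnessing that $X$ is homotopy dense in $M$, Proposition~\ref{wetafqwtrqf} gives that $X$ is an ANR in which every $\Z$-set is strong and that $X$ is a strong $\Zsig$-space (taking $X$ itself as the ambient $\Zsig$-set in $M$). Moreover, the $\Q$-manifold $M$ is a separable locally compact metric space, hence $\sigma$-compact; every $\Z$-set in $M$ is closed and therefore $\sigma$-compact, and so the $\Zsig$-subset $X$ is $\sigma$-compact as well, giving $X\in\sigma\Emm_0$.

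The main task is to establish strong $\Emm_0$-universality. By Fact~\ref{univ-strong} it suffices to show that every open subspace $U\subs X$ is $\Emm_0$-universal. Fix such a $U$, write $U=X\cap V$ with $V$ open in $M$, and let $f:K\to U$ be a map from a compact metric space $K$ together with a control function $\alpha:U\to(0,1)$. The idea is to approximate $f$ first by a nice embedding into $M$ and then push it into $X$ via the strict deformation $\phi$. Using the strong $\Emm_0$-universality of the $\Q$-manifold $M$ (see \cite[Theorem 1.1.26]{BRZ}), choose a $\Z$-embedding $f':K\to V$ with $d(f'(k),f(k))<\alpha(f(k))/3$, and then pick $t>0$ so small that $g:=\phi_t\circ f'$ still maps $K$ into $V$ with $d(g(k),f(k))<\alpha(f(k))$. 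Since $\phi_t(M)\subs X$, we have $g(K)\subs U$; and since $\phi_t$ is injective on $M$ by the strict property and $f'$ is injective, $g$ is a topological embedding.

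The technical heart of the argument is to show that $g(K)$ is a $\Z$-set in $U$, and this is where the strictness of $\phi$ earns its keep. By Fact~\ref{Z-set} it suffices to approximate any map $\gamma:[0,1]^n\to U$ by maps into $U\setminus g(K)$. Since $f'(K)$ is a $\Z$-set in $M$, choose $\beta:M\to M\setminus f'(K)$ uniformly close to $\id_M$, and for a small parameter $s>0$ consider $\phi_s\circ\beta\circ\gamma:[0,1]^n\to X$. Any equality $\phi_s(\beta(\gamma(x)))=\phi_t(f'(y))$ with $s,t>0$ would force, by strictness, $\beta(\gamma(x))=f'(y)\in f'(K)$, contradicting the choice of $\beta$; hence the composition misses $g(K)$. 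For $s$ and the approximation parameter for $\beta$ sufficiently small, the resulting map is close to $\gamma$ and remains inside the open set $U$. Thus $g$ is a $\Z$-embedding approximating $f$, so $U$ is $\Emm_0$-universal, and therefore $X$ is strongly $\Emm_0$-universal.

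Combining these conclusions with the properties established in the first paragraph, $X$ is an $\Emm_0$-absorbing space. The ``in particular'' statements then follow from Fact~\ref{3e4gwdfs}: if $M\topiso\Q$, then after identifying $M$ with $\Q$, $X$ is a homotopy dense $\Emm_0$-absorbing subspace of $\Q$, so $\pair MX\topiso\pair\Q\pseudobd$; if $M\topiso\Qmp$, then since $\{0\}$ is a $\Z$-set in $\Q$, the space $\Qmp$ is itself homotopy dense in $\Q$, hence so is $X$, and the second clause of Fact~\ref{3e4gwdfs} yields $\pair MX\topiso\pair\Qmp\pseudobd$.
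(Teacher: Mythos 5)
Your first paragraph is fine, the verification that every open subspace $U\subs X$ is $\Emm_0$-universal is essentially sound (the strictness argument for avoiding $g(K)$ works), and the deduction of the two pair statements from Fact~\ref{3e4gwdfs} matches the paper. The gap is the reduction itself: Fact~\ref{univ-strong} requires the class $\Cee$ to be \emph{open} and closed hereditary, and the class $\Emm_0$ of compact metrizable spaces is not open hereditary (an open subspace of a compactum is in general not compact, e.g.\ $(0,1)\subs[0,1]$). So the step ``by Fact~\ref{univ-strong} it suffices to show that every open subspace of $X$ is $\Emm_0$-universal'' is not justified by anything stated in the paper, and you offer no substitute argument; it is perhaps telling that the paper itself never invokes Fact~\ref{univ-strong} in this proof.

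This is not a cosmetic citation issue, because the whole difficulty of strong $\Emm_0$-universality is the relative condition: given a compact pair with closed $B\subs A$ and $f\rest B$ already a \Z-embedding, one must approximate $f$ by \Z-embeddings $h$ satisfying $h\rest B=f\rest B$. Your construction $g=\phi_t\circ f'$ moves every point for $t>0$ (a strict deformation need not fix anything), so it cannot respect $f\rest B$; once the appeal to Fact~\ref{univ-strong} is removed, the relative case is simply never addressed. The paper attacks exactly this point directly: it chooses, via the strong $\Emm_0$-universality of $M$, a \Z-embedding $g$ with $\img{g}{A\setminus B}\cap X=\emptyset$, uses Lemma~\ref{mbysld} to produce a strict deformation of $M$ omitting $\img{f}{B}$, composes with it scaled by a function vanishing exactly on $B$, and then applies the given strict deformation omitting $M\setminus X$, the strictness of both deformations being what yields injectivity of the resulting map $h$ with $h\rest B=f\rest B$. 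To salvage your route you would have to prove (or correctly cite) a local-to-global universality criterion valid for the non-open-hereditary class $\Emm_0$; as written, the proposal does not establish strong $\Emm_0$-universality.
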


\begin{proof}
The assumption says in particular that $X$ is homotopy dense in $M$, so it follows from Proposition \ref{wetafqwtrqf} that $X$ is an ANR being a strong \Zsig-space. It remains to check that $X$ is strongly $\Emm_0$-universal. For the additional statement, we can just apply Fact \ref{3e4gwdfs}.

Fix a map $\map fAX$ of a compact metric space such that $f\rest B$ is a \Z-embedding, where $B\subs A$ is closed. Note that every compact subset of $X$ is a \Z-set in $M$, hence it is a \Z-set in $X$ by Fact \ref{sedgfasf} (ii), so we just have to preserve $f\rest B$, not worrying about \Z-sets. We assume that $A$ is endowed with the metric such that $\diam(A)\loe1$. Fix $\eps>0$. Using the strong $\Emm_0$-universality of $M$ (see \cite[Theorem 1.1.26]{BRZ}), we can find a \Z-embedding $\map gAM$ which is $\eps/2$-close to $f$ and such that $\img g{A\setminus B}\cap X=\emptyset$ (here we use the fact that $X$ is a $\Z_{\sigma}$-set in $M$~and also that $\img fB$ is a \Z-set in $M$).

By Lemma \ref{mbysld}, we have a strict deformation $\map \phi{M\times[0,1]}M$ which omits $\img fB$. Fix a metric $d$ for $M$ and choose a map $\map\gamma A{[0,1]}$ so that $\gamma^{-1}(0)=B$ and
$$d(g(a),\phi(g(a),\gamma(a)))<\eps/4 \;\text{ for every $a\in A$.}$$
On the other hand, by the assumption, there is a strict deformation $\map \psi{M\times[0,1]}M$ which omits $M\setminus X$. Define $\map hAX$ by setting
$$h(a)=\psi(\phi(g(a),\gamma(a)),\delta(a)),$$
where $\map\delta A{[0,1]}$ is a map chosen so that $B=\delta^{-1}(0)$ and
$$d(h(a),\phi(g(a),\gamma(a)))<\min\{\eps/4,\ \dist(\phi(g(a),\gamma(a)),\img fB)\}.$$
This ensures us that $h$ is $\eps/2$-close to $g$ and that $h(a)\notin \img fB$ whenever $a\in A\setminus B$.
Then $h$ is a map which is $\eps$-close to $f$ and $\img hA\subs X$. Furthermore, $h\rest B=g\rest B=f\rest B$. It remains to check that $h$ is one-to-one (then it is a \Z-embedding, since every compact set in $X$ is a \Z-set).

Suppose $h(a)=h(a')$. If $a,a'\in B$ then $g(a)=g(a')$ and consequently $a=a'$. When $a,a'\in A\setminus B$, since $\psi$ and $\phi$ are strict deformations, $g(a)=g(a')$ and hence $a=a'$. In case $a\in B$ and $a'\notin B$, we have $h(a)=g(a)=f(a)\in\img fB$ but $h(a')\notin \img fB$ because $\phi$ omits $\img fB$. Thus, this case does not occur.
\end{proof}

\section{Pseudo-interiors of $\bcl_H(\Rm)$}

Throughout this section,
 $m>0$ is a fixed natural number.
A particular case of a well known theorem of Curtis \cite{Curtis} says
 that $\bcl_H(\Rm)=\exp(\Rm)$ is homeomorphic to $\Qmp$.
We shall consider the standard (convex) Euclidean metric $d$ on $\Rm$.
In this section, we investigate various $G_\delta$ subspaces of $\bcl_H(\Rm)$.
The main result of this section is the following:

\begin{tw}\label{pint-hyperspace}
Let $\Ef \subs \bcl_H(\Rm)$ be one of the subspaces below:
$$\nwd(\Rm),\ \perf(\Rm),\ \Cantor(\Rm),\ \En(\Rm),\ \bcl(D),$$
where $D$ is a dense $G_\delta$ set in $\Rm$ such that\/
 $\Rm \setminus D$ is also dense in $\Rm$ and
 in case $m > 1$ it is assumed that $D = \img pD \times \R$,
 where $p : \Rm \to \R^{m-1}$ is the projection onto the first $m-1$ coordinates.
Then the pair\/ $\pair{\bcl(\Rm)}{\Ef}$ is homeomorphic to
 $\pair\Qmp\pseudointmp$.
\end{tw}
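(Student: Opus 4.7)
The plan is to apply Theorem~\ref{ejgrpio} with $M = \bcl_H(\Rm)$ and $X = \bcl_H(\Rm)\setminus\Ef$. Since $\bcl_H(\Rm)\topiso\Qmp$ is a $\Q$-manifold by Curtis's theorem, proving that $X$ is a $\Zsig$-subset of $M$ which is strictly homotopy dense in $M$ will yield $\pair{\bcl_H(\Rm)}{X}\topiso\pair\Qmp\pseudobd$. Taking complements in each factor then gives $\pair{\bcl_H(\Rm)}\Ef\topiso\pair\Qmp\pseudointmp$, because $\Qmp\setminus\pseudobd=\pseudointmp$. Thus the task splits into (A) establishing the $\Zsig$-property of $\bcl_H(\Rm)\setminus\Ef$ and (B) constructing a strict deformation of $\bcl_H(\Rm)$ that omits $\Ef$; both must be verified for each of the families $\Ef$ listed in the theorem.

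For (A), in each case I would exhibit a natural countable cover of $\bcl_H(\Rm)\setminus\Ef$ by closed subsets and show that every piece is a \Z-set via Fact~\ref{Z-set}; the standard device is a small ambient isotopy $f_t$ of $\Rm$ pushing a prescribed set off itself, inducing the homotopy $A\mapsto f_t[A]$ on $\bcl_H(\Rm)$. For $\Ef=\nwd(\Rm)$, Proposition~\ref{wetgivwet} provides the cover $\{\setof{A\in\bcl_H(\Rm)}{U_n\subs A}\}_{\ntr}$, and the isotopy displaces a fixed point $p_n\in U_n$. For $\Ef=\perf(\Rm)$ the decomposition produced in the proof of Proposition~\ref{oeihgwef} serves the same role, and $\Cantor(\Rm)=\perf(\Rm)\cap\nwd(\Rm)$ combines the two. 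For $\Ef=\En(\Rm)$ one uses $\bcl_H(\Rm)\setminus\En(\Rm)=\bigcup_{\ntr}\setof{A}{\lambda(A)\goe 1/n}$, each piece being closed by upper semi-continuity of Lebesgue measure and \Z by perturbation to a thin set. For $\Ef=\bcl(D)$ one observes that $A\notin\bcl(D)$ iff some basic open $U_n$ satisfies $\emptyset\ne U_n\cap A\subs\Rm\setminus D$, and combines this with an $F_\sigma$-decomposition $\Rm\setminus D=\bigcup_k F_k$ to produce the required cover.

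For (B), I propose the template $\phi(A,t)=A\cup T_t(A)$, where the ``tag'' $T_t(A)$ is a canonically chosen closed subset of $\Rm$ that is disjoint from $A$ for $t>0$, whose Hausdorff distance to $A$ tends to zero as $t\to 0^+$, and which is shaped so that $\phi(A,t)\notin\Ef$: a small closed ball for $\nwd(\Rm)$, $\Cantor(\Rm)$, and $\En(\Rm)$; an isolated point for $\perf(\Rm)$; a point of $\Rm\setminus D$ for $\bcl(D)$. The position of $T_t(A)$ is specified via a continuous invariant of $A$, such as the center and radius of its smallest enclosing ball (or a Steiner-type point), and placed in a prescribed direction so that, given $\phi(A,t)$, one can intrinsically identify $T_t(A)$ as the component lying in a known geometric position, thereby recovering $A=\cl(\phi(A,t)\setminus T_t(A))$; this is precisely the strictness condition. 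The principal obstacle is in (B): genuine continuity in $A$ must be maintained throughout, so naive lexicographic selection of a point of $A$ (which is not continuous on $\bcl_H(\Rm)$) must be replaced by a continuous invariant, and the tag must be arranged to shrink onto $A$ as $t\to 0^+$. The case $\Ef=\bcl(D)$ with $m>1$ is the most delicate because $\Rm\setminus D$ is only $F_\sigma$; here the structural assumption $D=\img pD\times\R$ is used exactly to make a continuous selection of a tag-point in $\Rm\setminus D$ possible along the last coordinate.
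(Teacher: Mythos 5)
Your top-level reduction is exactly the paper's: apply Theorem \ref{ejgrpio} to $M=\bcl_H(\Rm)$ and $X=\bcl_H(\Rm)\setminus\Ef$, then pass to complements. However, both of your implementation steps have genuine gaps. In (A), the proposed device cannot work: to show that a closed piece such as $\setof{A\in\bcl(\Rm)}{U_n\subs A}$ or $\setof{A\in\bcl(\Rm)}{\lambda(A)\goe 1/n}$ is a \Z-set you must approximate $\id_{\bcl_H(\Rm)}$ by maps whose \emph{entire image} misses that piece, and a map $A\mapsto f_t[A]$ induced by a self-map of $\Rm$ close to the identity never does: if $A$ is a large closed ball containing $\clbal(U_n,1)$, then $f_t[A]$ still contains $U_n$ and still has measure $\goe 1/n$; indeed no self-map of $\Rm$ close to the identity can even omit a single prescribed point (a degree argument on a small sphere around that point). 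The correct and cheap route is the paper's: each $\Ef$ on the list \emph{contains a homotopy dense subspace} of $\bcl_H(\Rm)$ ($\Fin(\Rm)\subs\En(\Rm)\subs\nwd(\Rm)$ via Lemma \ref{fin-h-dense}, the homotopy density of $\Cantor(\Rm)\subs\perf(\Rm)$, and $\bcl(D)$ itself via Corollary \ref{owehfafafs}), so $\bcl_H(\Rm)\setminus\Ef$ is homotopy negligible, and being $F_\sigma$ by the results of \S2 it is a countable union of closed homotopy negligible sets, hence a \Zsig-set. You never establish any of these homotopy density facts, and they are not free (the $\bcl(D)$ case rests on the Lawson semilattice argument).

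The more serious gap is in (B), which is the heart of the theorem. Your tag placed via the smallest enclosing ball (or a Steiner-type point) violates the requirement $d_H(\phi(A,t),A)\to 0$ as $t\to 0^+$: for $A=\{(-1,0),(1,0)\}\subs\R^2$ a point placed just outside the circumscribed ball stays at distance about $\sqrt2$ from $A$ however small $t$ is, so $\phi$ is not continuous at $t=0$. What is needed is a tag that is simultaneously within distance $O(t)$ of $A$, disjoint from $A$, chosen continuously in $(A,t)$ (nearest-point or lexicographic selections are not continuous on $\bcl_H(\Rm)$), intrinsically recognizable inside the union uniformly in $A$ and $t$ (so that strictness can be verified), and, for $\Ef=\bcl(D)$, meeting $\Rm\setminus D$. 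You name these requirements as ``the principal obstacle'' but do not resolve them, and resolving them is precisely the content of Lemmas \ref{sdegfaqqfas} and \ref{strict-homotopy}: the paper first deforms into $\Fin(\Rm)\setminus\bcl(D)$ (the Lawson semilattice / relative $LC^0$ argument; this is also exactly where the hypothesis $D=\img pD\times\R$ enters for $m>1$), and then attaches, near $\max$ and $\min$ for $m=1$ or along the $e_m$-direction for $m>1$, an interval of length proportional to $t$ together with a scaled copy $tg(A)$ of an embedded image $g$ of the whole hyperspace in a cube; finiteness of $h(A,t)$ is what makes these attachments recognizable (and keeps an isolated point outside $D$), and $t$ and then $g(A)$, hence $A$, are read off from $\phi(A,t)$, giving strictness. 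Without a concrete construction of this kind, your step (B) is a plan rather than a proof.
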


Applying Theorem \ref{pint-hyperspace} above, we have

\begin{wn}
Suppose $\pair mk = \pair 10$ or $0 \loe k < m - 1$.
Then,
$$\pair{\bcl(\Rm)}{\bcl(\nu^m_k)} \topiso \pair\Qmp\pseudointmp.$$
Consequently, $\bcl_H(\nu^m_k) \topiso \ell_2$.
\end{wn}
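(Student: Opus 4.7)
The plan is to derive this corollary directly from Theorem~\ref{pint-hyperspace} by instantiating $\Ef = \bcl(D)$ with $D = \nu^m_k$, handling the two allowed cases separately, and then extracting the $\ell_2$ consequence from the resulting pair homeomorphism.

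In the case $(m,k)=(1,0)$, I take $D = \nu^1_0 = \R\setminus\Qyu$: this is a dense $G_\delta$ subset of $\R$ (since $\Qyu$ is $F_\sigma$) with dense complement $\Qyu$, and since $m=1$ the cylindrical hypothesis of Theorem~\ref{pint-hyperspace} is vacuous. Thus the theorem applies directly and yields $\pair{\bcl(\R)}{\bcl(\nu^1_0)} \topiso \pair\Qmp\pseudointmp$.

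For $0 \loe k < m-1$ with $m\goe 2$, I take $D = \nu^m_k$. The elementary hypotheses are easy to verify: $\nu^m_k$ is dense in $\Rm$ because $(\R\setminus\Qyu)^m \subs \nu^m_k$; its complement is dense because it contains $\Qyu^m$; and $\nu^m_k$ is $G_\delta$ since it is a finite union of finite intersections of $G_\delta$ subsets of $\Rm$ of the form $\setof{x \in \Rm}{x_i \notin \Qyu}$, and finite unions of $G_\delta$ sets remain $G_\delta$ in a metric space. The cylindrical condition $D = \img pD \times \R$ is the delicate step: $\nu^m_k$ is defined symmetrically in all coordinates and is not literally a product of some subset of $\R^{m-1}$ with $\R$. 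The hypothesis $k < m-1$, equivalently $m - k \goe 2$, ensures that every point of $\nu^m_k$ has at least two irrational coordinates, which is precisely the flexibility I expect to need in order to reduce to, or mimic, the cylindrical case in the proof of Theorem~\ref{pint-hyperspace}. Completing this reduction will be the main obstacle.

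Once the pair homeomorphism is in hand, the consequence $\bcl_H(\nu^m_k) \topiso \ell_2$ follows because $\pseudointmp = \pseudoint \setminus \{0\}$, the pseudo-interior $\pseudoint$ is homeomorphic to $\ell_2$, and removing a single point from an infinite-dimensional separable Hilbert space yields a space still homeomorphic to $\ell_2$.
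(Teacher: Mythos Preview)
Your treatment of the case $(m,k)=(1,0)$ is correct and matches the paper. The gap is in the case $0\loe k<m-1$: you correctly observe that $\nu^m_k$ fails the cylindrical hypothesis $D=\img pD\times\R$ of Theorem~\ref{pint-hyperspace} (indeed $\img p{\nu^m_k}=\nu^{m-1}_k$, and $\nu^{m-1}_k\times\R$ strictly contains $\nu^m_k$ whenever $k\goe 0$), but you then leave the reduction as an unfulfilled ``main obstacle''. Vague appeals to the flexibility of having two irrational coordinates do not substitute for an argument, and there is no evident way to apply Theorem~\ref{pint-hyperspace} directly to $D=\nu^m_k$.

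The paper does not attempt such a direct application. Instead it works with the auxiliary set $D'=\nu^{m-1}_k\times\R$, which \emph{does} satisfy the cylindrical hypothesis (this is exactly where $k<m-1$ is used, so that $\nu^{m-1}_k$ is defined and has dense complement in $\R^{m-1}$). Theorem~\ref{pint-hyperspace} then yields $\pair{\bcl(\Rm)}{\bcl(\Rm)\setminus\bcl(D')}\topiso\pair\Qmp\pseudobd$. Since $\nu^m_k\subs D'$, one has the inclusion $\bcl(\Rm)\setminus\bcl(D')\subs\bcl(\Rm)\setminus\bcl(\nu^m_k)$, and the latter set is a \Zsig-set in $\bcl(\Rm)$ because $\bcl(\nu^m_k)$ is a homotopy dense $G_\delta$ subset (by Proposition~\ref{owteepgsdgfa} and Corollary~\ref{owehfafafs}). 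Chapman's Theorem~6.6 in \cite{Chapman} then upgrades the pair homeomorphism from the smaller complement to the larger one, giving $\pair{\bcl(\Rm)}{\bcl(\Rm)\setminus\bcl(\nu^m_k)}\topiso\pair\Qmp\pseudobd$, which is the desired conclusion. This sandwich argument via an auxiliary cylinder and Chapman's theorem is the missing idea in your proposal.
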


\begin{proof}
As a direct consequence of Theorem \ref{pint-hyperspace}, we have
$$\pair{\bcl(\R)}{\bcl(\nu^1_0)} = \pair{\bcl(\R)}{\bcl(\R\setminus\Qyu)}
 \topiso \pair\Qmp\pseudointmp.$$
For each $0 \loe k < m - 1$,
 observe that
 $\Rm \setminus (\nu^{m-1}_k\times\R)
 = (\R^{m-1} \setminus \nu^{m-1}_k)\times\R
 \subs \Rm \setminus \nu^m_k$.
Thus, it follows that
$$\bcl(\Rm) \setminus \bcl(\nu^{m-1}_k\times\R)
 \subs \bcl(\Rm) \setminus \bcl(\nu^m_k).$$
By Proposition \ref{owteepgsdgfa} and Corollary \ref{owehfafafs},
 $\bcl(\nu^m_k)$ is a homotopy dense $G_\delta$ set in $\bcl_H(\Rm)$,
 which implies that $\bcl(\Rm) \setminus \bcl(\nu^m_k)$
 is a \Zsig-set in $\bcl(\Rm)$.
On the other hand,
 we can apply Theorem \ref{pint-hyperspace} to obtain
$$\pair{\bcl(\Rm)}{\bcl(\Rm)\setminus\bcl(\nu^{m-1}_k\times\R)}
 \topiso \pair{\Qmp}{\pseudobd}.$$
Then, it follows from Theorem 6.6 in \cite{Chapman} that
$$\pair{\bcl(\Rm)}{\bcl(\Rm)\setminus\bcl(\nu^m_k)}
 \topiso \pair\Qmp\pseudobd.$$
Thus, we have the result.
\end{proof}

The conclusion of Theorem \ref{pint-hyperspace} is equivalent to
$$\pair{\bcl_H(\Rm)}{\bcl_H(\Rm)\setminus\Ef} \topiso \pair\Qmp\pseudobd.$$
We saw in \S\ref{borelclasses} that
 the subspace $\Ef \subs \bcl(\Rm)$ in Theorem \ref{pint-hyperspace}
 is $G_\delta$, that is,
 $\bcl_H(\Rm)\setminus\Ef$ is $F_\sigma$ in $\bcl_H(\Rm)$.
If $\Ef$ contains a homotopy dense subset of $\bcl_H(\Rm)$
 then the complement $\bcl_H(\Rm)\setminus\Ef$ is a \Zsig-set.
Thus, in order to apply Theorem \ref{ejgrpio} to obtain the result,
 it suffices to show that
 $\Ef$ contains a homotopy dense subset of $\bcl_H(\Rm)$
 and the complement $\bcl_H(\Rm)\setminus\Ef$ contains
 a strictly homotopy dense subset of $\bcl_H(\Rm)$.
Observe that
$$\Fin(\Rm) \subs \En(\Rm) \subs \nwd(\Rm) \quad\text{and}\quad
 \Cantor(\Rm)\subs\perf(\Rm).$$

As a special case of a well known result
 due to Curtis and Nguyen To Nhu \cite{CuNhu},
 we have
$$\pair{\bcl_H(\Rm)}{\Fin(\Rm)} = \pair{\exp(\Rm)}{\Fin(\Rm)}
 \topiso\pair\Qmp{\Q_f\setminus0},$$
where $\Q_f$ denotes the subspace of $\Q$
 consisting of all eventually zero sequences,
 which is homotopy dense in $\Q$.
This fact implies the following:

\begin{lm}\label{fin-h-dense}
The subspace $\Fin(\Rm)$ is homotopy dense in $\bcl_H(\Rm)$.
\end{lm}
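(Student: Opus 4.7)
The plan is to read the lemma directly off the Curtis--Nguyen To Nhu homeomorphism of pairs
$$\pair{\bcl_H(\Rm)}{\Fin(\Rm)} \topiso \pair{\Qmp}{\Q_f\setminus 0}$$
just recorded above. Under this homeomorphism, the homotopy density of $\Fin(\Rm)$ in $\bcl_H(\Rm)$ is equivalent to the homotopy density of $\Q_f\setminus 0$ in $\Qmp$, so it is enough to establish the latter. Since $\Q_f$ is homotopy dense in $\Q$ and $\Qmp$ is an open subset of $\Q$, this is a special case of the general principle that homotopy density of a subspace is inherited by any open subspace.

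To carry out this inheritance step, I would fix a homotopy $\map{h}{\Q\times[0,1]}{\Q}$ with $h_0=\id_{\Q}$ and $\img{h_t}{\Q}\subs\Q_f$ for every $t>0$. By continuity of $h$ together with openness of $\Qmp$ in $\Q$, each $x\in\Qmp$ admits an open neighborhood $V_x\subs\Qmp$ and some $\eps_x>0$ with $h(V_x\times[0,\eps_x])\subs\Qmp$. Paracompactness of $\Qmp$ then patches these local data into a single continuous function $\map{\tau}{\Qmp}{(0,1]}$ such that $h(x,s)\in\Qmp$ whenever $s\loe\tau(x)$. The reparameterization $h'(x,t)=h(x,t\tau(x))$ is a map $\map{h'}{\Qmp\times[0,1]}{\Qmp}$ satisfying $h'_0=\id$ and $\img{h'_t}{\Qmp}\subs\Q_f\setminus 0$ for every $t>0$, and pulling it back through the Curtis--Nguyen To Nhu homeomorphism produces the required homotopy on $\bcl_H(\Rm)$.

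The only substantive step is the construction of $\tau$, which is a routine paracompactness exercise; all of the genuinely hard work is bundled into the Curtis--Nguyen To Nhu homeomorphism that we are taking as given.
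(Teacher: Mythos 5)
Your argument is correct and is essentially the paper's own proof: the paper likewise deduces the lemma directly from the Curtis--Nguyen To Nhu homeomorphism $\pair{\bcl_H(\Rm)}{\Fin(\Rm)}\topiso\pair{\Qmp}{\Q_f\setminus 0}$ together with the homotopy density of $\Q_f$ in $\Q$, leaving implicit exactly the open-subspace reparameterization step that you spell out with the function $\tau$. (The paper also later notes that Lemma \ref{sdegfaqqfas} yields an independent direct proof, but that is an alternative, not the route you or the paper's stated derivation take.)
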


Using Lemma \ref{fin-h-dense} above, we can easily show the following:

\begin{lm}
The space $\Cantor(\Rm)$ is homotopy dense in $\bcl_H(\Rm)$.
\end{lm}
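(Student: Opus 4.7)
The plan is to reduce to the previous lemma, using $\Fin(\Rm)$ as a stepping stone, and then ``thicken'' each finite set into a Cantor set by Minkowski addition of a small Cantor seed.

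Let $h\colon \bcl(\Rm)\times[0,1]\to\bcl(\Rm)$ be the homotopy supplied by Lemma \ref{fin-h-dense}, so that $h(A,0)=A$ and $h(A,t)\in\Fin(\Rm)$ for every $t>0$. Fix, once and for all, a Cantor set $K\subs\clbal(0,1)$ in $\Rm$ with $0\in K$ (for instance, a scaled product of middle-thirds Cantor sets). Define
\[
H(A,t) \;=\; h(A,t)+tK \;=\; \{x+tk : x\in h(A,t),\ k\in K\}.
\]
I would then verify three things. First, $H$ is a homotopy from the identity: since $tK=\{0\}$ at $t=0$, we have $H(A,0)=h(A,0)=A$, and continuity follows from continuity of $h$ combined with the elementary estimate $d_H(C+tK,D+sK)\loe d_H(C,D)+|t-s|\diam K$ for $C,D\in\bcl(\Rm)$ and $s,t\in[0,1]$, which is immediate from the triangle inequality for $d_H$ applied to Minkowski sums.

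Second, I would check that $H(A,t)\in\Cantor(\Rm)$ for every $t>0$. Writing $h(A,t)=\{x_1,\dots,x_n\}$, we have
\[
H(A,t) \;=\; \bigcup_{i=1}^n (x_i+tK),
\]
a finite union of translates of the scaled Cantor set $tK$. The union is nonempty, compact, and perfect, since each $x_i+tK$ is. For total disconnectedness I would invoke the sum theorem of dimension theory: a finite union of closed $0$-dimensional subsets of a separable metric space is $0$-dimensional. Brouwer's characterization of the Cantor set then identifies $H(A,t)$ as a member of $\Cantor(\Rm)$.

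The step I expect to require the most care is the total disconnectedness of $H(A,t)$: the translates $x_i+tK$ may overlap in a complicated way, so I rely on the $0$-dimensional sum theorem rather than attempting to force disjointness by an ad hoc rescaling of $t$, which would break continuity of $H$ at the moment the cardinality of $h(A,t)$ changes.
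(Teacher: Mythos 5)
Your proposal is correct and is essentially the paper's own argument: push into $\Fin(\Rm)$ via the homotopy of Lemma \ref{fin-h-dense} and then add $t$ times a fixed Cantor set containing $0$. The only difference is that you spell out the justification (perfectness, compactness, the zero-dimensional sum theorem, and Brouwer's characterization) of the fact that a finite union of Cantor sets is a Cantor set, which the paper simply asserts.
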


\begin{proof}
Let $h$ be a homotopy of $\bcl_H(\Rm)$ which witnesses
 that $\Fin(\Rm)$ is homotopy dense,
 i.e., $h(A,t)$ is a finite set for every $t > 0$.
Choose a Cantor set $C \subs [0,1]^m$ with $0 \in C$
 and define a homotopy $\map\phi{\bcl_H(\Rm)\times[0,1]}{\bcl_H(\Rm)}$ by
$$\phi(A,t) = h(A,t) + tC.$$
Then $\phi_0 = \id$ and $\phi(A,t)\in\Cantor(\Rm)$ for every $t>0$
 because a finite union of Cantor sets is a Cantor set.
\end{proof}

Concerning the space $\bcl(D)$ in Theorem \ref{pint-hyperspace},
 we have shown in Corollary \ref{owehfafafs} that
 it is homotopy dense in $\bcl_H(\Rm)$.
Thus, to complete the proof of Theorem \ref{pint-hyperspace},
 it remains to show the following:

\begin{lm}\label{strict-homotopy}
Under the same assumption as Theorem \ref{pint-hyperspace},
 each of the following spaces
 are strictly homotopy dense in $\bcl_H(\Rm)$:
$$\bcl(\Rm) \setminus \nwd(\Rm),\ \bcl(\Rm) \setminus \perf(\Rm),\
 \bcl(\Rm) \setminus \bcl(D).$$
\end{lm}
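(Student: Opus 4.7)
For each of the three subspaces I would exhibit an explicit continuous strict deformation $\phi : \bcl_H(\Rm) \times [0,1] \to \bcl_H(\Rm)$ with $\phi_0 = \id$ whose image lands in the prescribed complement for every $t > 0$. In each case $\phi(A,t)$ will have the form $A \cup S(A,t)$, where the decoration $S(A,t)$ Hausdorff-collapses into $A$ as $t \to 0^+$; continuity at $t=0$ is then automatic from $d_H(A, A\cup S(A,t)) \loe \sup_{x\in S(A,t)} \dist(x,A)$, and $\phi_0 = \id$ is built in.

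\textbf{Two cases at once.} For both $\bcl(\Rm) \setminus \nwd(\Rm)$ and $\bcl(\Rm) \setminus \bcl(D)$ I would use the single formula
$$\phi(A,t) = A \cup \bigl(\clbal(A,2t) \setminus \bal(A,t)\bigr).$$
For $t > 0$ the annular shell has nonempty interior in $\Rm$, so $\phi(A,t) \notin \nwd(\Rm)$; and since $\Rm \setminus D$ is dense in $\Rm$, every nonempty open subset of $\Rm$ meets it, giving $\phi(A,t) \notin \bcl(D)$ as well. The shell is separated from $A$ by a gap of width exactly $t$, so for $t > 0$ one recovers $(A,t)$ canonically from $\phi(A,t)$ as its unique decomposition into two pieces at mutual distance $t$; this yields strict injectivity.

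\textbf{The perfect-set case.} The shell contributes no isolated point, so here a finite decoration is needed. I plan to build one from the homotopy $h : \bcl_H(\Rm)\times[0,1] \to \bcl_H(\Rm)$ of Lemma \ref{fin-h-dense}, for which $h_0 = \id$ and $h(A,t) \in \Fin(\Rm)$ for $t > 0$, by displacing its values outward along a continuous vector field. Concretely, set $\phi(A,t) = A \cup \bigl(h(A,t) + 2t\, w(A)\bigr)$, where $w : \bcl_H(\Rm) \to \Rm$ is a continuous vector field of norm large enough (say of order $\diam(A)+1$) that for $t > 0$ the translate $h(A,t) + 2tw(A)$ is strictly outside $A$. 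Then for $t > 0$ this displaced finite set sits at positive distance from $A$ and consists of isolated points of $\phi(A,t)$, so $\phi(A,t) \notin \perf(\Rm)$; continuity at $t=0$ follows from $h_0 = \id$ combined with the displacement vanishing linearly in $t$.

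\textbf{Main difficulty.} The subtle step is arranging strict injectivity of $\phi$ in the perfect-set case. In contrast to the shell construction, the decoration here is a small finite set close to $A$, so recovering $A$ from $\phi(A,t)$ requires that the finite decoration be canonically singled out; but the homotopy $h$ of Lemma \ref{fin-h-dense} is not known to be strictly injective, and any modification must simultaneously (i) keep the finite decoration separated from $A$, (ii) encode enough information to recover $(A,t)$ from $\phi(A,t)$, and (iii) Hausdorff-collapse into $A$ as $t \to 0^+$. The absence of a continuous selection $\bcl_H(\Rm) \to \Rm$, $A \mapsto a(A) \in A$, for $m \goe 2$ is the structural reason this is not routine. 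I would handle it by augmenting the displaced decoration with a small canonical "signature" anchored to the continuous Chebyshev data $(c(A), R(A))$: attach one further marker whose distance from $c(A)$ encodes $t$, while the internal spacing of the displaced $h(A,t)$ together with the signature position canonically identifies $A$. Making all this simultaneously continuous at $t=0$, strictly injective for $t > 0$, and well-defined on all of $\bcl_H(\Rm)$ is the crux of the argument.
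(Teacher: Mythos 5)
Your shell construction for the first two cases breaks down at two separate points. First, the map $A\mapsto\clbal(A,2t)\setminus\bal(A,t)$ is \emph{not} continuous in the Hausdorff metric: take $m=2$, $t=1$, and let $A$ be the unit circle; then $\clbal(A,2)\setminus\bal(A,1)=\{0\}\cup\setof{y}{2\loe\|y\|\loe 3}$, while for the circle $A_\eps$ of radius $1-\eps$ the shell is $\setof{y}{2-\eps\loe\|y\|\loe 3-\eps}$, so the isolated point $0$ of $\phi(A,1)$ lies at distance $1-\eps$ from $\phi(A_\eps,1)$ although $d_H(A,A_\eps)=\eps$. (The estimate $d_H(\clbal(A,s),\clbal(B,s))\loe d_H(A,B)$ used in the paper controls sublevel sets of $\dist(\cdot,A)$ only; removing $\bal(A,t)$ destroys it.) Second, your argument that $\phi(A,t)\notin\bcl(D)$ is wrong in substance: under the identification $\bcl(D)\subs\bcl(\Rm)$ via $B\mapsto\cl_{\Rm}B$, one has $\phi(A,t)\in\bcl(D)$ exactly when $\phi(A,t)=\cl(\phi(A,t)\cap D)$, and since $D$ is dense every regularly closed set belongs to $\bcl(D)$ (this is precisely the mechanism of Corollary \ref{owehfafafs}). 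Merely meeting the dense set $\Rm\setminus D$ proves nothing: for $A$ a closed ball, $A\cup\bigl(\clbal(A,2t)\setminus\bal(A,t)\bigr)$ is regularly closed and hence lies in $\bcl(D)$. To omit $\bcl(D)$ one must produce a point of $\phi(A,t)$ outside $\cl(\phi(A,t)\cap D)$, e.g.\ an isolated point in $\Rm\setminus D$, which is why the paper builds $\phi(A,t)$ from the finite set $h(A,t)$ of Lemma \ref{sdegfaqqfas} (with $h(A,t)\not\subs D$, resp.\ $\img p{h(A,t)}\not\subs\img pD$ using the hypothesis $D=\img pD\times\R$ for $m>1$, a hypothesis your sketch never invokes) rather than keeping $A$ as a subset of $\phi(A,t)$.

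In the perfect-set case there are again two problems. The displacement $2t\,w(A)$ has length about $2t(\diam(A)+1)\to 0$ as $t\to 0$, whereas $\diam(A)$ is fixed; so for a fat $A$ (again a closed ball) and small $t$ the translate $h(A,t)+2tw(A)$ can lie inside or accumulate on $A$, and your claim that it is ``strictly outside $A$'' fails --- a displacement can only be guaranteed to clear $A$ if its length exceeds $\diam(A)$ plus the error $d_H(h(A,t),A)$, which is incompatible with continuity at $t=0$ in this form. More importantly, the difficulty you correctly isolate --- recovering $A$ from $\phi(A,t)$, i.e.\ strictness --- is exactly where the paper's key idea enters and is absent from your sketch: since $\bcl_H(\Rm)\topiso\Qmp$ embeds in $\Q\topiso\bcl_H([1,2])$, fix an embedding $g:\bcl_H(\Rm)\to\bcl_H([1,2])$ and attach to $h(A,t)$ a copy of $g(A)$ rescaled by $t$, together with an interval marker encoding $t$ (for $m=1$ anchored at $\min h(A,t)$ and $\max h(A,t)$; for $m>1$ Minkowski-added along the last coordinate to \emph{every} point of the finite set $h(A,t)$, which is how the paper sidesteps the nonexistence of a continuous selection $A\mapsto a(A)\in A$ that you worry about). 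From $\phi(A,t)$ one first reads off $t$, then a copy of $t\,g(A)$, hence $g(A)$ and finally $A$ because $g$ is injective; the same decoration simultaneously kills perfectness and membership in $\bcl(D)$, and a thickened variant handles $\nwd(\Rm)$. Without an encoding device of this kind (or some substitute), the Chebyshev-center ``signature'' you propose does not determine $A$ --- knowing $h(A,t)$, $c(A)$ and $R(A)$ falls far short of recovering $A$ --- so the crux remains unproved and the proposal as it stands does not establish the lemma.
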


First, we show the following lemma,
 which also gives a direct proof of Lemma \ref{fin-h-dense}:

\begin{lm}\label{sdegfaqqfas}
For $D \subs \Rm$,
 if $\Rm \setminus D$ is dense in $\Rm$
 then $\Fin(\Rm) \setminus \bcl(D)$ is homotopy dense in $\bcl_H(\Rm)$.
\end{lm}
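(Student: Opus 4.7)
The plan is to directly construct a homotopy $\map h{\bcl_H(\Rm) \times [0,1]}{\bcl_H(\Rm)}$ with $h_0 = \id$ and $h(A,t) \in \Fin(\Rm) \setminus \bcl(D)$ for every $t \in (0,1]$. Since a finite set $F$ lies in $\bcl(D)$ precisely when $F \subs D$, the aim is to produce, continuously in $(A,t)$, a finite Hausdorff approximation of $A$ consisting entirely of points from $\Rm \setminus D$.

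First, I would set up a sequence of discrete finite-net maps. For each $n \in \nat$, fix a locally finite open cover $\{V^n_i : i \in I_n\}$ of $\Rm$ by sets of diameter less than $2^{-n}$, with $\{V^{n+1}_j\}$ refining $\{V^n_i\}$, together with a subordinate partition of unity $\{\psi^n_i\}$. Density of $\Rm\setminus D$ allows us to pick $p^n_i \in V^n_i \cap (\Rm\setminus D)$ for every $i$. The raw discrete approximation $F_n(A) = \{p^n_i : V^n_i \cap A \nnempty\}$ is finite (since $A$ is bounded and the cover is locally finite), lies entirely in $\Rm\setminus D$, and satisfies $d_H(A,F_n(A)) < 2^{-n}$.

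The obstacle is that $F_n$ is discontinuous at configurations where $A$ meets $\bd V^n_i$. To smooth this, I would use the continuous proxy $\phi^n_i(A) = \sup_{x \in A}\psi^n_i(x)$, which vanishes precisely when $V^n_i \cap A = \emptyset$. For $t \in (2^{-n-1},2^{-n}]$ I would build $h(A,t)$ so that each index $i$ with $\phi^n_i(A)>0$ contributes a point $q^n_i(A,t)$ moving continuously inside $V^n_i \cap (\Rm\setminus D)$: using density of $\Rm\setminus D$ in $V^n_i$, one picks an arc (or a finite sequence of close-by representatives) along which the point slides from $p^n_i$ toward a neighbor $p^{n+1}_j$ with $V^{n+1}_j \subs V^n_i$ as $\phi^n_i(A)$ decays; the refinement condition lets the contribution of $i$ merge with that of $j$ when $\phi^n_i(A)$ vanishes, preserving continuity across the scale transition $t = 2^{-n}$.

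The main obstacle is the bookkeeping needed to make the above continuously well-defined across both the index set changes as $A$ varies and the scale transitions as $t$ crosses $2^{-n}$. Once continuity of $h$ on $\bcl_H(\Rm) \times (0,1]$ is established and combined with the bound $d_H(h(A,t),A) < 2^{-n+1}$ valid for $t \loe 2^{-n}$, which yields continuity at $t=0$, the conclusion follows immediately since each $h(A,t)$ is by construction a nonempty finite subset of $\Rm\setminus D$, hence an element of $\Fin(\Rm)\setminus\bcl(D)$.
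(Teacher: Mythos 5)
There is a genuine gap, and it starts with your strengthening of the target. You correctly note that a finite set $F$ belongs to $\bcl(D)$ exactly when $F\subs D$, so all that is needed is that $h(A,t)$ have \emph{at least one} point in $\Rm\setminus D$; but you then aim to produce approximations lying \emph{entirely} in $\Rm\setminus D$. That stronger goal is not just harder --- it is impossible in general. The hypothesis only says $\Rm\setminus D$ is dense, and in the paper's main application with $m=1$ one has $D=\R\setminus\Qyu$, so $\Rm\setminus D=\Qyu$ is countable. Then the finite subsets of $\Rm\setminus D$ form a countable, hence totally disconnected, subspace of $\bcl_H(\Rm)$, so for each fixed $t>0$ the continuous map $A\mapsto h(A,t)$ from the connected space $\bcl_H(\Rm)$ would have to be constant; continuity at $t=0$ would then force that constant to converge to every $A$ simultaneously, a contradiction. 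The same issue defeats the mechanism you propose for continuity: in the Hausdorff metric a point of $h(A,t)$ can only appear or disappear by sliding continuously into or out of another point of the set, and your ``arc inside $V^n_i\cap(\Rm\setminus D)$'' need not exist, since $\Rm\setminus D$ can be totally disconnected; a ``finite sequence of close-by representatives'' produces jumps, not a homotopy. So the bookkeeping you defer is not mere bookkeeping --- it is exactly where the construction breaks.

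The fix is to exploit the weaker membership condition. The paper's proof never forces all points off $D$: it works with $\Ha=\Fin(\Rm)\setminus\bcl(D)$ (finite sets meeting $\Rm\setminus D$), observes that $\Ha$ is dense and closed under finite unions, and invokes the Lawson semilattice criterion (Theorem 5.1 of the Kubi\'s--Sakai--Yaguchi paper cited as [16]) which reduces homotopy density to showing $\Ha$ is relatively $LC^0$ in $\bcl_H(\Rm)$. That is verified by joining $A_0$ to $A_0\cup A_1$ via $h(t)=A_0\cup\setof{(1-t)q_i+tp_i}{i<n}$: the moving points may well pass through $D$, but keeping the anchor $A_0$ inside every set of the path guarantees membership in $\Ha$ and keeps the path inside the prescribed ball. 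If you want to salvage a direct homotopy construction, you would similarly have to carry along a designated point (or finite set) from $\Rm\setminus D$ as an anchor while letting all other points move freely through $\Rm$; as written, your construction cannot be completed.
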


\begin{proof}
Let $\Ha = \Fin(\Rm) \setminus \bcl(D)$, that is,
 $\Ha$ consists of all nonempty finite sets $A \subs \Rm$
 such that $A \setminus D \nnempty$.
Then $\Ha$ is dense in $\bcl_H(\Rm)$.
Moreover, $\Ha$ is closed under finite unions,
 i.e., $A \cup B \in \Ha$ whenever $A, B \in\Ha$.
Recall that $\pair{\bcl_H(\Rm)}\cup$ is a Lawson semilattice
 (see \cite{Lawson}), that is,
 the union operator $\pair AB \mapsto A \cup B$ is continuous
 and $\bcl_H(\Rm)$ has an open base consisting of subsemilattices;
 namely, every open ball with respect to the Hausdorff metric
 is a subsemilattice of $\pair{\bcl_H(\Rm)}\cup$.
By virtue of \cite[Theorem 5.1]{KSY},
 it suffices to show that $\Ha$ is relatively $LC^0$ in $\bcl_H(\Rm)$.
Recall that a subspace $Y$ of a space $X$ is {\em relatively $LC^0$ in} $X$
 if every neighborhood $U$ of each $x \in X$ contains a neighborhood
 $V$ of $x$ in $X$ such that
 every $a,b \in V \cap Y$ can be joined by a path in $U \cap Y$.

Fix $A \in \bcl_H(\Rm)$ and $\eps > 0$.
For each $A_0,A_1 \in \bal_{d_H}(A,\eps/2) \cap \Ha$,
 we describe how to construct a path in $\bal_{d_H}(A,\eps)\cap \Ha$
 which joins $A_0$ to $A_0 \cup A_1$.
Let $A_1 = \{p_0,\dots,p_{n-1}\}$.
For each $i < n$, find $q_i\in A_0$ such that $\norm{p_i-q_i} < \eps/2$,
 and define
$$h(t) = A_0 \cup \setof{(1-t)q_i + tp_i}{i<n}
 \quad\text{for each $t \in [0,1]$.}$$
Then $h(t) \in \Ha$ because $A_0 \subs h(t) \in \Fin(\Rm)$.
Further, $d_H(A_0,h(t)) < \eps/2$, that is, $h(t)\in \bal_{d_H}(A,\eps)$.
Finally, $h(0) = A_0$ and $h(1) = A_0\cup A_1$.
By the same argument,
 we can construct a path in $\bal_{d_H}(A,\eps) \cap \Ha$
 which joins $A_0 \cup A_1$ to $A_1$.
\end{proof}

\begin{proof}[Proof of Lemma \ref{strict-homotopy}]
First, we show the case $m=1$.
It suffices to construct a strict deformation
 $\map\phi{\bcl_H(\R)\times[0,1]}{\bcl_H(\R)}$
 which omits $\nwd(\R) \cup \perf(\R) \cup \bcl(D)$.
Let $h$ be a homotopy of $\bcl(\R)$ which witnesses
 that $\Fin(\R)\setminus\bcl(D)$ is homotopy dense (Lemma \ref{sdegfaqqfas}).
Since $\bcl_H([1,2]) \topiso \Q$,
 we have an embedding $g : \bcl_H(\R) \to \bcl_H([1,2])$.
The desired $\phi$ can be defined as follows:
$$\phi(A,t) = h(A,t) \cup \{\max h(A,t) + [t,2t],\
 \min h(A,t) - tg(A)\}.$$
\begin{figure}[hbtp]
\begin{center}\small
\unitlength 0.1in
\begin{picture}(42.00,8.60)(4.00,-11.40)
\put(28.0000,-8.0000){\makebox(0,0){$*$}}%
\put(22.0000,-8.0000){\makebox(0,0){$*$}}%
\put(20.0000,-8.0000){\makebox(0,0){$*$}}%
\put(25.0000,-8.0000){\makebox(0,0){$*$}}%
\put(25.1000,-9.7000){\makebox(0,0)[lt]{$h(A,t)$}}%
\put(30.0000,-8.0000){\makebox(0,0){$*$}}%
%
\special{pn 20}%
\special{pa 3600 800}%
\special{pa 4200 800}%
\special{fp}%
%
\special{pn 4}%
\special{pa 1900 870}%
\special{pa 1900 920}%
\special{fp}%
%
\special{pn 4}%
\special{pa 1900 920}%
\special{pa 3000 920}%
\special{fp}%
\special{pa 3000 920}%
\special{pa 3000 870}%
\special{fp}%
\put(19.0000,-8.0000){\makebox(0,0){$*$}}%
%
\special{pn 4}%
\special{pa 3000 500}%
\special{pa 3000 750}%
\special{fp}%
\special{sh 1}%
\special{pa 3000 750}%
\special{pa 3020 683}%
\special{pa 3000 697}%
\special{pa 2980 683}%
\special{pa 3000 750}%
\special{fp}%
\put(28.8000,-4.5000){\makebox(0,0)[lb]{$\max h(A,t)$}}%
%
\special{pn 4}%
\special{pa 1900 500}%
\special{pa 1900 750}%
\special{fp}%
\special{sh 1}%
\special{pa 1900 750}%
\special{pa 1920 683}%
\special{pa 1900 697}%
\special{pa 1880 683}%
\special{pa 1900 750}%
\special{fp}%
\put(18.0000,-4.5000){\makebox(0,0)[lb]{$\min h(A,t)$}}%
\put(34.4000,-11.4000){\makebox(0,0)[lt]{$\max h(A,t)+[t,2t]$}}%
%
\special{pn 4}%
\special{pa 3900 1080}%
\special{pa 3900 850}%
\special{fp}%
\special{sh 1}%
\special{pa 3900 850}%
\special{pa 3880 917}%
\special{pa 3900 903}%
\special{pa 3920 917}%
\special{pa 3900 850}%
\special{fp}%
%
\special{pn 20}%
\special{sh 1}%
\special{ar 810 800 10 10 0  6.28318530717959E+0000}%
\special{sh 1}%
\special{ar 870 800 10 10 0  6.28318530717959E+0000}%
\special{sh 1}%
\special{ar 1070 800 10 10 0  6.28318530717959E+0000}%
%
\special{pn 20}%
\special{pa 940 800}%
\special{pa 1030 800}%
\special{fp}%
%
\special{pn 20}%
\special{pa 1150 800}%
\special{pa 1260 800}%
\special{fp}%
%
\special{pn 8}%
\special{pa 400 800}%
\special{pa 4600 800}%
\special{dt 0.045}%
\special{pa 4600 800}%
\special{pa 4599 800}%
\special{dt 0.045}%
\put(6.2000,-11.4000){\makebox(0,0)[lt]{$\min h(A,t)-tg(A)$}}%
%
\special{pn 4}%
\special{pa 700 750}%
\special{pa 700 700}%
\special{fp}%
\special{pa 700 700}%
\special{pa 1300 700}%
\special{fp}%
\special{pa 1300 700}%
\special{pa 1300 750}%
\special{fp}%
\put(4.0000,-4.5000){\makebox(0,0)[lb]{$\min h(A,t)-[t,2t]$}}%
%
\special{pn 4}%
\special{pa 1000 500}%
\special{pa 1000 670}%
\special{fp}%
\special{sh 1}%
\special{pa 1000 670}%
\special{pa 1020 603}%
\special{pa 1000 617}%
\special{pa 980 603}%
\special{pa 1000 670}%
\special{fp}%
%
\special{pn 4}%
\special{pa 1020 1080}%
\special{pa 1020 850}%
\special{fp}%
\special{sh 1}%
\special{pa 1020 850}%
\special{pa 1000 917}%
\special{pa 1020 903}%
\special{pa 1040 917}%
\special{pa 1020 850}%
\special{fp}%
\end{picture}%
\end{center}
\end{figure}

For each $t > 0$,
 it is clear that $\phi(A,t) \notin \nwd(\R) \cup \perf(\R)$.
Since $h(A,t)$ contains an isolated point from $\R \setminus D$
 which remains to be isolated in $\phi(A,t)$,
 we see that $\phi(A,t) \notin \bcl(D)$.
Given $\phi(A,t)$ for $t>0$,
 we can reconstruct $t$ as the length of
 the interval $J \subs \phi(A,t)$ with $\max J = \max\phi(A,t)$.
Consequently, $g(A)$ can be reconstructed from $\phi(A,t)$.
Thus, $\phi$ is a strict deformation.

\smallskip
Next, we show the case $m > 1$.
To see that $\bcl(\Rm) \setminus \perf(\Rm)$ and
 $\bcl(\Rm) \setminus \bcl(D)$ are strictly homotopy dense in $\bcl_H(\Rm)$,
 we shall construct a strict deformation
 $\map\phi{\bcl_H(\Rm)\times[0,1]}{\bcl_H(\Rm)}$
 which omits $\perf(\Rm) \cup \bcl(D)$.
Recall $p : \Rm \to \R^{m-1}$ is the projection
 onto the first $m - 1$ coordinates.
Note that $\img pD$ is a dense $G_\delta$ set in $\R^{m-1}$
 and $\R^{m-1} \setminus \img pD$ is also dense in $\R^{m-1}$.
Let $e_m=\seq{0,0,\dots,0,1}\in\Rm$.

Since $\Rm \setminus (\img pD \times \R)$ is dense in $\Rm$,
 it follows from Lemma \ref{sdegfaqqfas}
 that $\Fin(\Rm) \setminus \bcl(\img pD \times \R)$
 is homotopy dense in $\bcl_H(\Rm)$.
Let $h$ be a homotopy of $\bcl(\Rm)$ which witnesses this,
 i.e., for $t > 0$, $h(A,t)$ is finite and $p[h(A,t)] \not\subs \img pD$.
Since $\bcl_H([3/5,2/3]) \topiso \Q$,
 we have an embedding $g : \bcl_H(\Rm) \to \bcl_H([3/5,2/3])$.
The desired $\phi$ can be defined as follows:
$$\phi(A,t) = h(A,t) + t\left(\bigcup_{i\in\omega}
 2^{-i}(g(A) \cup [3/4,1])e_m \cup \{2e_m\}\right).$$
\begin{figure}[hbtp]
\begin{center}\small
\unitlength 0.1in
\begin{picture}(40.83,12.20)(5.20,-14.80)
%
\special{pn 8}%
\special{pa 1400 803}%
\special{pa 4600 803}%
\special{dt 0.045}%
\special{pa 4600 803}%
\special{pa 4599 803}%
\special{dt 0.045}%
%
\special{pn 13}%
\special{sh 1}%
\special{ar 1900 800 10 10 0  6.28318530717959E+0000}%
%
\special{pn 20}%
\special{sh 1}%
\special{ar 2310 800 10 10 0  6.28318530717959E+0000}%
%
\special{pn 13}%
\special{pa 2070 800}%
\special{pa 2200 800}%
\special{fp}%
%
\special{pn 20}%
\special{pa 2363 806}%
\special{pa 2423 806}%
\special{fp}%
\put(12.6000,-7.5000){\makebox(0,0)[lb]{$a$}}%
%
\special{pn 20}%
\special{sh 1}%
\special{ar 4600 800 10 10 0  6.28318530717959E+0000}%
%
\special{pn 20}%
\special{sh 1}%
\special{ar 2263 803 10 10 0  6.28318530717959E+0000}%
%
\special{pn 4}%
\special{pa 2370 470}%
\special{pa 2370 680}%
\special{fp}%
\special{sh 1}%
\special{pa 2370 680}%
\special{pa 2390 613}%
\special{pa 2370 627}%
\special{pa 2350 613}%
\special{pa 2370 680}%
\special{fp}%
\put(22.0000,-4.3000){\makebox(0,0)[lb]{$a+tg(A)$}}%
%
\special{pn 4}%
\special{pa 3100 470}%
\special{pa 2810 760}%
\special{fp}%
\special{sh 1}%
\special{pa 2810 760}%
\special{pa 2871 727}%
\special{pa 2848 722}%
\special{pa 2843 699}%
\special{pa 2810 760}%
\special{fp}%
\put(30.0000,-4.3000){\makebox(0,0)[lb]{$a+[3/4,1]te_m$}}%
%
\special{pn 20}%
\special{sh 1}%
\special{ar 2480 800 10 10 0  6.28318530717959E+0000}%
\put(44.5000,-9.0000){\makebox(0,0)[lt]{$a+2te_m$}}%
%
\special{pn 20}%
\special{pa 3000 800}%
\special{pa 2600 800}%
\special{fp}%
%
\special{pn 8}%
\special{pa 2480 750}%
\special{pa 2480 700}%
\special{fp}%
\special{pa 2480 700}%
\special{pa 2260 700}%
\special{fp}%
\special{pa 2260 700}%
\special{pa 2260 750}%
\special{fp}%
%
\special{pn 4}%
\special{pa 2260 870}%
\special{pa 2260 920}%
\special{fp}%
\special{pa 2260 920}%
\special{pa 3000 920}%
\special{fp}%
\special{pa 3000 920}%
\special{pa 3000 870}%
\special{fp}%
%
\special{pn 4}%
\special{pa 2200 870}%
\special{pa 2200 920}%
\special{fp}%
\special{pa 2200 920}%
\special{pa 1900 920}%
\special{fp}%
\special{pa 1900 920}%
\special{pa 1900 870}%
\special{fp}%
%
\special{pn 13}%
\special{pa 1930 800}%
\special{pa 1970 800}%
\special{fp}%
%
\special{pn 13}%
\special{sh 1}%
\special{ar 2020 800 10 10 0  6.28318530717959E+0000}%
%
\special{pn 13}%
\special{sh 1}%
\special{ar 1920 800 10 10 0  6.28318530717959E+0000}%
%
\special{pn 4}%
\special{pa 2570 1160}%
\special{pa 2570 960}%
\special{fp}%
\special{sh 1}%
\special{pa 2570 960}%
\special{pa 2550 1027}%
\special{pa 2570 1013}%
\special{pa 2590 1027}%
\special{pa 2570 960}%
\special{fp}%
\put(22.8000,-12.0000){\makebox(0,0)[lt]{$a+t(g(A)\cup[3/4,1]e_m)$}}%
%
\special{pn 4}%
\special{pa 1860 870}%
\special{pa 1860 920}%
\special{fp}%
\special{pa 1860 920}%
\special{pa 1670 920}%
\special{fp}%
\special{pa 1670 920}%
\special{pa 1670 870}%
\special{fp}%
\put(15.4000,-9.0000){\makebox(0,0){$\cdots$}}%
\put(14.0000,-8.0000){\makebox(0,0){$*$}}%
\put(14.0500,-10.0200){\makebox(0,0){$*$}}%
\put(12.0500,-9.0200){\makebox(0,0){$*$}}%
\put(10.0500,-6.0200){\makebox(0,0){$*$}}%
\put(14.9500,-5.4200){\makebox(0,0){$*$}}%
%
\special{pn 8}%
\special{ar 735 772 1360 382  4.7123890 4.7812753}%
\special{ar 735 772 1360 382  4.8226071 4.8914935}%
\special{ar 735 772 1360 382  4.9328253 5.0017116}%
\special{ar 735 772 1360 382  5.0430434 5.1119297}%
\special{ar 735 772 1360 382  5.1532615 5.2221479}%
\special{ar 735 772 1360 382  5.2634797 5.3323660}%
\special{ar 735 772 1360 382  5.3736978 5.4425842}%
\special{ar 735 772 1360 382  5.4839160 5.5528023}%
\special{ar 735 772 1360 382  5.5941341 5.6630204}%
\special{ar 735 772 1360 382  5.7043522 5.7732386}%
\special{ar 735 772 1360 382  5.8145704 5.8834567}%
\special{ar 735 772 1360 382  5.9247885 5.9936749}%
\special{ar 735 772 1360 382  6.0350067 6.1038930}%
\special{ar 735 772 1360 382  6.1452248 6.2141111}%
\special{ar 735 772 1360 382  6.2554429 6.3243293}%
\special{ar 735 772 1360 382  6.3656611 6.4345474}%
\special{ar 735 772 1360 382  6.4758792 6.5447656}%
\special{ar 735 772 1360 382  6.5860974 6.6549837}%
\special{ar 735 772 1360 382  6.6963155 6.7652018}%
\special{ar 735 772 1360 382  6.8065336 6.8754200}%
\special{ar 735 772 1360 382  6.9167518 6.9856381}%
\special{ar 735 772 1360 382  7.0269699 7.0958563}%
\special{ar 735 772 1360 382  7.1371881 7.2060744}%
\special{ar 735 772 1360 382  7.2474062 7.3162925}%
\special{ar 735 772 1360 382  7.3576243 7.4265107}%
\special{ar 735 772 1360 382  7.4678425 7.5367288}%
\special{ar 735 772 1360 382  7.5780606 7.6469470}%
\special{ar 735 772 1360 382  7.6882788 7.7571651}%
\special{ar 735 772 1360 382  7.7984969 7.8539816}%
\put(5.2000,-8.7000){\makebox(0,0)[lb]{$h(A,t)$}}%
\put(16.8000,-14.8000){\makebox(0,0)[lt]{$a+2^{-1}t(g(A)\cup[3/4,1]e_m)$}}%
%
\special{pn 4}%
\special{pa 2050 1420}%
\special{pa 2050 970}%
\special{fp}%
\special{sh 1}%
\special{pa 2050 970}%
\special{pa 2030 1037}%
\special{pa 2050 1023}%
\special{pa 2070 1037}%
\special{pa 2050 970}%
\special{fp}%
\put(19.9000,-7.3000){\makebox(0,0){$*$}}%
\put(17.2000,-6.7000){\makebox(0,0){$*$}}%
\end{picture}%
\end{center}
\end{figure}

\medskip
For each $t > 0$,
 $\phi(A,t)$ has an isolated point because
 $\max \pr_m[\phi(A,t)]$ is attained by an isolated point of $\phi(A,t)$,
 where $\pr_m$ denotes the projection onto the $m$-th coordinate.
Hence, $\phi(A,t) \not\in \perf(\Rm)$.
Since $\img p{\varphi(A,t)} = \img p{h(A,t)}$ is finite
 and contains a point of $\R^{m-1} \setminus \img pD$,
 it follows that
 $\cl(\phi(A,t) \cap (\img pD \times \R)) \not= \phi(A,t)$,
 which means $\phi(A,t) \not\in \bcl(\img pD \times \R)$.

Given $\phi(A,t)$ for $t > 0$,
 we can find $t$ as the distance from
 $\max\pr_m[\phi(A,t)]$ to the interior of $\pr_m[\phi(A,t)]$.
Let $a_0 \in \phi(A,t)$ be such that
$$\pr_m(a_0) = \min\pr_m[\phi(A,t)] = \min\pr_m[h(A,t)].$$
Then, for sufficiently large $i$,
$$(a_0 + 2^{-i}t(g(A) \cup [3/4,1])e_m) \cap h(A,t) = \emptyset.$$
Thus, we can reconstruct $2^{-i}tg(A)$ and
 consequently also $g(A)$ from $\phi(A,t)$.
This shows that $\phi$ is a strict deformation.

For $\bcl(\Rm) \setminus \nwd(\Rm)$,
 we define a homotopy
 $\map\psi{\bcl_H(\Rm)\times[0,1]}{\bcl_H(\Rm)}$ as follows:
$$\psi(A,t) = h(A,t) + t\left(\bigcup_{i\in\omega}
 2^{-i}(g(A) \cup [3/4,1])e_m \cup \clbal(2e_m,1/2)\right).$$
In other wards,
 replacing the points $a + 2te_m \in \phi(A,t)$, $a \in h(A,t)$,
 by the closed balls
$$a + t\clbal(2e_m,1/2) = \clbal(a + 2te_m,t/2),\ a \in h(A,t),$$
 we can obtain $\psi(A,t)$ from $\phi(A,t)$.
Evidently $\psi$ omits $\nwd(\Rm)$.
Given $\psi(A,t)$ for $t > 0$,
 let $a_0 \in \psi(A,t)$ be such that
$$\pr_m(a_0) = \min\pr_m[\psi(A,t)] = \min\pr_m[h(A,t)].$$
Then we can get $t$ as the diameter of
 the ball $\clbal(a_0 + 2te_m,t/2)$
 (which is equal to $2/3$ of the distance from $a_0$ to this ball).
Now, by the same arguments as for $\phi$,
 we can reconstruct $g(A)$ from $\psi(A,t)$.
Thus, $\psi$ is a strict deformation.
\end{proof}

Let us note that the subspace $\uclb(\R)\cup\Fin(\R)$ is actually equal to the space $\poly(\R)$ consisting of all compact polyhedra in $\R$. It follows from the result of \cite{Sakai91} that the pair $\pair{\exp(\R)}{\poly(\R)}$ is homeomorphic to $\pair\Q{\Q_f}$.

\section{Nonseparable components of $\cld_H(\R)$}

In this section,
 we consider the space $\cld_H(\R)$ of all nonempty closed subsets of $\R$.
We shall also consider its natural subspaces,
 using the same notation as before,
 but having in mind the new setting.
For example,
 $\perf(\R)$ and $\nwd(\R)$ will denote the subspace of $\cld(\R)$
 consisting of all perfect closed subsets of $\R$
 and all closed sets with no interior points, respectively.
Now $\perf(\R) \cap \nwd(\R)$ consists of all nonempty closed
 (possibly unbounded) subsets of $\R$
 which have neither isolated points nor interior points.
In the new setting,
 we have
$$\Cantor(\R) = \perf(\R) \cap \nwd(\R) \cap \bcl(\R).$$

As shown in \cite[Proposition 7.2]{KuSaY},
 $\cld_H(\R)$ has $2^{\aleph_0}$ many components,
 $\bcl(\R)$ is the only separable one and
 any other component has weight $2^{\aleph_0}$.
The following is the main theorem in this section:

\begin{tw}\label{non-sep-compon}
Let\/ $\Ha$ be a nonseparable component of\/ $\cld_H(\R)$
 which does not contain\/ $\R$, $[0,+\infty)$, $(-\infty,0]$.
Then $\Ha \topiso \ell_2(2^{\aleph_0})$.
\end{tw}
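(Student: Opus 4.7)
The plan is to apply Toruńczyk's topological characterization of Hilbert space: a completely metrizable absolute retract $X$ of weight $\kappa \geq \aleph_0$ is homeomorphic to $\ell_2(\kappa)$ if and only if $X$ has the \emph{discrete approximation property} (DAP) --- i.e.\ for every sequence of maps $f_n \colon K_n \to X$ from compact metric spaces and every open cover $\Yu$ of $X$ there exist maps $g_n \colon K_n \to X$, each $\Yu$-close to $f_n$, such that $\{g_n(K_n) : \ntr\}$ is a discrete family in $X$. I would apply this criterion with $\kappa = 2^{\aleph_0}$.

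The easy hypotheses follow from material already assembled. The component $\Ha$ is clopen in $\cld_H(\R)$ (open by the triangle inequality for $d_H$, closed as a component), hence inherits complete metrizability via Fact~\ref{complete-separable}(i). The weight of $\Ha$ equals $2^{\aleph_0}$ by \cite[Proposition 7.2]{KuSaY}. For the AR property, the Euclidean metric on $\R$ is almost convex, and as in Proposition~\ref{sdgeriphgpwo} the formula $h(A, t) = \clbal(A, t)$ defines a continuous homotopy on $\cld_H(\R)$ with $d_H(A, h(A, t)) \loe t$; this homotopy preserves components and restricts to a map $\Ha \times [0, 1] \to \Ha$, so $\uclb(\R) \cap \Ha$ is homotopy dense in $\Ha$. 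Repeating the Antosiewicz--Cellina style argument of \cite{CK} inside the single component $\Ha$, where $d_H$ is finite-valued, then yields that $\Ha$ is an AR.

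The principal obstacle is the DAP. The hypothesis that $\Ha$ avoids $\R$, $[0, \infty)$ and $(-\infty, 0]$ supplies the essential geometric input: for any fixed $A_0 \in \Ha$ there is a sequence of pairwise disjoint open intervals $I_k = (a_k, b_k) \subs \R \setminus A_0$ with $b_k - a_k \to \infty$, receding to infinity along each direction in which $A_0$ is unbounded. Given maps $f_n \colon K_n \to \Ha$ with $f_n(K_n) \subs \bal(A_0, r_n)$ and a cover $\Yu$ with positive Lebesgue number $\eta_n$ on the compact $f_n(K_n)$, one selects indices $k_n \to \infty$ with the intervals $I_{k_n}$ pairwise disjoint and with $b_{k_n} - a_{k_n}$ dwarfing $r_n$. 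The map $g_n$ would then be built by a continuous perturbation of $f_n$ that stamps each $f_n(x)$ with an $n$-distinctive \emph{signature} localized across the gap $I_{k_n}$, the continuity being obtained by Michael-style selection from the continuous distance functionals $C \mapsto \dist(a_{k_n}, C)$ and $C \mapsto \dist(b_{k_n}, C)$.

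The main technical difficulty is designing these signatures so that the Hausdorff perturbation stays within $\eta_n$ (for $\Yu$-closeness) yet the resulting element of $\Ha$ carries an $n$-feature that is Hausdorff-far from $\clbal(A_0, s)$ for every prescribed $s$ once $n$ is large. This tension is resolved by exploiting the fact that, for $b_{k_n} - a_{k_n} \gg r_n$, each $f_n(x)$ already has points in the $r_n$-neighborhoods of both endpoints of $I_{k_n}$, so a Hausdorff-small perturbation can redistribute this mass across the span of $I_{k_n}$ in an $n$-coded fashion whose signature sits at least a fixed fraction of $b_{k_n} - a_{k_n}$ away from $A_0$. Once this is achieved, discreteness of $\{g_n(K_n)\}$ is immediate: any fixed $B \in \Ha$ lies in some $\clbal(A_0, s)$, hence cannot meet the central portion of $I_{k_n}$ for $n$ with $b_{k_n} - a_{k_n} > 2s + 2$, so $g_n(K_n)$ is Hausdorff-far from $B$ for all but finitely many $n$, completing the verification of Toruńczyk's criterion.
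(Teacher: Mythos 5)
The characterization you invoke is not Toruńczyk's criterion for nonseparable Hilbert space, and this is the decisive gap. For weight $2^{\aleph_0}$ one must verify, besides complete metrizability and the AR property, the two discrete approximation conditions actually used in the paper: (i) every map $f\colon [0,1]^n\times 2^\omega\to X$ (with $2^\omega$ discrete, i.e.\ continuum many cubes simultaneously) can be approximated by a map $g$ such that $\{g[[0,1]^n\times\{x\}]:x\in 2^\omega\}$ is discrete, and (ii) maps of countable discrete unions of finite-dimensional simplicial complexes of cardinality $\loe 2^{\aleph_0}$ can be approximated so that the images form a discrete family. Your ``DAP'' with countably many compact (hence separable) domains is the separable-$\ell_2$ condition; it cannot detect weight $2^{\aleph_0}$ (it does not even force local weight $2^{\aleph_0}$, which an $\ell_2(2^{\aleph_0})$-manifold must have), so the theorem you propose to apply is false as stated. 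Correspondingly, your verification only separates countably many images, whereas the actual work is to produce, near each $A_0$, continuum many maps $f_x$, $x\in2^\omega$, each $\alpha$-close to the identity, with $\{f_x[\Yu]\}_{x\in2^\omega}$ discrete.

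The construction you sketch also fails on a metric point. If $d_H(f_n(z),A_0)\loe r_n$, then every point of the central portion of a gap $I_{k_n}$ of length much larger than $r_n$ lies at distance about $\tfrac12(b_{k_n}-a_{k_n})-r_n$ from $f_n(z)$; adding a ``signature'' there moves the set by exactly that amount in the Hausdorff metric, so a perturbation whose feature sits ``a fixed fraction of $b_{k_n}-a_{k_n}$ away from $A_0$'' cannot stay $\eta_n$-close to $f_n$. The paper avoids this tension entirely: it fixes one neighborhood $\Yu$ of $A_0$ governed by a single uniform gap scale $\delta$, uses the $1$-Lipschitz functionals $r_n(A)=\max(A\cap(b_{n-1},a_n))$, and codes a binary sequence $x\in2^\omega$ by attaching to each $r_n(A)$ a set of diameter $\beta(A)<\tfrac12\alpha(A)$ (the interval $[0,\tfrac12\beta(A)]$ plus, when $x(n)=1$, the point $\beta(A)$); closeness is then automatic, the mutual Hausdorff separation of distinct $f_x,f_y$ is of order $\beta/4$, and discreteness of the continuum family follows from continuity of $\beta$, not from markers escaping to infinity. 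Note also the paper's global scheme, which you would still need in some form: it proves that the open set $\Vee$ of sets with infinite uniform gaps is an $\ell_2(2^{\aleph_0})$-manifold, shows $\Ha\subs\Vee$, and then concludes from the fact (Theorem A of \cite{KuSaY}) that $\Ha$ is a contractible AR, since a contractible $\ell_2(2^{\aleph_0})$-manifold is homeomorphic to $\ell_2(2^{\aleph_0})$.
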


We shall say that a set $A\subs\R$ {\em has infinite uniform gaps}
 if there are $\delta>0$ and pairwise disjoint open intervals $I_0,I_1,\dots$
 such that $\diam I_n \goe \delta$, $A \cap I_n = \emptyset$
 and $\bd I_n \subs A$ for every $\ntr$.
Define
$$\Vee = \setof{A\in\cld(\R)}{A\text{ has infinite uniform gaps }}.$$
Clearly, $\Vee$ is open in $\cld_H(\R)$
 and $\Vee \cap \bcl(\R) = \emptyset$.
For each $A \in \cld(\R) \setminus \bcl(\R)$ and $\varepsilon > 0$,
 let $D \subs A$ be a maximal $\varepsilon$-discrete subset.
Then $D \in \Vee$ and $d_H(A,D) \loe \varepsilon$
 because $D \subs A \subs \bal(D,\varepsilon)$.
Thus, $\Vee$ is dense in $\cld_H(\R)\setminus\bcl(\R)$.

If $\Ha$ is a nonseparable component of $\cld_H(\R)$
 and $\R,[0,+\infty),(-\infty,0]\notin \Ha$
 then $\Ha \subs \Vee$.
Indeed, each $A \in \Ha$ is unbounded and
 every component of $\R \setminus A$ is an open interval.
Let $\mathcal J$ be the set of all bounded component of $\R \setminus A$.
Assume that $\setof{\diam I}{I \in \mathcal J}$ is bounded.
When $A$ is bounded below (or bounded above),
 $d_H(A, [0,\infty)) < \infty$ (or $d_H(A, (-\infty,0]) < \infty$),
 which implies $[0,+\infty) \in \Ha$ (or $(-\infty,0] \in \Ha$).
When $A$ is not bounded below nor above,
 $d_H(A,\R) < \infty$,
 which implies $\R \in \Ha$.
Therefore, $\setof{\diam I}{I \in \mathcal J}$ is unbounded.
In particular, $A$ has infinite uniform gaps.

Due to Theorem A in \cite{KuSaY},
 every component of $\cld_H(\R)$ is an AR,
 hence it is contractible.
Since a contractible $\ell_2(2^{\aleph_0})$-manifold
 is homeomorphic to $\ell_2(2^{\aleph_0})$,
 Theorem \ref{non-sep-compon} above follows from the following theorem:

\begin{tw}
The open dense subset $\Vee$ of\/ $\cld_H(\R)$
 is an $\ell_2(2^{\aleph_0})$-manifold.
\end{tw}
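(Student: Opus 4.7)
The plan is to apply Toru\'nczyk's topological characterization of $\ell_2(\kappa)$-manifolds with $\kappa = 2^{\aleph_0}$, checking three properties of $\Vee$: (i) it is a completely metrizable ANR, (ii) it has weight $2^{\aleph_0}$, and (iii) it satisfies the strong discrete approximation property (SDAP) for $\ell_2(2^{\aleph_0})$.

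I would dispatch (i) and (ii) first, as they are comparatively quick. Complete metrizability follows from Fact~\ref{complete-separable}(i) applied to $(\R, |\cdot|)$ together with the fact that $\Vee$ is open in $\cld_H(\R)$. For the ANR property I would invoke Theorem~A of \cite{KuSaY} (every component of $\cld_H(\R)$ is an AR) combined with local path-connectedness of $\cld_H(\R)$ at each point of $\Vee$ (verified via short paths in the Hausdorff metric built from the uniform-gap witness, for instance using the homotopy $t \mapsto \clbal(A, t)$ and its variants). This makes each component meeting $\Vee$ open, so $\Vee$ is locally AR and hence ANR. For (ii), every $A \in \Vee$ lies in a nonseparable component, which has weight $2^{\aleph_0}$ by \cite[Proposition 7.2]{KuSaY}, while $|\cld(\R)| \loe 2^{\aleph_0}$ supplies the upper bound.

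The main work is in (iii), and the central tool is a local \emph{box-product decomposition} of Hausdorff balls. Given $A \in \Vee$ with witness $\delta > 0$ and an enumeration $(I_n)_{\ntr}$ of gaps of $A$ with $\diam I_n \goe \delta$, these large gaps partition $\R$ into consecutive compact blocks $K_n$. For $\eps \in (0, \delta/3)$, every $B$ with $d_H(A, B) < \eps$ admits a canonical decomposition $B = \bigsqcup_n B_n$ with $B_n \in \exp(K_n)$ nonempty, and $d_H(A,B) = \sup_n d_H(A_n, B_n)$, where $A_n = A \cap K_n$. Thus the Hausdorff ball $\bal_{d_H}(A, \eps)$ is isometric to an open subset of the sup-metric product $\prod_n \exp(K_n)$ of countably many Hilbert-cube hyperspaces. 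The sup metric (rather than the product topology) turns this countable product into a nonseparable space of weight $2^{\aleph_0}$, and it is precisely this structure that will furnish the asymptotic freedom required for SDAP.

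Using this local structure, I would verify SDAP blockwise: given a family $\{f_\alpha : Y_\alpha \to \Vee\}_{\alpha < 2^{\aleph_0}}$ and an open cover $\mathcal U$, independently perturb each $f_\alpha$ inside each block $K_n$ so as to assign it a distinctive asymptotic sequence of values across blocks, using that $\omega$-indexed sequences of choices inside Hilbert cubes produce continuum-many distinct asymptotic behaviors. The main obstacle will be arranging these blockwise perturbations coherently across all $2^{\aleph_0}$-many maps while keeping each $g_\alpha$ uniformly close to $f_\alpha$ in the sup metric and respecting $\mathcal U$. I expect this to require a transfinite induction along a well-ordering of $2^{\aleph_0}$, at each step invoking the infinite-dimensional Hilbert-cube freedom inside each block to avoid collisions with previously constructed images.
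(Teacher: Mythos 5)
Your overall strategy (verify Toru\'nczyk's characterization locally, and encode points of the discrete space $2^\omega$ into the asymptotic pattern of the uniform gaps) is the same one the paper uses, and steps (i) and (ii) are essentially fine. The genuine gap is in (iii), which is the entire content of the theorem: you never actually produce a mechanism that makes a continuum-sized family of perturbed images \emph{discrete} in $\Vee$. A transfinite induction of length $2^{\aleph_0}$ that at each stage ``avoids collisions with previously constructed images'' can at best give pairwise disjointness (or pairwise positive distance); discreteness requires that every point of the neighborhood $\Yu$ have a single neighborhood meeting at most one image, uniformly over all $2^{\aleph_0}$ maps, and stagewise avoidance does not accumulate to such a uniform local statement. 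The paper's proof replaces this by one explicit recipe on a ball $\Yu$ around $A_0$: continuous marker points $r_n(A)$ (the largest point of $A$ in a fixed window to the left of the $n$-th big gap of $A_0$) and a continuous scale $\beta(A) \loe \alpha(A)/2$, with $f_x(A)=A\cup\bigcup_{\ntr}\bigl(r_n(A)+([0,\beta(A)/2]\cup\{\beta(A)x(n)\})\bigr)$. The point is that the code and the scale are canonically recoverable from $f_x(A)$, which yields the uniform two-set separation $d_H(f_x(A),f_y(A'))\goe\min\{\beta(A)/4,\beta(A')/4\}$ for all $x\ne y$ and \emph{all} $A,A'\in\Yu$, and then a short limiting argument (if $f_{x_i}(A_i)\to A$ with distinct $x_i$, the separation forces $\beta(A_i)\to 0$, hence $A_i\to A$, hence $\beta(A)=0$ by continuity, a contradiction) upgrades this to discreteness of $\{f_x[\Yu]\}_{x\in 2^\omega}$. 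Your sketch contains neither the separation for pairs of \emph{different} sets $A,A'$ (which is where the case analysis lives) nor the continuity-of-scale argument; ``continuum-many distinct asymptotic behaviors'' by itself does not rule out images accumulating on one another.

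A secondary, repairable flaw is the box-product decomposition itself. The blocks cut out by the large gaps of $A$ need not be compact: an $A\in\Vee$ may be unbounded in both directions with its uniform gaps accumulating at only one end (e.g.\ $\setof{-n}{\ntr}\cup\setof{n^2}{\ntr}$), so one block is a closed half-line and its hyperspace is not a Hilbert cube; also, sets $B$ with $d_H(A,B)<\eps$ may have points inside the gaps (within $\eps$ of their endpoints), so the clause $B_n\in\exp(K_n)$ is not literally correct without enlarging the blocks. More to the point, no such decomposition is needed: the paper works directly with one marker per gap, so fixing the decomposition would still leave the discreteness argument, which is the missing ingredient, to be supplied.
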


\begin{proof}
It suffices to show that
 each $A_0 \in \Vee$ has an open neighborhood $\Yu\subs\Vee$
 which is an $\ell_2(2^{\aleph_0})$-manifold.
In this case,
 $\Yu$ is a completely metrizable ANR
 because it is an open set in a completely metrizable ANR $\cld_H(\R)$.
Due to Toru{\'n}czyk characterization of $\ell_2(2^{\aleph_0})$-manifold
 \cite{To81} (cf.\ \cite{To85}),
 we have to show that $\Yu$ has the following two properties:
\begin{romanenume}
\item
 For each maps $f : [0,1]^n \times 2^\omega \to \Yu$
 and $\alpha : \Yu \to (0,1)$,
 there exists a map $g : [0,1]^n \times 2^\omega \to \Yu$ such that
 $d_H(g(z),f(z)) < \alpha(f(z))$ for each $z \in [0,1]^n \times 2^\omega$
 and $\{g[[0,1]^n \times \{x\}] : x \in 2^\omega\}$ is discrete in $\Yu$;
\item
 For any finite-dimensional simplicial complexes $K_n$, $n \in \omega$,
 with $\card K_n \loe 2^{\aleph_0}$,
 for every maps $f : \bigoplus_{n\in\omega} |K_n| \to \Yu$
 and $\alpha : \Yu \to (0,1)$,
 there exists a map $g : \bigoplus_{n\in\omega} |K_n| \to \Yu$ such that
 $d_H(g(z),f(z))$ $ < \alpha(f(z))$ for each $z \in \bigoplus_{n\in\omega} |K_n|$
 and $\{g[|K_n|] : n \in \omega\}$ is discrete in $\Yu$.
\end{romanenume}
In the above, $2^\omega$ is the discrete space of all functions
 of $\omega$ to $2 = \{0,1\}$.
To this end,
 it suffices to prove the following:
\begin{itemize}
\item
 For each map $\alpha : \Yu \to (0,1)$,
 there exist maps $f_x : \Yu \to \Yu$, $x \in 2^\omega$,
 such that $d_H(f_x(A),A) < \alpha(A)$ for every $A \in \Yu$
 and $\{f_x[\Yu] : x \in 2^\omega\}$ is discrete.
\end{itemize}

Fix $A_0 \in \Vee$ and choose open intervals $I_0,I_1,\dots$
 such that $\diam I_n \goe \delta$, $A_0 \cap I_n = \emptyset$
 and $\bd I_n \subs A_0$ (i.e., $\inf I_n,\ \sup I_n \in A_0$)
 for every $\ntr$.
Taking a subsequence if necessary,
 we may assume that either $\sup I_n < \inf I_{n+1}$ for every $\ntr$
 or $\inf I_n > \sup I_{n+1}$ for every $\ntr$.
Because of similarity,
 we may assume that the first possibility occurs.

Choose intervals $[a_n,b_n] \subs I_n$, $\ntr$,
 so that $b_n - a_n > \delta/4$,
\begin{gather*}
\inf_{n\in\omega}\dist(a_n,\R \setminus I_n)
 = \inf_{n\in\omega}(a_n - \inf I_n) > \delta/4 \text{ and}\\
\inf_{n\in\omega}\dist(b_n,\R \setminus I_n)
 = \inf_{n\in\omega}(\sup I_n - b_n) > \delta/4.
\end{gather*}
\begin{figure}[hbtp]
\begin{center}\small
\unitlength 0.1in
\begin{picture}(42.15,5.10)(3.85,-9.70)
%
\special{pn 8}%
\special{pa 400 800}%
\special{pa 1400 800}%
\special{da 0.070}%
%
\special{pn 8}%
\special{pa 2200 800}%
\special{pa 3600 800}%
\special{da 0.070}%
%
\special{pn 8}%
\special{pa 1400 800}%
\special{pa 2200 800}%
\special{dt 0.045}%
\special{pa 2200 800}%
\special{pa 2199 800}%
\special{dt 0.045}%
%
\special{pn 8}%
\special{pa 3600 800}%
\special{pa 4400 800}%
\special{dt 0.045}%
\special{pa 4400 800}%
\special{pa 4399 800}%
\special{dt 0.045}%
\put(9.0000,-6.3000){\makebox(0,0)[lb]{$I_{n-1}$}}%
\put(29.0000,-6.3000){\makebox(0,0)[lb]{$I_n$}}%
%
\special{pn 20}%
\special{sh 1}%
\special{ar 1400 800 10 10 0  6.28318530717959E+0000}%
%
\special{pn 20}%
\special{sh 1}%
\special{ar 2200 800 10 10 0  6.28318530717959E+0000}%
%
\special{pn 20}%
\special{sh 1}%
\special{ar 3600 800 10 10 0  6.28318530717959E+0000}%
%
\special{pn 20}%
\special{sh 1}%
\special{ar 1670 800 10 10 0  6.28318530717959E+0000}%
%
\special{pn 20}%
\special{sh 1}%
\special{ar 2070 800 10 10 0  6.28318530717959E+0000}%
%
\special{pn 20}%
\special{sh 1}%
\special{ar 2010 800 10 10 0  6.28318530717959E+0000}%
%
\special{pn 20}%
\special{pa 1730 800}%
\special{pa 1910 800}%
\special{fp}%
%
\special{pn 20}%
\special{pa 3840 800}%
\special{pa 4020 800}%
\special{fp}%
%
\special{pn 20}%
\special{sh 1}%
\special{ar 4070 800 10 10 0  6.28318530717959E+0000}%
%
\special{pn 4}%
\special{pa 1400 850}%
\special{pa 1400 900}%
\special{fp}%
\special{pa 2200 900}%
\special{pa 2200 850}%
\special{fp}%
\special{pa 1400 900}%
\special{pa 2200 900}%
\special{fp}%
\put(20.0000,-9.7000){\makebox(0,0)[lt]{$A_0$}}%
%
\special{pn 4}%
\special{pa 3600 850}%
\special{pa 3600 900}%
\special{fp}%
\special{pa 3600 900}%
\special{pa 4400 900}%
\special{fp}%
\special{pa 4400 900}%
\special{pa 4400 850}%
\special{fp}%
\put(42.0000,-9.7000){\makebox(0,0)[lt]{$A_0$}}%
\put(25.0000,-9.3000){\makebox(0,0)[lt]{$a_n$}}%
\put(32.5000,-9.0000){\makebox(0,0)[lt]{$b_n$}}%
\put(10.5000,-9.0000){\makebox(0,0)[lt]{$b_{n-1}$}}%
\put(5.5000,-9.3000){\makebox(0,0)[lt]{$a_{n-1}$}}%
%
\special{pn 20}%
\special{sh 1}%
\special{ar 4400 800 10 10 0  6.28318530717959E+0000}%
%
\special{pn 20}%
\special{pa 4180 800}%
\special{pa 4260 800}%
\special{fp}%
%
\special{pn 20}%
\special{pa 1540 800}%
\special{pa 1600 800}%
\special{fp}%
%
\special{pn 8}%
\special{pa 4400 800}%
\special{pa 4600 800}%
\special{fp}%
%
\special{pn 4}%
\special{pa 400 750}%
\special{pa 400 700}%
\special{fp}%
\special{pa 400 700}%
\special{pa 1400 700}%
\special{fp}%
\special{pa 1400 700}%
\special{pa 1400 750}%
\special{fp}%
%
\special{pn 4}%
\special{pa 2200 750}%
\special{pa 2200 700}%
\special{fp}%
\special{pa 2200 700}%
\special{pa 3600 700}%
\special{fp}%
\special{pa 3600 700}%
\special{pa 3600 750}%
\special{fp}%
\put(25.0000,-8.0000){\makebox(0,0){$\circ$}}%
\put(33.0000,-8.0000){\makebox(0,0){$\circ$}}%
\put(11.0000,-8.0000){\makebox(0,0){$\circ$}}%
\put(7.0000,-8.0000){\makebox(0,0){$\circ$}}%
\end{picture}%
\end{center}
\end{figure}

\bigskip\noindent
Observe that if $A \in \cld_H(\R)$ and $d_H(A,A_0) < \delta/4$
 then $A \cap (b_{n-1},a_n) \not= \emptyset$
 for every $n \in \omega$,
 where $b_{-1} = -\infty$.
For each $A \in \cld_H(\R)$ with $d_H(A,A_0) < \delta/4$,
 we can define
$$r_n(A) = \max(A \cap (b_{n-1},a_n)),\ \ntr.$$
For each $A, A' \in \cld_H(\R)$
 with $d_H(A,A_0), d_H(A',A_0) < \delta/4$,
 we have
$$|r_n(A) - r_n(A')| \leqslant d_H(A,A').$$
Indeed, without loss of generality, we may assume that $r_n(A) < r_n(A')$.
Then, the open interval $(r_n(A),b_n)$ contains no points of $A$
 and $r_n(A') \in (r_n(A),b_n)$.
Since $b_n - r_n(A') > \delta/2$ and
$$r_n(A') - r_n(A)
 \leqslant |r_n(A') - r_n(A_0)| + |r_n(A) - r_n(A_0)| < \delta/2,$$
 we have $|r_n(A') - r_n(A)| \leqslant d_H(A,A')$.
Then, it follows that
\begin{align*}
\inf_{n\in\omega}(a_n - r_n(A)) - d_H(A,A')
 &\leqslant \inf_{n\in\omega}(a_n - r_n(A')) \\
 &\leqslant \inf_{n\in\omega}(a_n - r_n(A)) + d_H(A,A').
\end{align*}
This means that $A \mapsto \inf_{n\in\omega}(a_n - r_n(A))$ is continuous.
Since $r_n(A_0) = \inf I_n$,
 we have $\inf_{n\in\omega}(a_n - r_n(A_0)) > \delta/4$.
Thus, $A_0$ has the following open neighborhood:
$$\Yu = \setof{A\in\cld_H(\R)}{d_H(A,A_0) < \delta/4,\
 \inf_{n\in\omega}(a_n - r_n(A)) > \delta/4} \subs \Vee.$$

Now, for each map $\map \alpha\Yu{(0,1)}$,
 we define a map $\beta : \Yu \to (0,1)$ as follows:
$$\beta(A) = \min\big\{\tfrac12\alpha(A),\ \tfrac14\delta - d_H(A,A_0),\
 \inf_{n\in\omega}(a_n - r_n(A)) - \tfrac14\delta\big\}.$$
Given a sequence $x = (x(n))_{\ntr} \in 2^\omega$, let
$$f_x(A) = A \cup \bigcup_{\ntr}\big(r_n(A) +
 \big([0,\tfrac12\beta(A)] \cup \{\beta(A)\cdot x(n)\}\big)\big).$$
\begin{figure}[hbtp]
\begin{center}\small
\unitlength 0.1in
\begin{picture}(29.15,7.30)(4.85,-13.00)
%
\special{pn 4}%
\special{pa 1440 1050}%
\special{pa 1440 1100}%
\special{fp}%
\special{pa 2240 1100}%
\special{pa 2240 1050}%
\special{fp}%
\special{pa 1440 1100}%
\special{pa 2240 1100}%
\special{fp}%
\put(7.0000,-10.9000){\makebox(0,0)[lt]{$b_{n-1}$}}%
%
\special{pn 8}%
\special{pa 600 1000}%
\special{pa 3400 1000}%
\special{dt 0.045}%
\special{pa 3400 1000}%
\special{pa 3399 1000}%
\special{dt 0.045}%
%
\special{pn 20}%
\special{sh 1}%
\special{ar 1640 1000 10 10 0  6.28318530717959E+0000}%
%
\special{pn 20}%
\special{sh 1}%
\special{ar 2010 1003 10 10 0  6.28318530717959E+0000}%
%
\special{pn 20}%
\special{pa 1730 1003}%
\special{pa 1910 1003}%
\special{fp}%
%
\special{pn 20}%
\special{pa 1540 1003}%
\special{pa 1600 1003}%
\special{fp}%
%
\special{pn 8}%
\special{pa 2240 980}%
\special{pa 2420 980}%
\special{fp}%
\put(30.5000,-11.0000){\makebox(0,0)[lt]{$a_n$}}%
\put(14.5000,-12.0000){\makebox(0,0)[lt]{$A\cap(b_{n-1},a_n)$}}%
%
\special{pn 20}%
\special{sh 1}%
\special{ar 1440 1000 10 10 0  6.28318530717959E+0000}%
%
\special{pn 4}%
\special{pa 2020 770}%
\special{pa 2220 970}%
\special{fp}%
\special{sh 1}%
\special{pa 2220 970}%
\special{pa 2187 909}%
\special{pa 2182 932}%
\special{pa 2159 937}%
\special{pa 2220 970}%
\special{fp}%
\put(20.5000,-7.4000){\makebox(0,0)[rb]{$r_n(A)$}}%
%
\special{pn 4}%
\special{pa 2640 1260}%
\special{pa 2640 1060}%
\special{fp}%
\special{sh 1}%
\special{pa 2640 1060}%
\special{pa 2620 1127}%
\special{pa 2640 1113}%
\special{pa 2660 1127}%
\special{pa 2640 1060}%
\special{fp}%
\put(24.0000,-13.0000){\makebox(0,0)[lt]{$r_n(A) + \beta(A)$}}%
%
\special{pn 4}%
\special{pa 2420 760}%
\special{pa 2320 960}%
\special{fp}%
\special{sh 1}%
\special{pa 2320 960}%
\special{pa 2368 909}%
\special{pa 2344 912}%
\special{pa 2332 891}%
\special{pa 2320 960}%
\special{fp}%
\put(24.2000,-7.4000){\makebox(0,0)[lb]{$r_n(A) + [0,\frac12\beta(A)]$}}%
%
\special{pn 20}%
\special{sh 1}%
\special{ar 2200 1000 10 10 0  6.28318530717959E+0000}%
%
\special{pn 20}%
\special{sh 1}%
\special{ar 2150 1000 10 10 0  6.28318530717959E+0000}%
\put(26.4000,-10.0000){\makebox(0,0){$*$}}%
%
\special{pn 8}%
\special{pa 2240 1020}%
\special{pa 2420 1020}%
\special{fp}%
\put(8.0000,-10.0000){\makebox(0,0){$\circ$}}%
\put(30.8000,-10.0000){\makebox(0,0){$\circ$}}%
\end{picture}%
\end{center}
\end{figure}

\noindent
This defines a map $\map{f_x}\Yu\Yu$ which is $\alpha$-close to $\id$.
We claim that
 if $x \not= y \in 2^\omega$
 then
$$d_H(f_x(A),f_y(A')) \goe \min\big\{\tfrac14\beta(A),\tfrac14\beta(A')\big\}
 \text{ for every $A, A' \in \Yu$.}$$
Indeed, assume that $x(n) = 1$, $y(n) = 0$
 and let $s = \min\{\tfrac14\beta(A),\tfrac14\beta(A')\}$.
Then
\begin{enumerate}
\item
 $\max(f_x(A) \cap (b_{n-1},a_n)) = r_n(A)+\beta(A)$;
\item
 $f_x(A)$ has no points in the open interval
 $(r_n(A)+\tfrac12\beta(A), r_n(A)+\beta(A))$;
\item
 $\max(f_y(A') \cap (b_{n-1},a_n)) = r_n(A')+\tfrac12\beta(A')$;
\item
 $[r_n(A'),r_n(A')+\beta(A')/2] \subs f_y(A')$.
\end{enumerate}
In case $r_n(A')+\tfrac12\beta(A') \goe r_n(A)+\beta(A)+s$ or
 $r_n(A')+\tfrac12\beta(A') \loe r_n(A)+\beta(A)-s$,
 we have
$$d_H(f_x(A) \cap (b_{n-1},a_n),f_y(A') \cap (b_{n-1},a_n)) \goe s.$$
In case $r_n(A)+\beta(A)-s < r_n(A')+\tfrac12\beta(A') \loe r_n(A)+\beta(A)+s$,
 since $2s \loe \tfrac12\beta(A')$,
 we have $r_n(A') < r_n(A)+\beta(A)-s$,
 hence $r_n(A)+\beta(A)-s \in f_y(A')$.
Thus, it follows that
$$d_H(f_x(A) \cap (b_{n-1},a_n),f_y(A') \cap (b_{n-1},a_n)) \goe s.$$

Finally, we show that
 $\{f_x[\Yu] : x \in 2^\omega\}$ is a discrete collection of $\Yu$.
If not, we have $A$, $A_i \in \Yu$ and $x_i \in 2^\omega$, $i \in \omega$,
 such that $x_i \not= x_j$ if $i \not= j$,
 and $f_{x_i}(A_i) \to A$ ($i \to \infty$).
Then $c = \inf_{i\in\omega}\beta(A_i) = 0$.
Indeed, otherwise we could find $i < j$ such that
 $$d_H(f_{x_i}(A_i),A),\ d_H(f_{x_j}(A_j),A) < c/10$$ and
 $\beta(A_i),\ \beta(A_j) > 4c/5$.
It follows that $d_H(f_{x_i}(A_i),f_{x_j}(A_j)) < c/5$, but
$$d_H(f_{x_i}(A_i),f_{x_j}(A_j)) \goe \min\{\beta(A)/4,\beta(A')/4\}
 > c/5,$$
 which is a contradiction.
Thus, $\inf_{i\in\omega}\beta(A_i) = 0$.
Taking a subsequence,
 we may assume that $\lim_{i\to\infty}\beta(A_i) = 0$.
Then $A_i \to A$ ($i \to \infty$)
 because $d_H(f_{x_i}(A_i),A_i) \loe \beta(A_i)$.
It follows that $\beta(A) = 0$,
 which is a contradiction.
This completes the proof.
\end{proof}

Let $\Dee(X)$ be the subspace of $\cld_H(X)$ consisting of
 all discrete sets in $X$.
It follows from the result of \cite{BaVo} that
 $\Dee(X)$ is homotopy dense in $\cld_H(X)$
 for every almost convex metric space $X$.
By the same proof,
 Lemma \ref{sdegfaqqfas} can be extended to $\cld_H(\Rm)$.

\begin{prop}\label{h-dense-D}
Assume $D\subs \Rm$ is such that $\Rm\setminus D$ is dense.
Then $\Dee(\Rm)\setminus\cld(D)$ is homotopy dense in $\cld_H(\Rm)$.
\end{prop}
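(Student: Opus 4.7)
The plan is to mirror the proof of Lemma~\ref{sdegfaqqfas}, replacing $\Fin(\Rm)$ by $\Dee(\Rm)$ and $\bcl_H(\Rm)$ by $\cld_H(\Rm)$. Write $\Ha = \Dee(\Rm) \setminus \cld(D)$. The outline is to verify the three hypotheses required by \cite[Theorem~5.1]{KSY}: (a) $\Ha$ is closed under finite unions, (b) $\Ha$ is dense in $\cld_H(\Rm)$, and (c) $\Ha$ is relatively $LC^0$ in $\cld_H(\Rm)$. The ambient Lawson semilattice structure on $\cld_H(\Rm)$ with respect to $\cup$ is obtained exactly as for $\bcl_H(\Rm)$: the union operation is $1$-Lipschitz in each argument, and every open Hausdorff ball is a subsemilattice. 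Note that the union of two closed discrete subsets of $\Rm$ is closed and discrete, since every bounded region of $\Rm$ meets each of them in finitely many points; combined with the observation that $A \cup B$ still meets $\Rm \setminus D$ whenever $A$ does, this gives~(a).

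For~(b), fix $A \in \cld_H(\Rm)$ and $\eps > 0$. Choose a maximal $(\eps/2)$-discrete subset $E \subs A$; then $E \in \Dee(\Rm)$ and $d_H(A,E) \loe \eps/2$. Since $\Rm \setminus D$ is dense, we can replace one point $q \in E$ with a nearby point $q' \in \Rm \setminus D$ such that $\|q - q'\| < \eps/4$ and $E' = (E \setminus \{q\}) \cup \{q'\}$ remains discrete; then $E' \in \Ha$ and $d_H(A, E') < \eps$.

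For~(c), which is the main step, fix $A \in \cld_H(\Rm)$, $\eps > 0$, and $A_0, A_1 \in \bal_{d_H}(A,\eps/2) \cap \Ha$. As in Lemma~\ref{sdegfaqqfas}, it suffices to join $A_0$ to $A_0 \cup A_1$ (and symmetrically $A_0 \cup A_1$ to $A_1$) by a path staying in $\bal_{d_H}(A,\eps) \cap \Ha$. Enumerate $A_1 = \{p_i\}_{i \in I}$ ($I$ countable, since $A_1$ is closed discrete in $\Rm$); for each $i \in I$ pick $q_i \in A_0$ with $\|p_i - q_i\| < \eps/2$, and set
$$h(t) = A_0 \cup \setof{(1-t)q_i + tp_i}{i \in I}, \qquad t \in [0,1].$$
Then $h(0) = A_0$ and $h(1) = A_0 \cup A_1$, $A_0 \subs h(t)$ ensures $h(t) \in \Ha$ provided $h(t) \in \cld(\Rm)$, and $d_H(h(t), A_0) < \eps/2$, so $h(t) \in \bal_{d_H}(A,\eps)$. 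The continuity of $h$ follows from the uniform bound $\|p_i - q_i\| < \eps/2$ via $d_H(h(t), h(s)) \loe (\eps/2)|t-s|$.

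The only real obstacle is verifying that each $h(t)$ is a closed discrete subset of $\Rm$, since $I$ may be infinite. This is handled by a local finiteness argument: any bounded ball $B \subs \Rm$ contains only finitely many points of $A_0$ and only finitely many $p_i$ (because $A_1$ is closed discrete), hence only finitely many $q_i$ (as $\|p_i - q_i\| < \eps/2$) and therefore only finitely many of the segments $[q_i, p_i]$ meet $B$. Thus $h(t) \cap B$ is finite, so $h(t)$ is closed and discrete in $\Rm$. With (a), (b), (c) established, \cite[Theorem~5.1]{KSY} delivers the homotopy density of $\Ha$ in $\cld_H(\Rm)$.
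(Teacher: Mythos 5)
Your proof is correct and follows essentially the same route the paper intends: the paper gives no separate argument for this proposition, simply noting that the proof of Lemma~\ref{sdegfaqqfas} (Lawson semilattice structure plus \cite[Theorem 5.1]{KSY} and the straight-line path to $A_0\cup A_1$) extends to $\cld_H(\Rm)$. You supply exactly that extension, and the two points genuinely new in the unbounded/discrete setting --- density via maximal $\eps$-discrete subsets perturbed into $\Rm\setminus D$, and local finiteness guaranteeing that the path stays in $\Dee(\Rm)$ --- are handled correctly.
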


Now, we consider the subspaces $\En(\R)$, $\nwd(\R)$, $\perf(\R)$
 and $\cld(\R\setminus\Qyu)$ of $\cld_H(\R)$.
Similarly to $\bcl_H(\R)$,
 the following can be shown:

\begin{prop}\label{non-sep-Z_sig}
The sets $\cld(\R) \setminus \En(\R)$, $\cld(\R) \setminus \nwd(\R)$,
 $\cld(\R) \setminus \perf(\R)$ and $\cld(\R) \setminus \cld(\R\setminus\Qyu)$
 are \Zsig-sets in the space $\cld_H(\R)$.
\end{prop}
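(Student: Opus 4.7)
The plan is to apply a uniform two-step strategy to all four cases in parallel. Since each component of $\cld_H(\R)$ is an AR by \cite{KuSaY}, $\cld_H(\R)$ is an ANR; by Fact~\ref{Z-set}, a closed subset is then a \Z-set if and only if it is homotopy negligible, and homotopy negligibility is preserved under passage to subsets. It therefore suffices, for each of the four target sets, to show (a) that it is $F_\sigma$ in $\cld_H(\R)$, and (b) that its complement contains a homotopy dense subspace of $\cld_H(\R)$. The closed pieces of the $F_\sigma$-decomposition will then automatically be \Z-sets, so the whole set will be \Zsig.

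For step (a) I would show that each of $\En(\R)$, $\nwd(\R)$, $\perf(\R)$, $\cld(\R\setminus\Qyu)$ is $G_\delta$ in $\cld_H(\R)$. The arguments of Propositions~\ref{wetgivwet} and~\ref{oeihgwef} transfer verbatim from $\bcl_H$ to $\cld_H$ (boundedness of $A$ was never used), giving the $G_\delta$ representations of $\nwd(\R)$ and $\perf(\R)$. For $\En(\R)$, the representation
$$\En(\R) = \bigcap_{N,k\in\nat}\bigl\{A\in\cld(\R):\mu(A\cap[-N,N])<1/k\bigr\}$$
works because $A\mapsto\mu(A\cap[-N,N])$ is upper semicontinuous on $\cld_H(\R)$: a small $d_H$-perturbation of $A$ lies in $\clbal(A,\delta)$, and $\mu(\clbal(A,\delta)\cap[-N,N])$ decreases to $\mu(A\cap[-N,N])$ as $\delta\to 0$. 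For $\cld(\R\setminus\Qyu)$, I would adapt the Saint Raymond argument: write $\R\setminus\Qyu=\bigcap_m U_m$ with each $U_m$ open and dense in $\R$, and observe (using Baire category in the Polish set $A$) that $A\in\cld(\R\setminus\Qyu)$ if and only if $A\cap U_m$ is dense in $A$ for every $m$; this last condition is equivalent to
$$\forall q\in\Qyu,\ \forall n\in\nat,\quad q\notin A\ \text{or}\ A\cap U_m\cap(q-1/n,q+1/n)\nnempty,$$
which is a countable intersection of unions of two basic open conditions on $A$.

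For step (b) I would use two homotopies. First, the Banakh--Voytsitskyy result recalled just before Proposition~\ref{h-dense-D} says $\Dee(\R)$ is homotopy dense in $\cld_H(\R)$; since discrete closed sets are both nowhere dense and of Lebesgue measure zero, $\Dee(\R)\subs\nwd(\R)\cap\En(\R)$ handles the first two cases. Second, the homotopy $h(A,t)=\clbal(A,t)$ of Proposition~\ref{sdgeriphgpwo} extends from $\bcl_H$ to $\cld_H$ (the proof never used boundedness), and for every $t>0$ the set $\clbal(A,t)$ is a union of nondegenerate closed intervals: it has no isolated points and coincides with the $\R$-closure of its irrational part. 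Thus $\clbal(A,t)$ lies simultaneously in $\perf(\R)$ and, under the identification $B\mapsto\cl_\R B$, in $\cld(\R\setminus\Qyu)$, providing the homotopy dense subsets required for the remaining two cases.

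The main obstacle is the $G_\delta$ property of $\cld(\R\setminus\Qyu)$, since Proposition~\ref{owteepgsdgfa} is stated only for bounded closed sets and the naive condition ``$A$ contains an irrational near $q$'' is not open in $\cld_H(\R)$. The trick described above---replacing this condition by the open conditions ``$A$ meets $U_m\cap(q-1/n,q+1/n)$'' and recovering the original via Baire category in the Polish set $A$ as $m\to\infty$---is what makes the whole argument go through; the remaining verifications are either the routine upper semicontinuity for $\En(\R)$ or direct consequences of the homotopies described in step (b).
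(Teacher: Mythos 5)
Your proposal is correct and follows essentially the argument the paper intends: the paper omits the proof with the remark that it goes ``similarly to $\bcl_H(\R)$'', i.e.\ exactly your two-step scheme of showing each of $\En(\R)$, $\nwd(\R)$, $\perf(\R)$, $\cld(\R\setminus\Qyu)$ is $G_\delta$ in $\cld_H(\R)$ and contains a homotopy dense subspace ($\Dee(\R)$ for the first two, the sets $\clbal(A,t)$ for the last two), so that the $F_\sigma$ complements are countable unions of homotopy negligible closed sets, hence \Zsig-sets. Your adaptations at the points where the bounded-case proofs do not transfer verbatim --- upper semicontinuity of $A\mapsto\mu(A\cap[-N,N])$ for $\En(\R)$, the Baire-category (Saint Raymond style) argument for $\cld(\R\setminus\Qyu)$, and regularly closed neighborhoods replacing Cantor sets for $\perf(\R)$ --- are all sound.
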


Due to Negligibility Theorem (\cite{AHW}, \cite{Cut})
 if $M$ is an $\ell_2(2^{\aleph_0})$-manifold
 and $A$ is a \Zsig-set in $M$ then $M \setminus A \topiso M$.
Thus, combining Proposition \ref{non-sep-Z_sig}
 and Theorem \ref{non-sep-compon},
 we have the following:

\begin{wn}
Let\/ $\Ha$ be a nonseparable component of\/ $\cld_H(\R)$
 which does not contain\/ $\R$, $[0,+\infty)$, $(-\infty,0]$.
Then $\Ha \cap \En(\R)$, $\Ha \cap \nwd(\R)$,
 $\Ha \cap \perf(\R)$ and\/ $\Ha \cap \cld(\R\setminus\Qyu)$
 are homeomorphic to $\ell_2(2^{\aleph_0})$.
\end{wn}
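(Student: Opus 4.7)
The plan is to assemble three ingredients already in place: Theorem \ref{non-sep-compon}, Proposition \ref{non-sep-Z_sig}, and the Negligibility Theorem quoted immediately above the statement. First I would invoke Theorem \ref{non-sep-compon} to identify $\Ha$ with $\ell_2(2^{\aleph_0})$; in particular, $\Ha$ is an $\ell_2(2^{\aleph_0})$-manifold, which is the ambient object to which the Negligibility Theorem will be applied.

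Next I need to verify that $\Ha$ is open in $\cld_H(\R)$, so that $\Zsig$-sets of the ambient space restrict nicely to $\Ha$. Since the preceding theorem established that $\Vee$ is an $\ell_2(2^{\aleph_0})$-manifold, it is a locally connected ANR; and since $\Ha$ is a component of $\cld_H(\R)$ contained in the open set $\Vee$, a standard connectedness argument shows that $\Ha$ is also a component of $\Vee$, hence open in $\Vee$ and thus open in $\cld_H(\R)$. Given this openness, if $S \subs \cld_H(\R)$ is a $\Z$-set, then any sufficiently close approximation of $\id_\Ha$ by maps $\Ha \to \cld_H(\R)\setminus S$ (obtained by restricting an approximation of $\id_{\cld_H(\R)}$) automatically lands in $\Ha$, so $S \cap \Ha$ is a $\Z$-set in $\Ha$; taking countable unions, $\Zsig$-sets restrict to $\Zsig$-sets in $\Ha$.

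Finally, Proposition \ref{non-sep-Z_sig} tells us that each of the four sets
\[
\cld(\R) \setminus \En(\R),\ \cld(\R) \setminus \nwd(\R),\ \cld(\R) \setminus \perf(\R),\ \cld(\R) \setminus \cld(\R\setminus\Qyu)
\]
is a $\Zsig$-set in $\cld_H(\R)$. Intersecting with $\Ha$ and using the previous step, each of $\Ha \setminus \En(\R)$, $\Ha \setminus \nwd(\R)$, $\Ha \setminus \perf(\R)$, $\Ha \setminus \cld(\R\setminus\Qyu)$ is a $\Zsig$-set in the $\ell_2(2^{\aleph_0})$-manifold $\Ha$. The Negligibility Theorem then yields that the complement in $\Ha$ of any of these sets is homeomorphic to $\Ha$; but these complements are precisely $\Ha \cap \En(\R)$, $\Ha \cap \nwd(\R)$, $\Ha \cap \perf(\R)$, and $\Ha \cap \cld(\R\setminus\Qyu)$, all therefore homeomorphic to $\ell_2(2^{\aleph_0})$.

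The only step that requires any real care is the middle one — openness of $\Ha$ in $\cld_H(\R)$ together with the resulting preservation of $\Zsig$-sets under restriction. This is standard ANR bookkeeping, so I do not anticipate any serious obstacle; the genuinely hard work has already been absorbed into Theorem \ref{non-sep-compon} (the $\ell_2(2^{\aleph_0})$-manifold structure of $\Vee$) and into Proposition \ref{non-sep-Z_sig}.
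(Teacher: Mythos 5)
Your proposal is correct and follows essentially the same route as the paper: the corollary is obtained exactly by combining Theorem \ref{non-sep-compon}, Proposition \ref{non-sep-Z_sig} and the Negligibility Theorem, with the Negligibility Theorem applied to the manifold $\Ha$ itself. The openness of $\Ha$ and the restriction of \Zsig-sets to $\Ha$, which you spell out via $\Vee$, are left implicit in the paper but are exactly the standard bookkeeping it relies on.
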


\section{Open problems}

The following questions are left open.

\begin{question}
In case $m > 1$,
 under the only assumption that $D \subs \Rm$ is a dense $G_\delta$ set
 and $\Rm \setminus D$ is also dense in $\Rm$,
 is the pair $\pair{\bcl(\R^m)}{\bcl(D)}$
 homeomorphic to $\pair\Qmp\pseudointmp$?
In particular,
 is the pair $\pair{\bcl(\R^m)}{\bcl(\nu^m_{m-1})}$
 homeomorphic to $\pair\Qmp\pseudointmp$?
\end{question}

\begin{question}
Does Theorem \ref{non-sep-compon} hold
 even if $\Ha$ contains $\R$, $[0,\infty)$ or $(-\infty,0]$?
\end{question}

\begin{question}
For $m > 1$,
 is $\cld_H(\Rm) \setminus \bcl(\Rm)$ an $\ell_2(2^{\aleph_0})$-manifold?
\end{question}

\section{Appendix}

For the convenience of readers,
we give short and straightforward proofs of Propositions \ref{rthsdpio} and \ref{owteepgsdgfa}.

\begin{prop}[\ref{rthsdpio}]
If $\pair Xd$ is $\sigma$-compact
 then the space $\pair{\bcl(X)}{d_H}$ is $F_{\sigma\delta}$
 in its completion $\pair{\bcl(\til X)}{d_H}$.
\end{prop}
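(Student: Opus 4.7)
The plan is to recall that, under the identification $\bcl(X)\subs\bcl(\til X)$, membership in $\bcl(X)$ is equivalent to $A\cap X$ being dense in $A$, and then to localize this density condition using a countable base of $\til X$, so that it becomes a countable conjunction of $F_\sigma$ conditions in $\bcl_H(\til X)$.

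First I would fix a decomposition $X=\bigcup_\ntr K_n$ with each $K_n$ compact, and a countable base $\{W_m:m\in\omega\}$ of open sets in $\til X$ (which exists because $X$, and hence $\til X$, is separable). Each $W_m$ is $F_\sigma$ in $\til X$: write $W_m=\bigcup_{j\in\omega}L_{m,j}$ with $L_{m,j}=\{x\in\til X:\dist(x,\til X\setminus W_m)\goe 1/j\}$ closed in $\til X$. Consequently $W_m\cap X=\bigcup_{n,j}(K_n\cap L_{m,j})$, and each slice $K_n\cap L_{m,j}$ is compact in $\til X$.

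Next I would verify, by a straightforward argument using that $\{W_m\}$ is a base, that $A\cap X$ is dense in $A$ if and only if for every $m$, either $A\cap W_m=\emptyset$ or $A\cap X\cap W_m\nnempty$. Substituting the decomposition of $W_m\cap X$ above, this rewrites precisely as
$$\bcl(X)=\bigcap_m\Bigl[\bigl(\bcl(\til X)\setminus W_m^-\bigr)\;\cup\;\bigcup_{n,j}(K_n\cap L_{m,j})^-\Bigr].$$
Two standard observations finish the argument: $W_m^-$ is open in $\bcl_H(\til X)$ because $W_m$ is open, and $K^-$ is closed in $\bcl_H(\til X)$ for every compact $K\subs\til X$ — if $A_i\to A$ in Hausdorff metric and $x_i\in A_i\cap K$, compactness of $K$ yields a cluster point $x\in K$ which must lie in $A$ by Hausdorff convergence. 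Hence each bracketed set on the right is $F_\sigma$, and so $\bcl(X)$ is $F_{\sigma\delta}$ in $\bcl_H(\til X)$.

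The main obstacle, and the reason a one-step argument fails, is that one cannot directly use the cross-term $(K_n\cap W_m)^-$: the set $K_n\cap W_m$ is typically not compact, so $(K_n\cap W_m)^-$ is neither open nor closed in $\bcl_H(\til X)$. The key trick is precisely to exhaust each open set $W_m$ from within by the closed sets $L_{m,j}$, which makes $K_n\cap L_{m,j}$ compact and hence $(K_n\cap L_{m,j})^-$ a genuine closed set in the Hausdorff hyperspace; everything else is bookkeeping.
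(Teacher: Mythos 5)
Your proof is correct and takes essentially the same route as the paper's: both localize the density of $A\cap X$ in $A$ over a countable base of $\til X$, write the trace of each basic open set on $X$ as a countable union of compact sets, and then use that $U^-$ is open for open $U$ while $K^-$ is closed for compact $K$ (the paper states this dually, via openness of $(\til X\setminus K)^+$, and writes the De Morgan--complementary formula for $\bcl(\til X)\setminus\bcl(X)$ as a countable union of $G_\delta$ sets). The only cosmetic difference is that you produce the compact decomposition of $W_m\cap X$ by explicitly exhausting $W_m$ with the closed sets $L_{m,j}$ and intersecting with the $K_n$, whereas the paper simply observes that $U_n\cap X$ is an $F_\sigma$ subset of the $\sigma$-compact space $X$.
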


\begin{proof}
Fix a countable open base $\setof{U_n}{\ntr}$ for $\til X$. Since $U_n\cap X$ is $F_\sigma$, we have $U_n\cap X=\bigcup_{k\in\nat}K_k^n$, where each $K_k^n$ is compact. Observe that, by compactness, the sets $(\til X\setminus K_k^n)^+$ are open in the Hausdorff metric topology. We claim that
$$\bcl(\til X)\setminus \bcl(X)=\bigcup_{\ntr}\Bigl(U_n^-\cap\bigcap_{k\in\nat}(\til X\setminus K_k^n)^+\Bigr),$$
which shows that $\bcl(\til X)\setminus \bcl(X)$ is a countable union of $G_\delta$ sets. This is what we want to prove.

Assume $A\in \bcl(\til X)\setminus \bcl(X)$, that is, $A\ne \cl_{\til X}(A\cap X)$. Then there is $\ntr$ such that $U_n\cap A\nnempty$ and $U_n\cap A\cap X=\emptyset$, which means that $A\in U_n^-$ and $A\in (\til X\setminus K_k^n)^+$ for every $k\in \nat$. Conversely, if $A\in U_n^-\cap \bigcap_{k\in\nat}(\til X\setminus K_k^n)^+$ then $U_n\cap A\nnempty$ and $U_n\cap A\cap X=\emptyset$, so $A\ne\cl_{\til X}(A\cap X)$.
\end{proof}

\begin{prop}[\ref{owteepgsdgfa}]
If $\pair Xd$ is Polish then the space $\pair{\bcl(X)}{d_H}$ is $G_\delta$ in its completion $\pair{\bcl(\til X)}{d_H}$.
\end{prop}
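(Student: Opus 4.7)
The plan is to express $\bcl(X)$ as a countable intersection of $G_\delta$ subsets of $\bcl(\til X)$, using a carefully chosen $G_\delta$-representation of $X$ inside $\til X$ that encodes the Polish structure. Since $X$ is Polish, I fix a complete metric $\rho$ on $X$ compatible with the $d$-topology, replacing $\rho$ by $\rho+d$ if needed so that $\rho\geq d$. I then define
$$V_n = \{y \in \til X : \exists\,\delta>0,\ \diam_\rho(X\cap\bal(y,\delta))<1/n\},\quad \ntr.$$
Each $V_n$ is open in $\til X$, the sequence is decreasing, and $X=\bigcap_{\ntr} V_n$. The nontrivial inclusion $\bigcap V_n\subs X$ uses $\rho$-completeness: for $y\in\bigcap V_n$, choosing $x_n\in X\cap\bal(y,\delta_n)$ with $\delta_n\downarrow 0$ yields a $\rho$-Cauchy sequence (by the nested $\rho$-diameter bounds) whose $\rho$-limit in $X$ must coincide with the $d$-limit $y$.

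Next I would establish the characterization
$$\bcl(X)=\bigcap_{\ntr} A_n,\quad A_n=\{B\in\bcl(\til X):B\subs\cl_{\til X}(B\cap V_n)\}.$$
The forward inclusion is immediate: if $B\cap X$ is $d$-dense in $B$, then $B\cap V_n\sups B\cap X$ is also $d$-dense in $B$. The reverse direction is the main obstacle. Assuming $B\in\bigcap A_n$, for $y\in B$ and $\eps>0$ I would recursively build integers $N_k\to\infty$ and points $z_0=y,z_1,z_2,\ldots\in B$ with $z_{k+1}\in B\cap V_{N_{k+1}}$ (possible by density of $B\cap V_{N_{k+1}}$ in $B$) and $d(z_{k+1},z_k)$ shrinking geometrically relative to the $V_{N_k}$-constant $\delta_k$ at $z_k$. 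The $d$-Cauchy limit $z^*$ belongs to $B$ by closedness; the key calculation is that for every $n$, taking $k$ large enough so that $N_k\geq n$ and $d(z_k,z^*)<\delta_k$, one has $\bal(z^*,\delta_k-d(z_k,z^*))\subs\bal(z_k,\delta_k)$, so the $\rho$-diameter bound at $z_k$ passes to $z^*$, forcing $z^*\in V_n$. Hence $z^*\in\bigcap V_n=X$, and $z^*\in B\cap X$ lies within $\eps$ of $y$.

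Finally, each $A_n$ is $G_\delta$ in $\bcl(\til X)$. The complement $\bcl(\til X)\setminus A_n$ consists of those $B$ for which there exist $y\in B$ and $k$ with $B\cap V_n\cap\bal(y,1/k)=\emptyset$, which, using a countable dense set $\{y_i\}\subs\til X$ and rationals $r<r'$, unfolds to the countable union over $(i,r,r')$ of the sets
$$\{B:B\cap\bal(y_i,r)\nnempty\}\cap\{B:B\cap V_n\cap\bal(y_i,r')=\emptyset\}.$$
The first factor is open in $\bcl_H(\til X)$, while the second, being $\{B:B\subs\til X\setminus(V_n\cap\bal(y_i,r'))\}$, is closed since $V_n\cap\bal(y_i,r')$ is open. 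Each such intersection is $F_\sigma$, and so is the countable union, so $A_n$ is $G_\delta$ and therefore so is $\bcl(X)=\bigcap_n A_n$.
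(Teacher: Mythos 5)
Your argument is correct, but it takes a genuinely different route from the paper at the key step. The paper takes an \emph{arbitrary} $G_\delta$ representation $X=\bigcap_k W_k$ in $\til X$ together with a countable base $\{V_n\}$, writes the complement directly as $\bcl(\til X)\setminus\bcl(X)=\bigcup_{n,k}\bigl(V_n^-\setminus(V_n\cap W_k)^-\bigr)$, and proves the nontrivial inclusion by the Baire Category Theorem: for $A\in\bcl(\til X)$ with $A\ne\cl_{\til X}(A\cap X)$, the set $A\cap U$ is a Baire space (closed-in-complete, cut by an open set), so some $A\cap U\cap W_k$ fails to be dense, which yields the required pair $(n,k)$. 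You instead manufacture a \emph{specific} $G_\delta$ representation by the oscillation sets $V_n$ of a complete metric $\rho\geq d$ (this is the classical proof that a Polish space is $G_\delta$ in any completion), and then replace the Baire argument by an explicit recursive Cauchy-sequence construction inside $B$, using the nested $\rho$-diameter bounds to force the limit into $\bigcap_n V_n=X$; the final descriptive computation (unfolding density via a countable dense set and rational radii, and using that $U^-$ is open for open $U$) is essentially the same bookkeeping as the paper's. What each approach buys: yours is self-contained and avoids Baire category at the cost of carrying the auxiliary metric $\rho$ and the constant-chasing in the recursion (you should make explicit that the tail sums are kept strictly below each $\delta_k$, e.g.\ by the bound $d(z_{j+1},z_j)<2^{-(j-k+2)}\delta_k$ for all $k\loe j$); the paper's is shorter and more flexible, since once one invokes Baire, density of every $B\cap W_k$ in $B$ already forces density of $B\cap X$ for \emph{any} open sets $W_k$ with $\bigcap_k W_k=X$, so no special structure of the $G_\delta$ representation is needed.
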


\begin{proof}
Let $\setof{W_n}{\ntr}$ be a family of open subsets of $\til X$ such that $X=\bigcap_{\ntr}W_n$. Fix a countable open base $\setof{V_n}{\ntr}$ for $\til X$. We claim that
\begin{equation}
\bcl(\til X)\setminus \bcl(X)=\bigcup_{\ntr}\bigcup_{k\in\nat}\Bigl(V_n^-\setminus(V_n\cap W_k)^-\Bigr).\tag{$*$}
\end{equation}
As $V^-$ is open in the metric space $\pair{\bcl(\til X,d)}{d_H}$ whenever $V\subs\til X$ is open, it follows that $V_n^-$ is $F_\sigma$ and therefore the set on the right-hand side of ($*$) is $F_\sigma$ in $\bcl_H(\til X)$. It remains to prove ($*$).

If $A\in V_n^-\setminus(V_n\cap W_k)^-$ then we have $x\in V_n\cap A$. Since $V_n\cap (A\cap X)=\emptyset$, it follows that $x\notin \cl_{\til X}(A\cap X)$. Thus $A\notin \bcl(X)$.
Now assume $A\in \bcl(\til X)\setminus \bcl(X)$, that is, $A\ne\cl_{\til X}(A\cap X)$. Then there exists an open set $U\subs \til X$ such that $U\cap A\nnempty$ and $U\cap A\cap X=\emptyset$. Hence $\bigcap_{k\in\nat}A\cap U\cap W_k=\emptyset$. Note that $A\cap U$ is a Baire space because of the completeness of $\pair{\til X}d$. Thus, by the Baire Category Theorem, there exists $k\in\nat$ such that $A\cap U\cap W_k$ is not dense in $A\cap U$. Find a basic open set $V_n\subs U$ such that $V_n\cap A\nnempty$ and $V_n\cap A\cap W_k=\emptyset$. Then $A\in V_n^-\setminus(V_n\cap W_k)^-$.
\end{proof}

Let $\borel(X)$ denote the Borel field on a topological space $X$.
Given $\mathfrak H \subs \cld(X)$,
 the {\em Effros $\sigma$-algebra\/} $\effros(\mathfrak H)$ is
 the $\sigma$-algebra generated by
$$\setof{U^-\cap\mathfrak H}{\text{$U$ is open in $X$}}.$$
It is well known that $\effros(\exp(X)) = \borel(\exp(X))$
 for every separable metric space $X$
 (see \cite[Theorem 6.5.15]{Beer}).\footnote
	{$\effros(\cld(X)) = \borel(\cld_H(X))$
	for every totally bounded separable metric space $X$
	(cf.\ \cite[Hess' Theorem 6.5.14 with Theorem 3.2.3]{Beer}).}
Whenever $X$ is a separable metric space
 in which every bounded set is totally bounded,
 we can regard $\bcl_H(X) \subs \exp(\tilde{X})$
 by the identification as in \S\ref{borelclasses},
 where $\tilde{X}$ is the completion of $X$.
Then, we have not only $\effros(\bcl(X)) = \borel(\bcl_H(X))$
 but also $\effros(\mathfrak H) = \borel(\mathfrak H)$
 for $\mathfrak H \subs \bcl_H(X)$.
This implies that $\effros(\mathfrak H)$ is standard
 if $\mathfrak H$ is absolutely Borel (cf.\ \cite[12.B]{Ke}).
The results in \S\ref{borelclasses} provide such hyperspaces $\mathfrak H$.

In relation to the results above,
 we can prove the following:

\begin{prop}\label{weptjwpf}
Let $X=\pair Xd$ be an analytic metric space
 in which bounded sets are totally bounded.
Then, the space $\bcl_H(X)$ is analytic.
\end{prop}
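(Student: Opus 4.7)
The plan is to realise $\bcl_H(X)$ as an analytic subset of the Polish space $\exp(\til X)$. Since $X$ is analytic it is separable, so $\til X$ is a separable complete metric space. A short density argument transfers the hypothesis that bounded sets are totally bounded from $X$ to $\til X$; hence $\bcl(\til X) = \exp(\til X)$, and by Fact~\ref{complete-separable} this space is Polish. Via the usual identification $A \mapsto \cl_{\til X} A$ the hyperspace $\bcl_H(X)$ is embedded into $\exp(\til X)$ as $\setof{A \in \exp(\til X)}{A \cap X \text{ is dense in } A}$.

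Next, fix a countable open base $\setof{V_n}{\ntr}$ for $\til X$. The density condition on $A$ rewrites, basic-open-set by basic-open-set, as: for every $n$, either $A \cap V_n = \emptyset$ or $A \cap V_n \cap X \nnempty$. This yields the decomposition
$$\bcl(X) = \bigcap_{\ntr}\bigl((\til X \setminus V_n)^+ \cup (V_n \cap X)^-\bigr).$$
Each set $(\til X \setminus V_n)^+$ is closed in $\exp(\til X)$, being the complement of the open set $V_n^-$. Each set $V_n \cap X$ is open in the analytic space $X$, hence is itself analytic. So the problem reduces to the auxiliary claim that, for every analytic $E \subseteq \til X$, the set $E^- = \setof{A \in \exp(\til X)}{A \cap E \nnempty}$ is analytic in $\exp(\til X)$; once this is granted, $\bcl(X)$ becomes a countable intersection of analytic sets (each closed-or-analytic summand is analytic, and countable intersections of analytic sets are analytic), and is therefore analytic.

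The auxiliary claim is the heart of the plan, though it should be short. Writing $E = \img{\pi}{P}$ for some Polish space $P$ and continuous $\pi : P \to \til X$, I would form the set $F = \setof{\pair{A}{p} \in \exp(\til X) \times P}{\pi(p) \in A}$ and observe that $E^- = \pr_{\exp(\til X)}(F)$. The main (and essentially only) obstacle is to check that $F$ is closed in $\exp(\til X) \times P$; this in turn reduces to the standard fact that the membership relation $\setof{\pair{A}{x} \in \exp(\til X) \times \til X}{x \in A}$ is closed in the Hausdorff (equivalently, Vietoris) topology on compacta. With this in hand, $E^-$ is a continuous image of a Polish space, hence analytic, and the proof is complete.
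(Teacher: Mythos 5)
Your argument is correct, but it takes a genuinely different route from the paper's. Both proofs start from the same identification $\bcl(X)\subs\bcl(\til X)=\exp(\til X)$ and from the same basic characterization (namely $A\in\bcl(X)$ iff for every basic open $U$, $A\cap U\nnempty$ implies $A\cap U\cap X\nnempty$), but they diverge in how the analyticity of the ``hitting'' condition is established. You isolate the clean auxiliary lemma that $E^-$ is analytic in $\exp(\til X)$ for every analytic $E\subs\til X$, proved by projecting the closed set $\setof{\pair Ap}{\pi(p)\in A}$ along a Polish parameter space, and then finish using closure of analytic sets under continuous images, countable unions and countable intersections; all the steps you leave as ``standard'' (closedness of the membership relation in the Vietoris topology, transfer of total boundedness to $\til X$, analyticity of $V_n\cap X$ in $\til X$) are indeed routine. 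The paper instead unwinds the analyticity of $X$ explicitly: it takes a Suslin scheme $\setof{X_s}{s\in\nat^{<\nat}}$ of closed sets with $X=\bigcup_{f\in\nat^\nat}\bigcap_{\ntr}X_{f\rest n}$, fattens it to open sets $W_s=\bal(X_s,2^{-|s|})$, and proves the explicit formula
$$\bcl(X)=\bigcap_{k\in\nat}\bigcup_{f\in\nat^\nat}\bigcap_{\ntr}\Bigl((\bcl(\til X)\setminus U_k^-)\cup(U_k\cap W_{f\rest n})^-\Bigr),$$
whose verification uses compactness of $A\cap\cl U_k$ to pass from the infinite intersection $\bigcap_{\ntr}\cl W_{f\rest n}$ to a finite stage; analyticity then follows from closure of the analytic class under the Suslin operation and countable intersections. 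Your approach buys brevity and avoids the fattening-and-compactness bookkeeping, at the cost of invoking the projection characterization of analytic sets; the paper's buys a concrete defining formula built from open and closed subsets of $\exp(\til X)$, at the cost of the Suslin-scheme manipulation. Either is a complete proof of the proposition.
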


\begin{proof}
The completion $\pair{\til X}d$ of $\pair Xd$ is a Polish space in which closed bounded sets are compact. Then $\bcl_H(\til X,d)=\exp(\til X)$ is Polish. Fix a countable open base $\ciag U$ for $\til X$. Since $X$ is analytic, there exists a tree $\setof{X_s}{s\in\nat^{<\nat}}$ of closed subsets of $\til X$ such that $X=\bigcup_{f\in\nat^\nat}\bigcap_{\ntr}X_{f\rest n}$, which is the result of the Suslin operation on the family $\setof{X_s}{s\in\nat^{<\nat}}$ (e.g.\ see \cite[Lemma 11.7]{Jech}). We may assume that $X_s\sups X_t$ whenever $s\subs t$. Let $W_s=\bal(X_s,2^{-|s|})$, where $|s|$ denotes the length of the sequence $s$. Then $\cl W_s\sups \cl W_t$ whenever $s\subs t$. Moreover, $\bigcap_{\ntr}X_{f\rest n}=\bigcap_{\ntr}\cl W_{f\rest n}$ for each $f\in\nat^\nat$. We claim that
\begin{equation}
\bcl(X,d)=\bigcap_{k\in\nat}\bigcup_{f\in\nat^\nat}\bigcap_{\ntr}\Bigl((\bcl(\til X,d)\setminus U_k^-)\cup(U_k\cap W_{f\rest n})^-\Bigr),\tag{$\sharp$}
\end{equation}
where, as usual, we regard $\bcl(X,d)\subs\bcl(\til X,d)$, via the embedding $A\mapsto \cl_{\til X}A$. The above formula ($\sharp$) shows that $\bcl(X,d)$ can be obtained from $\bcl(\til X,d)$ by using the Suslin operation and countable intersection, which shows that it is analytic. It remains to prove ($\sharp$).

Fix $A\in\bcl(\til X,d)\setminus \bcl(X,d)$. Then $A\ne\cl(A\cap X)$ and hence there exists $k\in\nat$ such that $A\in U_k^-$ and $\cl U_k\cap A\cap X=\emptyset$. Then $A \notin \bcl(\til X,d)\setminus U_k^-$. For each $f\in\nat^\nat$, we have
$$A\cap\cl U_k\cap\bigcap_{\ntr}\cl W_{f\rest n}
=A\cap\cl U_k\cap\bigcap_{\ntr}X_{f\rest n}=\emptyset.$$
By compactness, there is $\ntr$ such that $A\cap\cl U_k\cap \cl W_{f\rest n}=\emptyset$, hence $A\notin(U_k\cap W_{f\rest n})^-$.

Now assume that $A\in\bcl(\til X,d)$ does not belong to the right-hand side of ($\sharp$), that is, there exists $k\in\nat$ such that $A\in U_k^-$ and for every $f\in\nat^\nat$ there is $\ntr$ with $A\notin(U_k\cap W_{f\rest n})^-$. In particular, $A\cap U_k\cap\bigcap_{\ntr}X_{f\rest n}=\emptyset$ for every $f\in\nat^\nat$ and consequently $U_k\cap A\cap X=\emptyset$. On the other hand, $A\cap U_k\nnempty$. Thus it follows that $A\ne \cl_{\til X}(A\cap X)$, which means $A\notin\bcl(X,d)$.
\end{proof}

\end{document}